\numberwithin{equation}{section}
\newcommand{\BOX}{\ensuremath\Box}
\newtheorem{theorem}{Theorem}[section]
\newtheorem{lemma}[theorem]{Lemma}
\newtheorem{proposition}[theorem]{Proposition}
{\theorembodyfont{\rmfamily}\newtheorem{remark}[theorem]{Remark}}
{\theorembodyfont{\rmfamily}}
\newcommand{\N}{\mathbb{N}}
\newcommand{\Z}{\mathbb{Z}}
\newcommand{\R}{\mathbb{R}}
\renewcommand{\S}{\mathbb{S}}
\newcommand{\T}{\mathbb{T}}
\newcommand{\dd}{\,{\rm d}}
\newcommand{\opsupp}{\operatorname{supp}}
\newcommand{\opint}{\operatorname{int}}
\newcommand{\opbog}{\operatorname{Bog}}
\newcommand{\opbogeps}{\operatorname{\mathcal{B}^\eps}}
\newcommand{\opdist}{\operatorname{dist}}
\newcommand{\eps}{\varepsilon}
\def\XXint#1#2#3{{\setbox0=\hbox{$#1{#2#3}{\int}$}
		\vcenter{\hbox{$#2#3$}}\kern-.5\wd0}}
\newenvironment{proof}{{\vskip\baselineskip\noindent\textbf{Proof:}}}
{\hspace*{.1pt}\hspace*{\fill}\BOX\vskip\baselineskip}
\newenvironment{proofx}[1]
{\vskip\baselineskip\noindent\textbf{Proof of {#1}:}}
{\hspace*{.1pt}\hspace*{\fill}\BOX\vskip\baselineskip}
\definecolor{darkgreen}{rgb}{0,0.5,0}
\definecolor{darkblue}{rgb}{0,0,0.7}
\definecolor{darkred}{rgb}{0.9,0.1,0.1}
\begin{document}

\title{Lagrangian controllability in perforated domains}

\author{
Mitsuo Higaki
\thanks{
Department of Mathematics, 
Graduate School of Science, 
Kobe University, 
1-1 Rokkodai, Nada-ku, Kobe 657-8501, Japan.
\textit{E-mail address:}\texttt{higaki@math.kobe-u.ac.jp}
}
\and
Jiajiang Liao
\thanks{
School of Mathematical Sciences, 
Beihang University, 
102206 Beijing, China.
\textit{E-mail address:}\texttt{jjliao@buaa.edu.cn}
}
\and
Franck Sueur
\thanks{
Department of Mathematics, 
Maison du nombre, 6 avenue de la Fonte, 
University of Luxembourg,  
L-4364 Esch-sur-Alzette, Luxembourg.
\textit{E-mail address:}\texttt{Franck.Sueur@uni.lu}
}
}
\date{}

    \maketitle

\begin{abstract} 
The question at stake in Lagrangian controllability is whether one can move a patch of fluid particles to a target location by means of remote action in a given time interval. In the last two decades, positive results have been obtained both for the incompressible Euler and Navier-Stokes equations. However, for the latter, the case where the fluid is contained within domains bounded by solid boundaries with the no-slip condition has not been addressed, with respect to the difficulty caused by viscous boundary layers. In this paper, we investigate the Lagrangian controllability of viscous incompressible fluid in perforated domains for which the fraction of volume occupied by the holes is sufficiently small. Moreover, we quantitatively distinguish situations depending on the parameters for holes (diameter and distance) and for fluid (size of the initial data). Our approach relies on recent results on homogenization for evolutionary problems and on weak-strong stability estimates in measure of flows, alongside classical results on Runge-type approximations for elliptic equations and on Cauchy-Kowalevsky-type theorems for equations with analytic coefficients. Here, homogenization refers to the vanishing viscosity limit outside a porous medium, where (after scaling in time) the Navier-Stokes equations are homogenized to the Euler or Darcy equations. Indeed, in the proof, we act on the Navier-Stokes equations by strong and fast forcing to leverage inviscid approximations, which is a standard technique in the theory of controllability.
\end{abstract}

    \tableofcontents

    \section{Introduction}
    \label{sec.intro}

A classical problem in the controllability of the partial differential equations (PDEs) is determining if it is feasible to navigate the state of a physical system described by a PDE to a desired or specified target state within an imparted time. For instance, if one considers the incompressible Euler or Navier-Stokes equations, the problem is to ask if the state of the fluid velocity field can be modified; see in particular \cite{Lio88,Cor93,Cor9596,Gla00,Cor07,CMS20}.

On the other hand, a more recent problem under consideration is the Lagrangian controllability. This problem seeks to determine whether it is feasible to navigate a patch of fluid particles to a target location within an imparted time by means of a remote action, typically imposed on a portion of the boundaries surrounding the fluid. In these last two decades, positive results for the Lagrangian controllability have been obtained both for the incompressible Euler and Navier-Stokes equations; see \cite{Hor08,GlaHor10,GlaHor12,GlaHor16,HorKav17,LSZ22a}.

Let us briefly summarize the known results for the Lagrangian controllability of the Navier-Stokes equations with macroscopic bodies with solid boundaries. For the case of Navier-slip conditions on the boundaries, substantial progress has been achieved in \cite{LSZ22a}; see also \cite{LSZ25} studying the MHD equations. However, for the case of the no-slip boundary condition, no results have been reported even in the analytic framework.

A significant challenge arising from the no-slip boundary condition is the considerable difficulty in estimating the impact of viscous boundary layers near boundaries. The standard method for addressing controllability, as in \cite{CMSZ19,CMS20,LSZ22a,LSZ22b}, involves time scaling to introduce small viscosity in the Navier-Stokes equations, thereby leveraging the controllability of the inviscid equations. The underlying idea is to act on the viscous equations by a strong and fast control to bring them closer to the inviscid counterparts. Therefore, the method requires studying the vanishing viscosity limit of the Navier-Stokes equations, a problem recognized as notably difficult when the no-slip conditions are imposed on macroscopic boundaries; see, for example, \cite{MaeMaz18}.

In this paper, we consider the Lagrangian controllability for non-analytic boundaries with the no-slip condition, focusing on the case when boundaries are small. In fact, the limit of vanishing viscosity has been studied outside a porous medium, that is, in a domain perforated by holes with no-slip condition whose proportion in the total fluid is sufficiently small relative to viscosity \cite{ILFNL09,LacMaz16,Hof23}. In these situations, the local Reynolds number is bounded and thus the viscous boundary layers are negligible, enabling us to perform a simple energy computation similar to that in \cite{Kato84} and to avoid the Prandtl boundary layers. A prototypical result \cite[Theorem 1]{ILFNL09} says that a Leray-Hopf weak solution of the Navier-Stokes equations with viscosity $\nu$ converges to the solution to the Euler equations in the limit $\nu\to0$ when the fluid domain is the exterior to a single body whose volume is sufficiently smaller than $\nu$. This result is extended by \cite{LacMaz16} quantitatively by introducing parameters for holes (diameter and distance) and for fluid (viscosity and size of the initial data), and by \cite{Hof23} to the critical and supercritical cases of parameters where the limit equations are shown to be the Euler-Brinkman and Darcy equations, respectively, contrasting with the Euler equations in the subcritical case. The quantitative estimates in \cite{LacMaz16,Hof23} are pivotal when considering the Lagrangian controllability in a perforated domain.

This paper is organized as follows. In Section \ref{sec.LC.E}, we present a problem of the Lagrangian controllability in partially perforated domains, and prove the main result Theorem \ref{thm.main.E}. In Section \ref{sec.LC.D}, we present a problem of the Lagrangian controllability in fully perforated domains, and prove the main result Theorem \ref{thm.main.D}. In Appendix \ref{appx.Defs}, we collect several notions from differential geometry. In Appendix \ref{appx.Flow}, we recall the notion of a flow associated with a Leray-type solution of the Navier-Stokes equations. The materials in Appendices \ref{appx.Defs}--\ref{appx.Flow} will be referenced throughout this paper.

    \section{Lagrangian controllability in partially perforated domains}
    \label{sec.LC.E}

Let us consider the 3d Navier-Stokes equations with viscosity $1$ and forcing $F^\eps$
\begin{equation}\label{eq.NS}
    \left\{
    \begin{array}{ll}
    \partial_t U^\eps - \Delta U^\eps + U^\eps\cdot\nabla U^\eps + \nabla P^\eps 
    = F^\eps &\mbox{in}\ (0,T)\times\Omega^\eps,\\
    \nabla\cdot U^\eps = 0 &\mbox{in}\ (0,T)\times\Omega^\eps,\\
    U^\eps = 0 &\mbox{on}\ (0,T)\times\partial\Omega^\eps,\\
    U^\eps = U^\eps_0 &\mbox{on}\ \{0\}\times\Omega^\eps.
    \end{array}\right.
\end{equation}
Assuming $\Omega^\eps$ to be a partially perforated domain, we are interested in the following Lagrangian controllability problem: move fluid particles that are initially in a polluted zone $P_0^\eps$ in a set $K$ toward a safe zone $P_1$ without pollution away from $K$ in some time $T_c>0$. The way to move the particles is by remote action: regard the term $F^\eps$ in \eqref{eq.NS} as a control forcing moving $P_0^\eps$ to $P_1$ and assume that $F^\eps$ is smooth and supported in a control zone $\Omega_c$. The control zone $\Omega_c$ is supposed to be away from both $K$ and $P_1$.

An intrinsic feature of this problem is that one cannot hope to move all the fluid particles, not even the full Lebesgue measure of the patch of particles. This occurs because the particles close to the no-slip boundaries are unable to escape from their, yet shrinking, connected component. Hence, we will focus on the goal of moving most of the particles.

The precise definitions of $\Omega^\eps, P_0^\eps, P_1$, and $\Omega_c$ are as follows. In this paper, we regard the $3$-torus $\T^3$ as the cube $[0,1]^3$ with identified sides. It is equivalent to regard $\T^3$ as the orbit space of $\R^3$ under the action of translations by $\Z^3$. Then we can equip $\T^3$ with a Riemannian metric $g$ for which the quotient mapping $\pi: \R^3 \to \T^3$ is a local isometry. This remark is useful when lifting up notions defined in subspaces of $\R^3$ to $\T^3$; see Appendix \ref{appx.Defs}.

\begin{itemize} 
\item
\textbf{Definition of a partially perforated domain $\Omega^\eps$} \\
Let $K$ be a closed set in $\T^3$. To avoid technicality, we assume that $K$ is a closed cube with length $0<L<1$. Take $\alpha>3/2$ and sufficiently large $N\in\N$, and decompose $K$ as 
$K = \bigcup_i \overline{\mathcal{Q}^\eps_i}$ with $N^3$ small open disjoint cubes $\mathcal{Q}^\eps_i$ with center $x^\eps_i$ and sidelength 
\begin{equation}\label{def.eps}
    \eps = \frac{L}{N}. 
\end{equation}
Choose a reference particle $\mathcal{T}$ from $\R^3$ such that $\mathcal{T}$ is homeomorphic to the closed three-dimensional unit ball, with smooth boundary, containing $0$ in its interior, and contained in $B_{1/8}(0)$. Then we consider $N^3$ particles $\mathcal{T}^\eps_i = x^\eps_i + \eps^\alpha \mathcal{T} \subset \mathcal{Q}^\eps_i$ and let 
\[
    \Omega^\eps = \T^3 \setminus \bigcup_i \mathcal{T}^\eps_i. 
\]
The set $\Omega^\eps$ obtained in this way is called a partially perforated domain.

\item
\textbf{Definition of a polluted zone $P_0^\eps$ and a safe zone $P_1$} \\
Let each of $P_0$ and $P_1$ be a Jordan domain in $\T^3$, that is, the interior of a Jordan surface in $\T^3$; see Appendix \ref{appx.Defs} for the definition of a Jordan surface and its properties. Let $\gamma_0$ and $\gamma_1$ denote the Jordan surfaces of $P_0$ and $P_1$, respectively. Assume that $\gamma_0$ and $\gamma_1$ are isotopic in $\T^3$ and surround the same volume. We then assume that $P_0 \subset K$ and that $P_1\cap K = \emptyset$. Finally, we set $P_0^\eps = P_0\cap \Omega^\eps$.

\item
\textbf{Definition of a control zone $\Omega_c$} \\
Let $\Omega_c\neq \emptyset$ be an open set in $\T^3$ such that $\overline{\Omega_c}\cap K = \overline{\Omega_c}\cap P_1 = \emptyset$. Notice that $\gamma_0$ and $\gamma_1$ above are isotopic in $\T^3\setminus\overline{\Omega_{c}}$. Since one can always pick a sufficiently small ball with a smooth boundary in $\Omega_c$, to avoid complicating descriptions, we may assume that $\Omega_c$ itself is a ball with a smooth boundary. 
\end{itemize}

\begin{figure}[H]
        \centering
        {\includegraphics[scale=0.86]{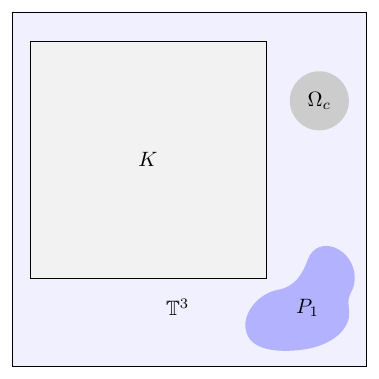}}
        \qquad   \qquad   
        {\includegraphics[scale=0.65]{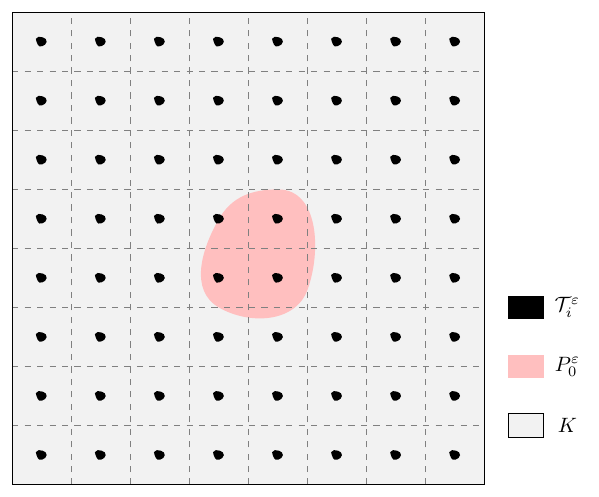}}
         \caption{on the left: $K$, $\Omega_c$ and $P_1$; on the right: zoom on $K$, with $P_0^\eps$ and $(\mathcal{T}^\eps_i)_i$}
\end{figure}

Then we consider the Leray-Hopf weak solutions of \eqref{eq.NS} which satisfy the energy inequality. The existence of a Leray-Hopf weak solution is elementary and can be proved by the Galerkin method, e.g., in Temam \cite[Section 3.1, Chapter $\mathrm{I}\hspace{-1.2pt}\mathrm{I}\hspace{-1.2pt}\mathrm{I}$]{Tem84book}.

Now let us state a result of the Lagrangian controllability for Leray-Hopf weak solutions of the Navier-Stokes equations \eqref{eq.NS}. Theorem \ref{thm.main.E} below says that in a nutshell, one can achieve the Lagrangian approximate controllability in a short time for initial data that are not necessarily small if the subparameter $\beta$ in the statement is suitably chosen. It is worth highlighting that Theorem \ref{thm.main.E} does not require the uniqueness of Leray-Hopf weak solutions; see also Remark \ref{rem.thm.main.E} (\ref{item2.rem.thm.main.E}).
Denote by $L^2_\sigma(\Omega^\eps)$ the subspace of $L^2(\Omega^\eps)^3$ whose elements are divergence-free in the sense of distributions and have a vanishing normal trace in $\partial\Omega^\eps$. Define 
\begin{equation}\label{def.pE}
    \mathfrak p_E 
    = \mathfrak p_E(\alpha,\beta)
    = \min\Big\{\alpha+\beta-3,\, \alpha-\frac{3}{2},\, \beta\Big\}.
\end{equation}

\begin{theorem}\label{thm.main.E}
Let $\alpha>3/2$ and choose $\beta>0$ so that $3-\alpha < \beta < \alpha$, which ensures that $\mathfrak p_E$ in \eqref{def.pE} is positive. For given $0<\eta<1$, we define
\begin{equation}\label{def.eps_E}
     \eps_0=\frac{L}{\lfloor \eta^{-2/\mathfrak{p}_E}\rfloor+1},
\end{equation}
where $0<L<1$ refers to the length of $K$, $\lfloor\,\cdot\,\rfloor$ the floor function. Then, for any $0 < \eps \le \eps_0$ of the form \eqref{def.eps} and initial data $U^\eps_0\in L^2_\sigma(\Omega^\eps)$ satisfying
\begin{equation}\label{est1.thm.main.E}
    \|U^\eps_0\|_{L^2(\Omega^\eps)}
    \le
    \eps^{\mathfrak p_E - \beta}, 
\end{equation}
the Lagrangian controllability of \eqref{eq.NS} holds within the time $\eps^\beta$: there exists a smooth control forcing $F^\eps$ supported in $\Omega_c$ with $\|F^\eps\|_{L^{\infty}_t H^s_x} \le C_0 \eps^{-2\beta}$ for any $s\in\N$ 
and a constant $C_0=C_0(s)>0$ depending on $s$
such that
\begin{equation}\label{est2.thm.main.E}
    \bigcup_{t\in [0,\eps^{\beta}]} \Phi^\eps(t,P_0^\eps) \subset\Omega^\eps\setminus\overline{\Omega_{c}}, \qquad
    \mathcal{L}(P_1 \setminus \Phi^\eps(\eps^\beta,P_0^\eps)) 
    \le C \eta, 
\end{equation}
where $\mathcal{L}$ refers to the Lebesgue measure and $\Phi^\eps$ the flow for a corresponding Leray-Hopf weak solution $U^\eps$ to \eqref{eq.NS}. The constant $C>0$ is independent of $\eta$ and $\eps$. 
\end{theorem}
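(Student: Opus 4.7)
The plan is to reduce the stated Lagrangian controllability to a vanishing-viscosity problem in the perforated domain, with effective parameter $\eps^\beta$, by rescaling time, and then to borrow an inviscid Euler control designed on the ambient torus $\T^3$. First I would set $s=t/\eps^\beta$ and rescale $u^\eps(s,x)=\eps^\beta U^\eps(\eps^\beta s,x)$ and $f^\eps(s,x)=\eps^{2\beta}F^\eps(\eps^\beta s,x)$. Under this change of variables $U^\eps$ solves \eqref{eq.NS} on $(0,\eps^\beta)\times\Omega^\eps$ if and only if $u^\eps$ is a Leray--Hopf solution on $(0,1)\times\Omega^\eps$ of the Navier--Stokes system with viscosity $\eps^\beta$, initial data satisfying $\|u^\eps_0\|_{L^2(\Omega^\eps)}\le\eps^{\mathfrak p_E}$, and smooth $O(1)$ forcing; moreover the Lagrangian flow of $U^\eps$ at time $\eps^\beta$ coincides with the Lagrangian flow of $u^\eps$ at time $1$.

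Next I would build, on the full torus ignoring the holes, a target Euler flow transporting $P_0$ to $P_1$. Since $\gamma_0$ and $\gamma_1$ are isotopic in $\T^3\setminus\overline{\Omega_c}$ and enclose the same volume, classical differential topology produces a smooth volume-preserving isotopy $\Psi\in C^\infty([0,1]\times\T^3;\T^3)$ with $\Psi(0,\cdot)=\mathrm{Id}$, $\Psi(1,\overline{P_0})=\overline{P_1}$ and $\Psi(s,\overline{P_0})$ at positive distance from $\overline{\Omega_c}$ uniformly in $s\in[0,1]$. Following the scheme of \cite{Hor08,GlaHor10,LSZ22a}, combining the Cauchy--Kowalevsky theorem for equations with analytic coefficients and Runge-type approximations for elliptic problems, I would then produce a smooth Euler solution $\bar u\in C^\infty([0,1]\times\T^3)$ with $\bar u|_{s=0}=0$ and smooth forcing $\bar f$ supported in $[0,1]\times\Omega_c$ whose Lagrangian flow $\bar\Phi$ coincides with $\Psi$ on $P_0$. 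Choosing $F^\eps(t,x)=\eps^{-2\beta}\bar f(t/\eps^\beta,x)$ then produces the required bound $\|F^\eps\|_{L^\infty_tH^s_x}\le C_0\eps^{-2\beta}$.

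The third step invokes the quantitative inviscid limit of \cite{LacMaz16} in the subcritical regime $\alpha>3/2$, for which the homogenized limit is the Euler system. Applied at viscosity $\eps^\beta$, with holes of diameter $\eps^\alpha$ spaced by $\eps$ and the smooth $O(1)$ target $\bar u$, it yields $\|u^\eps-\bar u\|_{L^\infty_s L^2(\Omega^\eps)}=O(\eps^{\mathfrak p_E})$; the three arguments of the minimum in \eqref{def.pE} correspond respectively to the porosity correction $\eps^{\alpha+\beta-3}$, the boundary-layer correction $\eps^{\alpha-3/2}$ driven by the hole diameter, and the residual viscous contribution $\eps^\beta$ coming from the smallness \eqref{est1.thm.main.E} of the initial data, and the formula \eqref{def.eps_E} precisely ensures that this rate is $O(\eta^2)$. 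I would then transfer the velocity convergence into flow convergence in measure by the weak--strong stability estimates recalled in Appendix \ref{appx.Flow}, producing a bad set $B^\eps\subset P_0$ of Lebesgue measure $\le C\eta$ such that the rescaled trajectory $s\mapsto\Phi^\eps(\eps^\beta s,x)$ is uniformly $C^0$-close to $\bar\Phi(s,x)$ for every $x\in P_0\setminus B^\eps$; combined with $\bar\Phi(1,P_0)=P_1$ and the uniform separation of $\bar\Phi(s,\overline{P_0})$ from $\overline{\Omega_c}$, this delivers both assertions in \eqref{est2.thm.main.E}.

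The heart of the argument, and the step I expect to be most delicate, is the third one: importing \cite{LacMaz16} at the rescaled viscosity $\eps^\beta$ and tracking precisely how the three exponents assemble into $\mathfrak p_E$, so that the gain in $\eps$ absorbs the $\eps^{-\beta}$ amplifications incurred when passing back to the unrescaled variables and when converting an $L^\infty_s L^2$ rate into a rate for flows via weak--strong stability. By contrast, the Euler construction of Step 2 is essentially a transcription of \cite{LSZ22a}, the only verification specific to the present geometry being the uniform separation of $\bar\Phi(s,\overline{P_0})$ from $\overline{\Omega_c}$.
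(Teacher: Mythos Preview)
Your overall strategy—rescale time to obtain a vanishing-viscosity Navier--Stokes system, build an Euler control on the torus via potential-flow/Runge arguments \`a la Glass--Horsin, prove a quantitative homogenization rate, and transfer it to flows by weak--strong stability—is exactly the paper's approach. A few points are worth correcting.

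First, the homogenization input is not \cite{LacMaz16} off the shelf but rather the corrector machinery of H\"ofer \cite{Hof23}, which the paper adapts to the \emph{partially} perforated setting (holes only in $K$): one must build correctors $(w^\eps,q^\eps)$ that equal $(\mathrm{Id},0)$ outside $K$ and re-derive the estimates of \cite[Lemmas 2.1--2.3]{Hof23} in this localized geometry. The three terms in $\mathfrak p_E$ emerge from an energy argument on the corrected difference $v^\eps=w^\eps u_E-\mathcal{B}^\eps(u_E)-u^\eps$; the $\eps^\beta$ is the viscous residual from $\eps^\beta\Delta u_E$, not an initial-data contribution (the data enter additively as $\|u_0^\eps\|_{L^2}$, which is then bounded by $\eps^{\mathfrak p_E}$ via \eqref{est1.thm.main.E}).

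Second, your Step 2 overstates what is obtained: the Euler construction delivers only approximate transport $\|\phi_E(1,\gamma_0)-\gamma_1\|_{C^k}<\eta$, not exact coincidence of $\bar\Phi$ with the isotopy $\Psi$ on $P_0$. This is harmless for the measure estimate but should be stated correctly.

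Third, and more substantively, your claim that weak--strong stability yields a \emph{single} bad set $B^\eps$ with uniform-in-time $C^0$ closeness on $P_0\setminus B^\eps$ does not follow from an $L^\infty_sL^2_x$ rate. The paper applies Gr\"onwall to get $\|\phi^\eps(t)-\phi_E(t)\|_{L^2(\Omega^\eps)}\lesssim\eps^{\mathfrak p_E}$ and then Markov at the \emph{single} time $t=1$, which suffices for the measure assertion in \eqref{est2.thm.main.E}. Your argument for the first inclusion $\bigcup_t\Phi^\eps(t,P_0^\eps)\subset\Omega^\eps\setminus\overline{\Omega_c}$ relies on the uniform-in-time pointwise control you do not have; trajectories starting in the bad set could in principle enter $\Omega_c$. (The paper's own proof in Section~\ref{sec.Scaling-Stability.E} is terse on this point as well, addressing only the measure estimate explicitly.)
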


\begin{figure}[H]
        \centering
        {\includegraphics[scale=0.7]{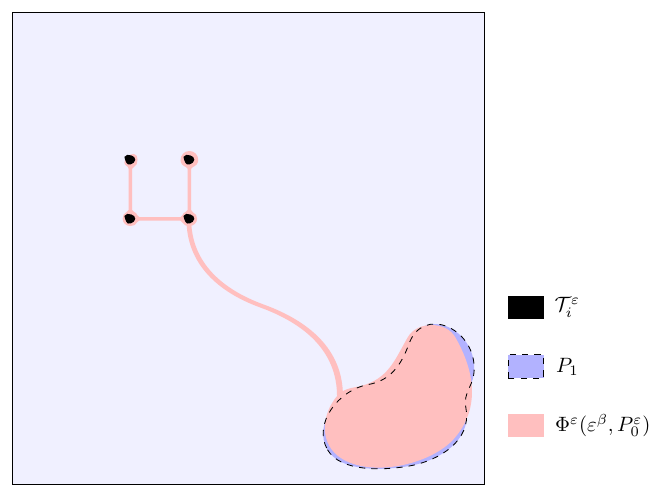}}
         \caption{position at time $\eps^\beta$ of the particles initially in $P_0^\eps$}
\end{figure}

\begin{remark}\label{rem.thm.main.E}
\begin{enumerate}[(i)]
\item\label{item2.rem.thm.main.E}
We emphasize the possibility that a Leray-Hopf weak solution $U^\eps$ in Theorem \ref{thm.main.E} may not be unique, since no smallness or structural conditions are imposed on the force $F^\eps$. However, the statement of Theorem \ref{thm.main.E} holds true regardless of the selection of the solutions. It should be noted that, for any selection of $U^\eps$, the flow $\Phi^\eps$ is uniquely determined almost everywhere due to the regularity of $U^\eps$; see Appendix \ref{appx.Flow}.

\item\label{item3.rem.thm.main.E}
We point out that the limit as $\eps$ goes to $0$ in Theorem \ref{thm.main.E} is a singular limit. This singularity is due to the scaling of $(U^\eps, P^\eps, F^\eps)$ in \eqref{eq.UPF_E} and not to the appearance of boundary layers near the boundaries of $\Omega^\eps$, triggered by large forcing $F^\eps$. In fact, boundary effects are limited due to the small volume occupied by the holes in $\Omega^\eps$, and can be treated after scaling \eqref{eq.UPF_E} by homogenization as in \cite{Hof23}.

\item\label{item5.rem.thm.main.E}
The question of ``the best choice" of the free parameter $\beta$ is interesting when optimizing the trio of cost (the size of control forcings), time, and the size of initial data. 
However, note that there should be different relevant choices of $\beta$ depending on which of the three characteristics we are focusing on.  
\end{enumerate}
\end{remark}

    \subsubsection*{Outlined proof of Theorem \ref{thm.main.E}.}

We outline the proof of Theorem \ref{thm.main.E} by dividing it into four steps. The order in which the contents are described in the steps differs from that of this paper. This is to highlight ideas in each step more conceptually, which may have an advantage for future extension to isolate the blocks of argument; any improvement such as higher-order asymptotics or norms in the homogenization in Step 2 should allow us to try other types of stability in Step 3.

\begin{enumerate}[Step 1.]
\item
\textbf{Scaling.} 
Assume that the control forcing $F^\eps$ is obtained. Motivated by the vanishing viscosity limit in $\Omega^\eps$ in Step 2 below, we perform a scaling in time 
\begin{equation}\label{eq.UPF_E}
\begin{split}
    U^\eps(t,x) &= \eps^{-\beta} u^\eps(\eps^{-\beta} t,x), \\
    P^\eps(t,x) &= \eps^{-2\beta} p^\eps(\eps^{-\beta} t,x), \\
    F^\eps(t,x) &= \eps^{-2\beta} f^{\eps}(\eps^{-\beta} t,x), 
\end{split}
\end{equation}
and consider \eqref{eq.NS_fc_E} instead of \eqref{eq.NS}, that is, the Navier-Stokes equations with viscosity $\eps^\beta$ instead of the ones with viscosity $1$. Since the flows $\Phi^\eps$ and $\phi^\eps$ for $U^\eps$ and $u^\eps$, respectively, are in relation \eqref{eq.FlowMaps_E}, it is advantageous to work on the Leray-Hopf weak solutions $u^\eps$ solving \eqref{eq.NS_fc_E} during the proof of Theorem \ref{thm.main.E}.

\item
\textbf{Homogenization.} 
We approximate Leray-Hopf weak solutions $u^\eps$ of \eqref{eq.NS_fc_E} by a solution $u_E$ of the Euler equations in the torus $\T^3$. Here, $u_E$ is constructed in Step 4 below. The proof of the convergence rate in $L^\infty_t L^2_x$ is inspired by quantitative analysis in H\"{o}fer \cite{Hof23}. However, unlike \cite{Hof23}, the perforation in $\Omega^\eps$ is localized in a subset $K$ in our problem. Thus, we construct new correctors in partially perforated domains $\Omega^\eps$ by revisiting \cite{Hof23} and Allaire \cite{All90a,All90b}. Then, following \cite[Proof of Theorem 1.2]{Hof23}, we obtain the convergence rate between $u^\eps$ and $u_E$ in $L^\infty_t L^2_x$.

\item 
\textbf{Stability.} 
By a weak-strong stability argument for flows, we convert the convergence rate between $u^\eps$ and $u_E$ obtained in Step 2 to the estimate of the difference between flows $\phi^\eps$ and $\phi_E$, where $\phi_E$ refers to the flow for $u_E$.

\item
\textbf{Limit equations.} 
We construct a smooth triplet $(u_E, p_E, f_E)$ solving the Euler equations in $\T^3$ such that the initial zone $P_0$ moves by $u_E$ to the zone $P_1$, up to an arbitrarily small error in measure. Moreover, we take $f^{\eps}$ by $f^{\eps} := f_E|_{\Omega^\eps}$ and define the control forcing $F^\eps$ through \eqref{eq.UPF_E}. Our construction of $(u_E, p_E, f_E)$ is inspired by Glass-Horsin \cite{GlaHor12} for the controllability of the 3d Euler equations in smooth bounded domains. We adapt the argument in \cite{GlaHor12} to the case of the torus $\T^3$. It is emphasized that the precise value or structure of $f_E$ is not important in the whole proof; only the uniformity of its bounds on its norm counts.
\end{enumerate}

\begin{remark}\label{rem.outline}
The corresponding sections in this paper are as follows: Section \ref{sec.Limit.E} for Step 4,  Section \ref{sec.Homogenization.E} for Step 2, and Section \ref{sec.Scaling-Stability.E} for Step 1 and Step 3. 
\end{remark}

Before proceeding with the proof of Theorem \ref{thm.main.E}, which is the Lagrangian controllability by an Euler homogenization, let us discuss the controllability by an Euler-Brinkman homogenization. The Euler-Brinkman equations in $\T^3$ are written as 
\begin{equation}\label{eq.EB}
    \left\{
    \begin{array}{ll}
    \partial_t u_{EB} + u_{EB}\cdot\nabla u_{EB} + \mathcal{R} \mathbbm{1}_{K} u_{EB} + \nabla p_{EB} = f_{EB} &\mbox{in}\ (0,1)\times\T^3,\\
    \nabla\cdot u_{EB} = 0 &\mbox{in}\ (0,1)\times\T^3,\\
    u_{EB}=0 &\mbox{on}\ \{0\}\times\T^3, 
    \end{array}\right.
\end{equation}
where $\mathcal{R}$ refers to the resistance matrix in \eqref{resis.mat} and $\mathbbm{1}_{K}$ to the indicator function of $K$ in which the holes are located. These equations appear as limit equations in Steps 2 and 4 when the parameters $\alpha,\beta$ satisfy $3/2 < \alpha < 3$ and $\beta = 3 - \alpha$; see \cite[Theorem 1.1]{Hof23}.

In Step 1 of the proof, one just needs to modify the scaling \eqref{eq.UPF_E} according to the homogenization process in Step 2. In Step 2, one can obtain the convergence rate between $u^\eps$ and $u_{EB}$ following \cite[Proof of Theorem 1.1]{Hof23}, that is, Proposition \ref{prop.RoC.E} with $u_E$ replaced by $u_{EB}$ and $p_E$ by the exponent $\mathfrak p_{EB} = \min\{\alpha-3/2, 3-\alpha\}$. Step 3 is the same as the one for Theorem \ref{thm.main.E}. Hence, new considerations are required only in Step 4, which are for the controllability of the Euler-Brinkman equations. However, even in the case where the matrix $ \mathcal{R}$ is a homothety, the presence of $\mathbbm{1}_{K}$ prevents us from making direct use of (forced) potential flows, as in Proposition \ref{prop.CoE} below. Note that, if one allows the time intervals for the controllability shorter in algebraic order of $\eps$ than that in Theorem 1.1, one may try another time and amplitude rescaling to \eqref{eq.EB}, such as in \eqref{eq.UPF_E}, to recast the system as a perturbation of the Euler equations by a lower-order perturbation due to the Brinkman term, to rely on the controllability by an Euler homogenization. We will not explore both directions further in this paper.

    \paragraph{Notation}

In the remainder of this section, we will adopt the following notation.
\begin{itemize}
\item
We write $A\lesssim B$ if there is some constant $C > 0$, called the implicit constant, such that $A\le CB$. We also write $A\approx B$ if there is some implicit constant $C\ge1$ such that $C^{-1}A\le B\le CA$. {\it Unless stated, any implicit constant $C$ is independent of $\eps$.}

\item 
For $x,y\in \T^3$, let us agree that $\opdist(x,y)$ denotes the distance between $x$ and $y$ if there is no ambiguity. For $x\in\T^3$ and $A,B\subset \T^3$, in the same manner, $\opdist(x,A)$ and $\opdist(A,B)$ denote the distances between $x$ and $A$, and $A$ and $B$, respectively.

\item 
For $A\subset \T^3$ and $\xi > 0$, as long as there is no ambiguity, we let 
\[
    \mathcal{V}_{\xi}[A] = \{x\in \T^3~|~ \opdist(x,A) < \xi\}.
\]
\end{itemize}

    \subsection{Controllability of Euler equations in torus}
    \label{sec.Limit.E}

In this section, we prove the remote controllability of the Euler equations in $\T^3$. For notions such as Jordan surface and isotopicity, we refer to Appendix \ref{appx.Defs}.

\begin{proposition}\label{prop.CoE}
Assume that Jordan surfaces $\gamma_0$ and $\gamma_1$ are isotopic in $\T^3\setminus\overline{\Omega_{c}}$ and surround the same volume. For any $\eta>0$ and $k\in\N$, there exist smooth $(u_E, p_E, f_E)$ and $d_0>0$ satisfying 
\begin{equation}\label{eq1.prop.CoE}
    \left\{
    \begin{array}{ll}
    \partial_t u_E + u_E\cdot\nabla u_E + \nabla p_E = f_E &\mbox{in}\ (0,1)\times\T^3,\\
    \nabla\cdot u_E = 0 &\mbox{in}\ (0,1)\times\T^3,\\
    u_E=0 &\mbox{on}\ \{0\}\times\T^3,
    \end{array}\right.
\end{equation}
and the conditions 
\begin{gather}
    \opsupp f_E(t,\cdot) \subset \overline{\Omega_c}, 
    \quad t\in(0,1); \label{item1.prop.CoE} \\
    \opdist(\phi_E(t,\gamma_0),\Omega_c)\geq d_0, 
    \quad t\in [0,1]; \label{item2.prop.CoE} \\
    \|\phi_E(1,\gamma_0)-\gamma_1\|_{C^k(\S^2)}<\eta,  \label{item3.prop.CoE}
\end{gather}
up to reparameterization. Here, $\phi_E$ denotes the flow of $u_E$. 
\end{proposition}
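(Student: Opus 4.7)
Following the strategy of Glass--Horsin \cite{GlaHor12} adapted to the torus, the plan is to construct $u_E$ as a potential flow $\nabla_x \theta(t,x)$ with $\theta(t,\cdot)$ harmonic on $\T^3 \setminus \overline{\Omega_c'}$ for some $\Omega_c' \Subset \Omega_c$. Such a field is automatically divergence-free outside $\overline{\Omega_c'}$, and the vector identity $\partial_t \nabla\theta + (\nabla\theta\cdot\nabla)\nabla\theta = \nabla\!\left(\partial_t\theta + \tfrac{1}{2}|\nabla\theta|^2\right)$ turns the momentum equation into a trivial identity once one sets $p_E := -\partial_t\theta - \tfrac{1}{2}|\nabla\theta|^2$ and asks for no force outside $\overline{\Omega_c}$. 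I would then extend $u_E$ smoothly into $\overline{\Omega_c}$ while preserving the divergence-free condition on $\T^3$ (by solving a Bogovskii-type problem to match the normal trace on $\partial\Omega_c$), extend $p_E$ smoothly, and absorb the residual $\partial_t u_E + u_E\cdot\nabla u_E + \nabla p_E$ into $f_E$; by construction $\opsupp f_E(t,\cdot) \subset \overline{\Omega_c}$, which is \eqref{item1.prop.CoE}.

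The core of the proof is therefore to produce such a $\theta$ whose gradient flow moves $\gamma_0$ to within $\eta$ of $\gamma_1$ in $C^k(\S^2)$. First, since $\gamma_0$ and $\gamma_1$ are isotopic in $\T^3\setminus\overline{\Omega_c}$ and surround the same volume, a standard Moser-type argument yields a smooth volume-preserving isotopy $(H_t)_{t\in[0,1]}$ of $\T^3$ equal to the identity on a neighborhood of $\overline{\Omega_c}$, with $H_0 = \mathrm{id}$ and $H_1(\gamma_0) = \gamma_1$ up to reparameterization. Its generator $Y_t := \partial_t H_t \circ H_t^{-1}$ is smooth, divergence-free, and vanishes near $\overline{\Omega_c}$, so that the trajectory tube $\mathcal{T} := \bigcup_{t\in[0,1]} H_t(\gamma_0)$ is compact and at positive distance from $\overline{\Omega_c}$. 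Because only the normal component of a vector field on the moving surface $H_t(\gamma_0)$ affects how this surface is transported, it is enough to approximate $Y_t$ by $\nabla\theta$ in the normal direction on the tube; any tangential discrepancy only contributes to a reparameterization of the image. In a tubular neighborhood $\mathcal{V}_\delta[\mathcal{T}]$ I would first produce a smooth (not necessarily harmonic) $\tilde\theta(t,x)$ with $\tilde\theta(0,\cdot)\equiv 0$ whose gradient matches the normal component of $Y_t$ on each slice $H_t(\gamma_0)$, obtained by writing the slice as a level set of a defining function and solving a one-dimensional transport equation along the normal variable.

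The harmonic approximation is then provided by a Runge--Walsh-type theorem in $\T^3\setminus\overline{\Omega_c'}$: because $\T^3\setminus\mathcal{V}_\delta[\mathcal{T}]$ is connected and contains $\overline{\Omega_c'}$, each slice $\tilde\theta(t,\cdot)$ admits, for every prescribed tolerance, an approximation in $C^{k+1}(\mathcal{V}_\delta[\mathcal{T}])$ by a function $\theta(t,\cdot)$ harmonic on $\T^3\setminus\overline{\Omega_c'}$, with smooth dependence in $t$ obtained by choosing the approximations continuously. The resulting $u_E = \nabla_x\theta$ satisfies $u_E(0,\cdot)\equiv 0$, after inserting a temporal cut-off vanishing at $t=0$ if necessary. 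Standard Gronwall-type continuous dependence of ODE flows on $C^k$ perturbations of the driving field then yields \eqref{item3.prop.CoE}, while \eqref{item2.prop.CoE} follows from the positive distance between $\mathcal{T}$ and $\overline{\Omega_c}$ combined with the smallness of the approximation error. The main obstacle is this last step: harmonic density is needed in $C^{k+1}$ (not merely in a uniform norm), and the preliminary $\tilde\theta$ must simultaneously realize the correct normal motion on every $H_t(\gamma_0)$ and be smooth in $t$, which requires a careful global choice of the defining function for the tube.
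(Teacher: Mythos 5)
Your overall strategy matches the paper's (and Glass--Horsin's): build $u_E$ as a potential flow $\nabla\theta$, use the isotopy/diffeotopy of Krygin type, employ a Runge-type approximation to relocate the singular support of $\Delta\theta$ into $\overline{\Omega_c}$, correct the divergence via a Bogovskii operator, and conclude by Gr\"onwall stability. However, there is a genuine gap in the central step, and you have slightly misdiagnosed where the obstacle actually lies.

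The problem is with the intermediate function $\tilde\theta$. You propose to build a \emph{smooth, not necessarily harmonic} $\tilde\theta(t,\cdot)$ whose gradient has the prescribed normal component on each slice $H_t(\gamma_0)$, obtained by a one-dimensional transport along the normal, and then to approximate $\tilde\theta(t,\cdot)$ in $C^{k+1}(\mathcal{V}_\delta[\mathcal{T}])$ by a function harmonic on $\T^3\setminus\overline{\Omega_c'}$. This Runge--Walsh step is where the argument breaks: harmonic functions are \emph{not} dense in $C^{k+1}$ (nor in $C^0$) among general smooth functions on an open set. Runge-type density holds only within the class of functions that are already harmonic on a neighborhood of the compactum; the $C^{k+1}$ upgrade is then automatic via Schauder estimates, so the topology is not the real issue. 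Since your $\tilde\theta$ is generically non-harmonic on the tube, no harmonic $\theta(t,\cdot)$ can approximate it, and the whole construction stalls.

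The paper avoids this by first reducing (via Whitney approximation and a stability argument) to the case where $\gamma_0,\gamma_1$ and the isotopy generator $X$ are real-analytic with $\nabla\cdot X=0$, and then, for each $t$, solving the interior Neumann problem $\Delta\psi=0$ in $\opint(\gamma(t))$ with $\partial_\nu\psi = X\cdot\nu$ on $\gamma(t)$ (solvable since $\int_{\gamma(t)}X\cdot\nu=0$), followed by a Cauchy--Kowalevsky extension across the analytic hypersurface $\gamma(t)$, together with unique continuation, to obtain a function $\psi(t,\cdot)$ that is genuinely \emph{harmonic on a full neighborhood} $\mathcal{V}_\xi[\opint(\gamma(t))]$ of the moving patch. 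Only then is the Runge theorem (Bagby--Blanchet in the paper's Lemma~\ref{lem.Runge.E}) applicable, yielding $\hat\psi_i$ with $\opsupp\Delta\hat\psi_i\subset\overline{\Omega_c}$, and the time dependence is handled by a finite covering of $[0,1]$ and a partition of unity rather than by a continuous selection. To repair your argument you need to replace the 1D normal-transport construction of $\tilde\theta$ by this Neumann-problem-plus-analytic-extension device; the analyticity reduction is essential, not optional, since Cauchy--Kowalevsky is what pushes the harmonicity of $\psi$ beyond $\gamma(t)$ into the open set where the Runge theorem operates.
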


In fact, this proposition is an adaptation of \cite[Theorem 1.5]{GlaHor12} stated for bounded domains in $\R^3$. Nevertheless, we will provide the proof with a certain degree of detail for the sake of completeness, emphasizing the differences due to our periodic setting.

In the proof, we need Lemma \ref{lem.PotFlows.E} below, whose proof is postponed for a moment.

\begin{lemma}\label{lem.PotFlows.E}
Assume that Jordan surfaces $\gamma_0$ and $\gamma_1$ are isotopic in $\T^3\setminus\overline{\Omega_{c}}$ and surround the same volume. For any $\eta>0$ and $k\in\N$, there exist smooth $\theta$ supported in $(0,1)\times\T^3$ and $d_0>0$ satisfying the conditions 
\begin{gather}
    \opsupp \Delta \theta(t,\cdot) \subset \overline{\Omega_c}, \quad t\in(0,1); \label{item1.lem.PotFlows.E} \\
    \opdist(\phi^{\nabla\theta}(t,\gamma_0),\Omega_c)\geq d_0, \quad t\in[0,1]; \label{item2.lem.PotFlows.E} \\
    \|\phi^{\nabla\theta}(1,\gamma_0)-\gamma_1\|_{C^k(\S^2)}<\eta, \label{item3.lem.PotFlows.E}
\end{gather}
up to reparameterization. Here $\phi^{\nabla \theta}$ denotes the flow of $\nabla\theta$. 
\end{lemma}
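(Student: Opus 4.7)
The plan is to adapt the construction of Glass--Horsin \cite{GlaHor12} to the periodic setting $\T^3$. The strategy has four steps: (i) build a smooth ambient isotopy of $\T^3$, supported away from $\overline{\Omega_c}$, that carries $\gamma_0$ onto $\gamma_1$ through surfaces lying in $\T^3\setminus\overline{\Omega_c}$; (ii) upgrade it to a volume-preserving isotopy via Moser's trick, producing a divergence-free time-dependent vector field $W_t$ whose flow moves $\gamma_0$ onto $\gamma_1$ at time $1$; (iii) for each $t$, replace $W_t$ on a tubular neighborhood of the moving surface $\Sigma_t$ by $\nabla\theta_t$ with $\theta_t$ harmonic on $\T^3\setminus\overline{\Omega_c}$, by means of a Runge-type density theorem for the Laplacian; (iv) smoothly cut off in time to guarantee that $\theta$ is supported in $(0,1)\times\T^3$ and appeal to ODE stability to conclude.

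For Step (i), the hypothesis of isotopy in $\T^3\setminus\overline{\Omega_c}$, together with the isotopy extension theorem, produces a smooth family $\Psi_t$ with $\Psi_0=\mathrm{id}$ and $\Psi_t\equiv\mathrm{id}$ on a neighborhood of $\overline{\Omega_c}$. For Step (ii), one first reparametrizes the isotopy (inserting if needed a preliminary radial correction supported away from $\overline{\Omega_c}\cup\gamma_0\cup\gamma_1$) so that the volumes of the enclosed domains $D_t$ remain equal to that of $D_0$; Moser's lemma then yields a family of volume-preserving diffeomorphisms $\widetilde\Psi_t$ of $\T^3$ realizing the same surface isotopy, and $W_t:=\partial_t\widetilde\Psi_t\circ\widetilde\Psi_t^{-1}$ is smooth, divergence-free, and vanishes near $\overline{\Omega_c}$. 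For Step (iii), since the evolution of a surface depends only on the normal trace of the driving field, it suffices to produce $\theta_t$ harmonic on $\T^3\setminus\overline{\Omega_c}$ whose normal trace $\partial_{\nu}\theta_t$ on $\Sigma_t$ is close to $g_t:=W_t\cdot\nu_{\Sigma_t}$ in a strong $C^k(\Sigma_t)$ norm. The compatibility $\int_{\Sigma_t}g_t\dd\sigma=0$ (from incompressibility of $W_t$) ensures that $g_t$ is realized as the Neumann trace of a function $\widetilde\theta_t$ harmonic in a tubular neighborhood of $\Sigma_t$; a Runge-type theorem for the Laplacian (as in Gauthier or Browder--Wermer) then approximates $\widetilde\theta_t$ by functions harmonic on all of $\T^3\setminus\overline{\Omega_c}$ in arbitrarily strong local norms, with the required topological hypothesis satisfied since $\overline{D_t}$ is a topological closed ball disjoint from $\overline{\Omega_c}$.

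The main obstacle is organizing the $t$-dependence. The Runge approximation must be uniform in $t\in[0,1]$ and accurate in a sufficiently high norm to guarantee $C^k(\S^2)$-closeness of the flow of $\nabla\theta_t$ to that of $W_t$ after integration over $[0,1]$. This is handled by partitioning $[0,1]$ into short subintervals on which the geometry of $\Sigma_t$ varies little, performing the approximation at one representative time per subinterval, and patching via the continuous dependence of flows on their driving vector fields. The required distance $d_0$ is then obtained as $d_0=\tfrac12\inf_{t\in[0,1]}\opdist(\Sigma_t,\Omega_c)>0$, which survives sufficiently small perturbations. Finally, the support-in-time constraint and the vanishing of $\theta$ at the endpoints $t=0,1$ are achieved by multiplying by a smooth temporal cutoff and absorbing the loss into a time reparametrization of the isotopy, at the cost of a harmless rescaling of the amplitude of $\nabla\theta$.
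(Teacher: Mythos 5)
Your Steps (i)--(ii) match what the paper gets from Krygin's theorem (Lemma~\ref{lem.KryGlaHor}), merely implemented via the isotopy-extension theorem plus Moser's trick, and the overall scheme of Steps (iii)--(iv) --- solve a Neumann problem on the moving interior, Runge-approximate by potentials with $\Delta\theta$ supported in $\overline{\Omega_c}$, patch over a finite subdivision of $[0,1]$, close with Gr\"onwall --- is also the paper's route. However, there is a genuine gap in Step (iii). You assert that the compatibility $\int_{\Sigma_t} g_t \, \dd\sigma = 0$ ensures $g_t$ is the Neumann trace of a function $\widetilde\theta_t$ harmonic in a (two-sided) tubular neighborhood of $\Sigma_t$. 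This is false in the smooth category: that compatibility condition only guarantees a solution of the Neumann problem \emph{inside} $D_t=\opint(\gamma(t))$, and such a solution does not extend harmonically across $\Sigma_t$ into the exterior unless $\Sigma_t$ and $g_t$ are real analytic (Cauchy--Kowalevsky, or Morrey). The extension to a full neighborhood is not a technical nicety --- it is precisely what makes the Runge step applicable: Lemma~\ref{lem.Runge.E}, together with the Schauder bootstrap from $L^\infty$ to $C^k$, requires the target function to be harmonic on a \emph{neighborhood} of the compact set $K_i = \overline{\mathcal{V}_{\xi/2}[\opint(\gamma(t_i))]}$, not merely on its interior. Without harmonicity beyond $\Sigma_t$ you cannot obtain $C^k$ closeness up to (and past) the surface, which is what feeds the Gr\"onwall estimate controlling $\phi^{\nabla\theta}$.

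The paper handles this by first reducing to the case where $\gamma_0,\gamma_1$ and the transporting divergence-free field $X$ are real analytic --- via Whitney's approximation theorem and stability of flows, as in \cite{GlaHor12} --- and then invoking the Cauchy--Kowalevsky-type extension of \cite[Lemma 2.4]{GlaHor12} to get $\psi(t,\cdot)$ harmonic on $\mathcal{V}_\xi[\opint(\gamma(t))]$, a full two-sided neighborhood. Your proposal omits this analytic reduction entirely, and with it Step (iii) as written does not go through. A related imprecision: matching only the normal trace $\partial_\nu\theta_t$ to $g_t$ in $C^k(\Sigma_t)$ is not enough to control the flow over a time interval; you need $\nabla\theta - \nabla\psi$ small in $C^k$ on a uniform neighborhood of all the intermediate surfaces, which is exactly what the neighborhood harmonicity and the partition-of-unity patching of the paper deliver. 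To repair the argument, insert the reduction to analytic $\gamma_0,\gamma_1,X$ and replace the appeal to the compatibility condition by the Cauchy--Kowalevsky extension argument.
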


\begin{proofx}{Proposition \ref{prop.CoE}}
Let $\eta>0$ and $k\in\N$ be given. Take $\theta$ in Lemma \ref{lem.PotFlows.E} and define
\[
    v_E = \nabla \theta, 
    \qquad
    q_E = -\partial_t\theta-\frac{1}{2}|\nabla\theta|^2,
    \qquad
    g_E = \Delta\theta.
\]
Then, since 
\[
    v_E\cdot\nabla v_E
    = \nabla \Big(\frac{1}{2}|\nabla\theta|^2\Big), 
\]
we see that $(v_E,q_E)$ satisfies the Euler equations with inhomogeneous divergence
\begin{equation}\label{eq1.prf.prop.CoE}
    \left\{
    \begin{array}{ll}
    \partial_t v_E + v_E\cdot\nabla v_E + \nabla q_E = 0 &\mbox{in}\ (0,1)\times\T^3,\\
    \nabla\cdot v_E = g_E &\mbox{in}\ (0,1)\times\T^3,\\
    v_E=0 &\mbox{on}\ \{0\}\times\T^3.
    \end{array}\right.
\end{equation}

We claim that there is a smooth vector field $w_E$ such that
\begin{equation}\label{eq2.prf.prop.CoE}
    \opsupp w_E \subset \overline{\Omega_c},
    \qquad
    \nabla\cdot w_E = g_E.
\end{equation}
For demonstrating the claim, it suffices to show that 
\begin{equation}\label{est1.prf.prop.CoE}
    \int_{\T^3} g_E = \int_{\Omega_c} g_E = 0.
\end{equation}
Indeed, a Bogovskii operator $\opbog=\opbog[\,\cdot\,]$ is applicable to $g_E$ and $w_E:=\opbog[g_E]$ meets \eqref{eq2.prf.prop.CoE}; see, e.g. \cite{BorSoh90} for the properties of $\opbog$. To prove \eqref{est1.prf.prop.CoE}, we use the structure $g_E = \Delta \theta = \nabla\cdot (\nabla \theta)$. Since $\opsupp \Delta \theta \subset \overline{\Omega_c}$, the Gauss divergence theorem gives 
\[
    0 
    = \int_{\T^3\setminus \overline{\Omega_c}} \Delta \theta 
    = -\int_{\partial \Omega_c} (\nabla \theta) \cdot n \dd \sigma, 
\]
where $n$ denotes the outward unit normal vector to $\partial \Omega_c$ and $\dd \sigma$ the surface measure of $\partial \Omega_c$. Then \eqref{est1.prf.prop.CoE} follows from another application of the Gauss divergence theorem 
\[
    \int_{\Omega_c} g_E 
    = \int_{\Omega_c} \Delta \theta 
    = \int_{\partial \Omega_c} (\nabla \theta) \cdot n \dd \sigma
    = 0.
\]

Now define $u_E = v_E - w_E$, $p_E = q_E$, and
\[
    f_E = -\partial_t w_E - v_E\cdot\nabla w_E - w_E\cdot\nabla v_E + w_E\cdot\nabla w_E.
\]
Then $(u_E,p_E,f_E)$ satisfies the properties in Proposition \ref{prop.CoE} thanks to \eqref{eq1.prf.prop.CoE}--\eqref{eq2.prf.prop.CoE}.
\end{proofx}

The rest of this section focuses on proving Lemma \ref{lem.PotFlows.E}. We apply the two lemmas below.

\begin{lemma}\label{lem.KryGlaHor}
Assume that Jordan surfaces $\gamma_0$ and $\gamma_1$ are isotopic in $\T^3\setminus\overline{\Omega_{c}}$ and surround the same volume. Then the following hold.
\begin{enumerate}[(1)]
\item\label{item1.lem.KryGlaHor}
There exists a volume-preserving diffeotopy $h\in C^{\infty}([0,1]\times\T^3;\T^3)$ such that $\partial_th $ is supported in $(0,1)\times\T^3$, $h(0,\gamma_0)=\gamma_0,h(1,\gamma_0)=\gamma_1$, and $\bigcup_t h(t,\gamma_0)\subset\T^3\setminus\overline{\Omega_{c}}$.

\item\label{item2.lem.KryGlaHor}
The vector field 
\[
    X(t,x) := \partial_t h(t,h^{-1}(t,x))
\]
is smooth and supported in $(0,1)\times\T^3\setminus\overline{\Omega_{c}}$. Moreover, $X$ satisfies $\nabla\cdot X = 0$ and $\phi^X(1,\gamma_0) = \gamma_1$. Here, $\phi^X$ denotes the flow of $X$.
\end{enumerate}
\end{lemma}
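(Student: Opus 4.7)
For (1), my plan is a classical two-stage construction in the spirit of Krygin's theorem on volume-preserving isotopies of hypersurfaces, adapted to the periodic setting with a prescribed exclusion region. First, I build a (not necessarily volume-preserving) ambient diffeotopy carrying $\gamma_0$ to $\gamma_1$ outside $\overline{\Omega_c}$; then I correct it to be volume-preserving by a Moser-type perturbation that does not disturb the support condition. Concretely, from the hypothesis there is a smooth isotopy of embeddings $\iota_t:\S^2\to\T^3\setminus\overline{\Omega_c}$ joining parametrisations of $\gamma_0$ and $\gamma_1$. The Thurston--Palais isotopy extension theorem lifts this to a smooth ambient diffeotopy $\tilde h(t,\cdot)$ of $\T^3$ with $\tilde h(0,\cdot)=\mathrm{Id}$, $\tilde h(t,\gamma_0)=\iota_t(\S^2)$, and $\tilde h(t,\cdot)=\mathrm{Id}$ on a fixed open neighbourhood $U$ of $\overline{\Omega_c}$; a smooth time reparametrisation $\tau(t)$ equal to $0$ near $t=0$ and to $1$ near $t=1$ then arranges $\partial_t\tilde h$ to vanish near the endpoints of $[0,1]$.

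The main obstacle is the volume-preserving correction. I would introduce the volume forms $\omega_t:=\tilde h(t,\cdot)^*\,\mathrm{d}V$, which coincide with $\mathrm{d}V$ on $U$ and satisfy $\int_{\T^3}\omega_t=\int_{\T^3}\mathrm{d}V$ for all $t$. Moser's trick then produces a smooth family $\psi_t$ of diffeomorphisms of $\T^3$ with $\psi_t^*\,\mathrm{d}V=\omega_t$, $\psi_0=\mathrm{Id}$, and $\psi_t=\mathrm{Id}$ on $U$ (the last property is obtained by choosing the Moser vector field through a primitive of $\omega_t-\mathrm{d}V$ supported away from $U$, which is possible since $\omega_t-\mathrm{d}V$ vanishes on $U$). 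The equal-enclosed-volume hypothesis $\mathcal{L}(P_0)=\mathcal{L}(P_1)$ enters at $t=1$: since
\begin{equation*}
\int_{P_1}\omega_1 \;=\; \mathcal{L}\bigl(\tilde h(1,\cdot)^{-1}(P_1)\bigr) \;=\; \mathcal{L}(P_0) \;=\; \mathcal{L}(P_1) \;=\; \int_{P_1}\mathrm{d}V,
\end{equation*}
a relative Moser argument (applied separately on $\overline{P_1}\setminus U$ and on $\T^3\setminus(P_1\cup U)$, then smoothed across $\gamma_1$) allows one to additionally require $\psi_1(\gamma_1)=\gamma_1$ setwise. Setting $h(t,\cdot):=\psi_t\circ\tilde h(t,\cdot)$ then yields a volume-preserving diffeotopy with $h|_U=\mathrm{Id}$ (so in particular $\bigcup_t h(t,\gamma_0)\subset\T^3\setminus\overline{\Omega_c}$) and $h(1,\gamma_0)=\psi_1(\gamma_1)=\gamma_1$; a final smooth time reparametrisation ensures $\partial_t h$ is supported in $(0,1)\times\T^3$, completing (1).

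For (2), the smoothness of $X$ follows from that of $h$ and of its inverse $h^{-1}$, which is smooth since $h(t,\cdot)$ is a diffeomorphism of $\T^3$ depending smoothly on $t$. Because $h|_U=\mathrm{Id}$, the inverse $h^{-1}(t,\cdot)$ fixes $U$ pointwise, and $\partial_t h$ vanishes both on $U$ and for $t$ near $0,1$; hence $X=(\partial_t h)\circ h^{-1}$ is supported in $(0,1)\times(\T^3\setminus\overline{\Omega_c})$. Differentiating the volume-preservation identity $\det Dh(t,x)\equiv 1$ in $t$ gives $(\nabla\cdot X)\circ h\equiv 0$ and thus $\nabla\cdot X\equiv 0$ on $\T^3$. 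Finally, by definition of $X$ one has $\partial_t h(t,x)=X(t,h(t,x))$ with $h(0,\cdot)=\mathrm{Id}$, so $h(t,\cdot)$ is precisely the flow $\phi^X(t,\cdot)$ of $X$, and in particular $\phi^X(1,\gamma_0)=h(1,\gamma_0)=\gamma_1$.
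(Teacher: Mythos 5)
The paper's proof of this lemma is essentially a citation: part (1) is attributed to Krygin \cite{Kry71} and part (2) to Glass--Horsin \cite[Section 2]{GlaHor12}, with the remark that the arguments for bounded domains $\Omega\subset\R^3$ carry over to $\T^3\setminus\overline{\Omega_c}$. You instead attempt a from-scratch proof (isotopy extension $+$ Moser), which is a legitimate route, and your part (2) -- smoothness, support, $\nabla\cdot X=0$ via differentiating $\det Dh\equiv1$, and $h=\phi^X$ from the defining ODE -- is correct.

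In part (1) there is, however, a concrete bookkeeping error in the Moser step. With $\omega_t:=\tilde h(t,\cdot)^*\,\mathrm{d}V$, the condition $\psi_t^*\,\mathrm{d}V=\omega_t$, and $h(t,\cdot)=\psi_t\circ\tilde h(t,\cdot)$, you get
\begin{equation*}
h(t,\cdot)^*\,\mathrm{d}V=\tilde h(t,\cdot)^*\bigl(\psi_t^*\,\mathrm{d}V\bigr)=\tilde h(t,\cdot)^*\tilde h(t,\cdot)^*\,\mathrm{d}V,
\end{equation*}
which is \emph{not} $\mathrm{d}V$; the correction actually doubles the volume distortion rather than cancelling it. There are two consistent ways to repair this: (i) redefine $\omega_t:=(\tilde h(t,\cdot)^{-1})^*\,\mathrm{d}V$ (the pushforward of $\mathrm{d}V$, which is what makes your subsequent identity $\int_{P_1}\omega_1=\mathcal{L}(\tilde h(1,\cdot)^{-1}(P_1))=\mathcal{L}(P_0)$ true), or (ii) keep $\omega_t=\tilde h(t,\cdot)^*\,\mathrm{d}V$ but compose the other way, $h(t,\cdot)=\tilde h(t,\cdot)\circ\psi_t$ with the classical Moser relation $\psi_t^*\omega_t=\mathrm{d}V$, in which case the surface to be preserved by $\psi_1$ is $\gamma_0$ and the balance is $\int_{P_0}(\omega_1-\mathrm{d}V)=\mathcal{L}(P_1)-\mathcal{L}(P_0)=0$. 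With either fix the remaining steps go through. You should also flag that the ``relative Moser applied on the two sides of $\gamma_1$ and smoothed across'' is a nontrivial gluing (one needs a version of Moser's theorem relative to a hypersurface, e.g., by solving a divergence equation with matching boundary flux on each side); the paper avoids this entirely by invoking Krygin's theorem, which packages precisely this volume-preserving diffeotopy statement.
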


\begin{proof}
The first result (\ref{item1.lem.KryGlaHor}) is due to Krygin \cite{Kry71} and the second (\ref{item2.lem.KryGlaHor}) to Glass-Horsin \cite[Section 2]{GlaHor12}. Although these results are presented for smooth bounded domains $\Omega \subset \R^3$ rather than those in $\T^3$, replacing $\Omega$ with $\T^3\setminus\overline{\Omega_{c}}$ leaves the proof essentially unaffected. 
\end{proof}

The following lemma shares similarity to \cite[Lemma A.1]{Gla03} applied in the study of controllability of the Vlasov–Poisson equations in a periodic domain.

\begin{lemma}\label{lem.Runge.E}
Let $K$ be a closed subset of $\T^3$ such that $K\cap \Omega_c=\emptyset$, and such that $\T^3\setminus K$ is connected. Then, for any harmonic function $\varphi$ in a neighborhood of $K$, $\eta>0$ and $k\in\N$, there exists a smooth $\psi$ on $\T^3$ such that $\opsupp \Delta \psi\subset \overline{\Omega_c}$ and $\|\varphi-\psi\|_{C^k(K)}<\eta$. 
\end{lemma}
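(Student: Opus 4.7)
The plan is to apply the standard Hahn--Banach duality argument that underlies Runge-type approximation theorems, adapted to the torus. First I would set $V \subset C^k(K)$ to be the subspace of restrictions to $K$ of smooth functions $\psi$ on $\T^3$ with $\opsupp \Delta \psi \subset \overline{\Omega_c}$, reducing the lemma to showing that $\varphi|_K$ lies in the $C^k(K)$-closure of $V$. By Hahn--Banach, it is enough to check that any continuous linear functional on $C^k(K)$---equivalently, any distribution $\mu$ on $\T^3$ of order at most $k$ with $\opsupp \mu \subset K$---that annihilates $V$ also annihilates $\varphi|_K$.

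Given such a $\mu$, the next step is to construct an auxiliary potential by solving $\Delta u = \mu$ on $\T^3$. This is solvable because the constant $\psi \equiv 1$ belongs to $V$, so the compatibility condition $\langle \mu, 1 \rangle = 0$ holds; this yields $u \in \mathcal{D}'(\T^3)$ unique up to additive constants. Since $\opsupp \mu \subset K$, elliptic regularity makes $u$ smooth and harmonic on the connected open set $\T^3 \setminus K$. The heart of the argument is then to show that $u$ is constant there. For every $\psi \in V$ one has $0 = \langle \mu, \psi \rangle = \langle u, \Delta \psi \rangle$ after integration by parts on $\T^3$. I would then observe that the range of $\psi \mapsto \Delta \psi$ on $V$ coincides exactly with the mean-zero smooth functions on $\T^3$ supported in $\overline{\Omega_c}$: the inclusion $\subset$ is immediate, and the reverse follows by solving a Poisson equation on $\T^3$ for any such right-hand side. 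Consequently $u|_{\Omega_c}$ is $L^2$-orthogonal to every mean-zero test function supported in $\Omega_c$, which forces $u = c$ on $\Omega_c$ for some constant $c$.

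The decisive step, and the one where the connectedness hypothesis enters essentially, is the unique continuation from $\Omega_c$ across the whole of $\T^3 \setminus K$: since $u$ is harmonic, hence real-analytic, on the connected open set $\T^3 \setminus K$ and agrees with the constant $c$ on the nonempty open subset $\Omega_c$, it must equal $c$ on all of $\T^3 \setminus K$. To conclude, I would pick a smooth extension $\tilde \varphi$ of $\varphi$ to $\T^3$ (using that $\varphi$ is defined on some open neighborhood $U$ of $K$); then $\langle \mu, \varphi \rangle = \langle \mu, \tilde \varphi \rangle = \langle u, \Delta \tilde \varphi \rangle$, and since $\Delta \tilde \varphi$ is smooth with support in $\T^3 \setminus U \subset \T^3 \setminus K$ where $u \equiv c$, this last pairing equals $c \int_{\T^3} \Delta \tilde \varphi = 0$. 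The most delicate point is the identification of the range of $\Delta$ on $V$ followed by the unique continuation step; the latter is precisely the role played by the hypothesis that $\T^3 \setminus K$ is connected, in direct analogy with the classical Runge theorem in $\R^n$.
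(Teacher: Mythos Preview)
Your proof is correct, but it takes a genuinely different route from the paper's. The paper's argument is a two-line application of an existing Runge theorem: it fixes a point $p\in\Omega_c$, invokes \cite[Theorem~9.2]{BagBla94} to produce a harmonic function $\tilde\psi$ on $\T^3\setminus\{p\}$ with $\|\varphi-\tilde\psi\|_{L^\infty(K)}<\eta$, upgrades this to a $C^k$ bound via interior Schauder estimates for the harmonic difference $\varphi-\tilde\psi$, and then multiplies by a cutoff that equals $1$ outside $\Omega_c$ and vanishes near $p$. In other words, the paper outsources the Runge step to the literature and only handles the localization of the singularity.

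Your argument, by contrast, reproves the Runge approximation from scratch via the Lax--Malgrange duality scheme: Hahn--Banach reduces the question to showing that every finite-order distribution $\mu$ supported in $K$ and annihilating $V$ also kills $\varphi|_K$; you then solve $\Delta u=\mu$ on $\T^3$, show $u$ is constant on $\Omega_c$ by identifying the range of $\Delta$ on $V$, and propagate this constant to all of $\T^3\setminus K$ by unique continuation. This is self-contained and has the pedagogical merit of exposing exactly where the connectedness of $\T^3\setminus K$ enters (the unique continuation step), whereas in the paper's proof that hypothesis is hidden inside the cited theorem. The paper's approach is considerably shorter and avoids the functional-analytic machinery, at the cost of relying on a black-box reference; yours is more transparent and would transfer verbatim to the variable-coefficient setting of Lemma~\ref{lem.Runge.D}, whereas the paper has to cite a different Runge theorem (Browder \cite{Bro61}) there.
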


\begin{proof}
Fix a point $p\in \Omega_c$ and take an open subset $\Omega_1$ of $\T^3$ such that $p\in \Omega_1$ and $\Omega_1 \Subset \Omega_c$. By the Runge theorem in Bagby-Blanchet \cite[Theorem 9.2]{BagBla94}, there exists a harmonic function $\tilde{\psi}$ defined on $\T^3 \setminus\{p\}$ such that $\|\varphi-\tilde{\psi}\|_{L^{\infty}(K)}<\eta$. Since $\varphi, \tilde{\psi}$ are harmonic in a neighborhood of $K$, according to Schauder estimates, it is possible to improve the topology of the approximation to $\|\varphi-\tilde{\psi}\|_{C^k(K)}<\eta$. Now take a smooth cut-off function $\chi$ on $\T^3$ such that $\chi=1$ on $\T^3\setminus \Omega_c$ and $\chi=0$ on $\Omega_1$. Then $\psi:=\chi \tilde{\psi}$ satisfies the desired properties. 
\end{proof}

We are now in a position to give the proof of Lemma \ref{lem.PotFlows.E}.

\begin{proofx}{Lemma \ref{lem.PotFlows.E}}
The proof is inspired by Glass-Horsin \cite[Proposition 2.2]{GlaHor12}. It suffices to prove the statement under the assumptions that $\gamma_0, \gamma_1$ are real analytic in $\T^3$. 
In addition, we can assume that there exists a real analytic vector field $X\in C^0([0,1]; C^\omega(\T^3)^3)$ such that $\nabla\cdot X = 0$ and the flow $\phi^X$ for $X$, which is a volume preserving real analytic isotopy between $\gamma_0$ and $\gamma_1$, satisfies $\phi^X(1,\gamma_0) = \gamma_1$ and, for some $d_0>0$, 
\begin{equation}\label{est0.prf.lem.PotFlows.E}
    \opdist(\phi^X(t,\gamma_0), \Omega_c)\ge 2d_0, 
    \quad 
    t\in[0,1].
\end{equation}
Indeed, then an approximation procedure as in \cite[Section 2]{GlaHor12} adopting Whitney’s approximation theorem \cite[Proposition 3.3.9]{KraPar2002book} and stability of flows, and Lemma \ref{lem.KryGlaHor} provide the conclusions of Lemma \ref{lem.PotFlows.E}. In the following, we take $\gamma_0, \gamma_1, X, \phi^X$ described as above.

Let $\eta>0$ and $k\in\N$ be given. Set
\[
    \gamma(t) = \phi^X(t,\gamma_0), 
    \quad 
    t\in[0,1]. 
\]
By \cite[Lemmas 2.4]{GlaHor12}, there exist $\xi>0$ and
\[
    \psi\in C
    \big(
    [0, 1]; C^\infty(\mathcal{V}_{\xi}[\opint(\gamma(t))])
    \big), 
\]
where we refer to Appendix \ref{appx.Defs} for the definition of $\opint(\gamma(t))$ and $\mathcal{V}_{\xi}[\opint(\gamma(t))]$ 
denotes the $\xi$-neighborhood of $\opint(\gamma(t))$ in $\T^3$, such that, for each $t\in[0,1]$, 
 \[
    \left\{
    \begin{array}{ll} 
    \Delta \psi = 0 &\mbox{in}\ \mathcal{V}_\xi[\opint(\gamma(t))], \\ [2pt] 
    \displaystyle{\frac{\partial \psi}{\partial \nu}} = X \cdot \nu &\mbox{on}\ \gamma(t). 
    \end{array}\right.
\]
Here $\nu$ is the unit outward normal vector field on $\gamma(t)$. The proof is based on the Cauchy-Kowalevsky theorem and unique continuation. Moreover, we may assume that 
\begin{equation}\label{est1.prf.lem.PotFlows.E}
    \opdist(\mathcal{V}_\xi[\opint(\gamma(t))],\Omega_c)\ge d_0, \quad t\in[0,1]     
\end{equation}
by choosing $\xi$ small enough if necessary.

Then, as in \cite[Proof of Proposition 2.3]{GlaHor12}, by compactness of $[0,1]$, there exist $0\le t_1<\cdots<t_N\le 1$ and $\delta_1,\ldots,\delta_N>0$ such that
\begin{gather}
    [0,1] \subset \bigcup_{i=1}^{N} (t_i - \delta_i,t_i + \delta_i); \label{est2.prf.lem.PotFlows.E} \\
    \gamma(t) \subset \mathcal{V}_{\xi/2}[\gamma(t_i)], 
    \quad 
    t\in[t_i - \delta_i,t_i + \delta_i]; \label{est3.prf.lem.PotFlows.E}\\
    \|\psi(s,\cdot) - \psi(t,\cdot)\|_{C^{k+2}(\overline{\mathcal{V}_{\xi}[\gamma(t_i)]})}<\eta, 
    \quad 
    s,t\in[t_i - \delta_i,t_i + \delta_i]. \label{est4.prf.lem.PotFlows.E}
\end{gather}
For each $i\in \{1,\ldots,N\}$, we set 
\[
    K_i = \overline{\mathcal{V}_{\xi/2}[\opint(\gamma(t_i))]}, 
    \qquad
    \psi_i(x) = \psi(t_i,x). 
\]
Notice that $K_i\cap \Omega_c=\emptyset$ by \eqref{est1.prf.lem.PotFlows.E}. Choosing $\xi$ small enough again if necessary, we may assume that $\T^3\setminus K_i$ is connected for each $i\in \{1,\ldots,N\}$. Then, for any $\tilde{\eta}>0$, applying Lemma \ref{lem.Runge.E} with $K=K_i$ and $\psi=\psi_i$, there exists smooth $\hat{\psi}_i$ defined on $\T^3$ such that
\begin{gather}
    \opsupp \Delta \hat{\psi}_i \subset \overline{\Omega_c}; \label{est5.prf.lem.PotFlows.E}\\
    \|\psi_i-\hat{\psi}_i\|_{C^{k+2}(K_i)}<\tilde{\eta}. \label{est6.prf.lem.PotFlows.E}
\end{gather}
Taking a partition of unity $\{\chi_i\}_{i=1}^{N}$ subordinate to the covering of $[0,1]$ in \eqref{est2.prf.lem.PotFlows.E}, we define 
\[
    \theta(t,x) = \sum_{i=i}^{N} \chi_i(t) \hat{\psi}_i(x). 
\]
Then $\theta$ satisfies \eqref{item1.lem.PotFlows.E} thanks to \eqref{est5.prf.lem.PotFlows.E}. We will show that $\theta$ satisfies \eqref{item2.lem.PotFlows.E}--\eqref{item3.lem.PotFlows.E}.

According to \eqref{est4.prf.lem.PotFlows.E} and \eqref{est6.prf.lem.PotFlows.E}, by choosing $\tilde{\eta}>0$ small depending on $\eta$, we have 
\begin{equation}\label{est7.prf.lem.PotFlows.E}
    \|\nabla\theta(t,\cdot) - \nabla\psi(t,\cdot)\|_{C^{k}(\overline{\mathcal{V}_{\xi/3}[\gamma(t)]})}
    \lesssim \eta, 
    \quad t\in[0,1].
\end{equation}
Let $\phi^{\nabla\theta}$ and $\phi^{\nabla\psi}$ denote the flows of $\nabla\theta$ and $\nabla\psi$, respectively. Note that, provided that $\phi^{\nabla\theta}(t,\gamma_0)$ remains in $\mathcal{V}_{\xi/3}[\gamma(t)]$, Gr\"{o}nwall's inequality leads to 
\begin{equation}\label{est8.prf.lem.PotFlows.E}
    \begin{split}
    &\|\phi^{\nabla\theta}(t,\gamma_0) - \phi^{\nabla\psi}(t,\gamma_0)\|_{C^0} \\
    &\le
    \|\nabla\theta(t,\cdot) - \nabla\psi(t,\cdot)\|_{C^0(\overline{\mathcal{V}_{\xi/3}[\gamma(t)]})}
    \exp \bigg(\int_{0}^{1}\|\nabla\psi(s,\cdot)\|_{C^{1}(\overline{\mathcal{V}_{\xi/3}[\gamma(t)]})}
    \dd s
    \bigg).
    \end{split}
\end{equation}
Using \eqref{est7.prf.lem.PotFlows.E} and replacing $\eta$ by smaller ones if needed, we verify that this inequality holds for all $t\in [0,1]$. Remark that the fact that $\eta$ and $\xi$ are independent is important here. In particular, it is ensured that $\phi^{\nabla\theta}(t,\gamma_0)$ remains in $\mathcal{V}_{\xi/3}[\gamma(t)]$ for all $t\in [0,1]$. Hence \eqref{item2.lem.PotFlows.E} now follows from \eqref{est1.prf.lem.PotFlows.E}. Finally, by differentiating $\phi$ with respect to spatial variables and applying Gr\"{o}nwall's inequality again, we see that, for all $t\in [0,1]$, 
\begin{equation}\label{est9.prf.lem.PotFlows.E}
    \begin{split}
    &\|\phi^{\nabla\theta}(t,\gamma_0) - \phi^{\nabla\psi}(t,\gamma_0)\|_{C^k} \\
    &\le
    \|\nabla\theta(t,\cdot) - \nabla\psi(t,\cdot)\|_{C^k(\overline{\mathcal{V}_{\xi/3}[\gamma(t)]})}
    \exp \bigg(\int_{0}^{1}\|\nabla\psi(s,\cdot)\|_{C^{k+1}(\overline{\mathcal{V}_{\xi/3}[\gamma(t)]})}
    \dd s
    \bigg), 
    \end{split}
\end{equation}
which implies \eqref{item3.lem.PotFlows.E} after replacement of $\eta$. This completes the proof of Lemma \ref{lem.PotFlows.E}. 
\end{proofx}

    \subsection{Homogenization to Euler equations}
    \label{sec.Homogenization.E}

The aim of this section is to prove Proposition \ref{prop.RoC.E} below, which provides the rate of convergence of solutions $u^\eps$ of the Navier-Stokes equations \eqref{eq.NS_fc_E} to the solution $u_E$ of the Euler equations \eqref{eq1.prop.CoE} in Proposition \ref{prop.CoE}. The proof is an adaptation of \cite[Proof of Theorem 1.2]{Hof23} to our setting where the holes of $\Omega^\eps$ are localized in the closed subset $K$.

    \subsubsection{Correctors in partially perforated domains}
    \label{sec.corr}

We introduce the corrector $w^\eps$ for the vector fields on a partially perforated domain $\Omega^\eps$ that have non-zero values on $\partial \Omega^\eps$. The estimates of $w^\eps$ in this section are needed in the proof of Proposition \ref{prop.RoC.E}. Here we follow the presentations mainly in H\"{o}fer \cite[Section 2]{Hof23} where it is assumed that the holes are distributed periodically in $\R^3$ and in Allaire \cite{All90a,All90b}.

Recall from Section \ref{sec.LC.E} that $\mathcal{T}\subset \R^3$ denotes a closed set such that $\mathcal{T}$ is homeomorphic to the three-dimensional closed unit ball, with smooth boundary, containing $0$ in its interior, and contained in $B_{1/8}(0)$. Then we consider the unique solutions $(w_k,q_k)\in \dot{H}^1(\mathbb{R}^3)\times L^2(\mathbb{R}^2)$ to the following steady exterior problem in $\R^3\setminus \mathcal{T}$ for $k=1,2,3$: 
\[
    \left\{
    \begin{array}{ll}
    -\Delta w_k + \nabla q_k = 0 &\mbox{in}\ \R^3\setminus \mathcal{T},\\
    \nabla\cdot w_k = 0 &\mbox{in}\ \R^3\setminus \mathcal{T},\\
    w_k = e_k &\mbox{on}\ \partial\mathcal{T}.
\end{array}\right.
\]
Define the resistance matrix $\mathcal{R} = (\mathcal{R}_{jk})\in\R^{3\times3}$ by 
\begin{equation}\label{resis.mat}
    \mathcal{R}_{jk}
    = \int_{\R^3\setminus \mathcal{T}}
    \nabla w_k \cdot \nabla w_j, 
\end{equation}
which is a positive definite symmetric matrix; see \cite[Proposition 1.1.2]{All90a}.

Let $\alpha>1$. Introducing an intermediate lengthscale 
\begin{equation}\label{lengthscale}
    \eps^\alpha \le \eta^\eps \le \eps, 
\end{equation}
we make the decomposition of $K = \bigcup_i \overline{\mathcal{Q}^\eps_i}$ finer by decomposing $\mathcal{Q}^\eps_i$ as
\[
    \begin{split}
    \mathcal{Q}^\eps_i 
    &= \mathcal{T}^\eps_i 
    \cup \mathcal{C}^\eps_i 
    \cup \mathcal{D}^\eps_i 
    \cup \mathcal{K}^\eps_i, \\
    \mathcal{C}^\eps_i 
    &:= B_{\eta^\eps/4}(x^\eps_i) \setminus \mathcal{T}^\eps_i, \\
    \mathcal{D}^\eps_i 
    &:= B_{\eta^\eps/2}(x^\eps_i) \setminus B_{\eta^\eps/4}(x^\eps_i), \\
    \mathcal{K}^\eps_i 
    &:= \mathcal{Q}^\eps_i \setminus B_{\eta^\eps/2}(x^\eps_i).
    \end{split}
\]
The reader may consult \cite[Figure 2]{Hof23} for visualization.

Define $(w^\eps_k,q_k^\eps)$ for $k=1,2,3$ by 
\[
    \begin{split}
    w^\eps_k = e_k - w_k\Big(\frac{x-x^\eps_i}{\eps^\alpha}\Big),
    \quad
    q_k^\eps = -\eps^\alpha q_k\Big(\frac{x-x^\eps_i}{\eps^\alpha}\Big)
    \quad &\text{in} \mkern9mu \mathcal{C}^\eps_i,\\
    -\Delta w^\eps_k + \nabla q_k^\eps = 0,
    \quad
    \nabla\cdot w^\eps_k = 0
    \quad &\text{in} \mkern9mu \mathcal{D}^\eps_i,\\
    w^\eps_k = e_k,
    \quad
    q_k^\eps = 0
    \quad &\text{in} \mkern9mu \mathcal{K}^\eps_i
    \end{split}
\]
and by $(w^\eps_k,q_k^\eps)=(e_k,0)$ outside $K$. It is emphasized that the boundary condition is imposed on $\partial \mathcal{D}^\eps_i$ which is induced by $w^\eps_k$ on $\mathcal{C}^\eps_i$ and $\mathcal{K}^\eps_i$. Then we define in $\Omega^\eps$
\begin{equation}\label{Corr}
    w^\eps = (w_1^\eps, w_2^\eps, w_3^\eps), 
    \qquad
    q^\eps = (q_1^\eps, q_2^\eps, q_3^\eps).
\end{equation}
One can check that 
$\nabla\cdot w^\eps_k=0$ for $k=1,2,3$, 
$w^\eps\in W^{1,\infty}_0(\Omega^\eps)^{3\times3}$, 
and $q^\eps\in L^{\infty}(\Omega^\eps)^{1\times3}$, 
and moreover, that the following pointwise estimates hold
\[
    \begin{split}
    |{\rm Id} - w^\eps(x)|
    \lesssim
    \frac{\eps^{\alpha}}{|x-x^\eps_i|}
    \quad &\text{in} \mkern9mu \mathcal{C}^\eps_i \cup \mathcal{D}^\eps_i,\\
    |\nabla w^\eps(x)| + |q^\eps(x)|
    \lesssim
    \frac{\eps^{\alpha}}{|x-x^\eps_i|^2}
    \quad &\text{in} \mkern9mu \mathcal{C}^\eps_i \cup \mathcal{D}^\eps_i, 
    \end{split}
\]
which in particular lead to
\[
    \|w^\eps\|_{L^\infty(\Omega^\eps)}
    + \eps^\alpha \big(\|\nabla w^\eps\|_{L^\infty(\Omega^\eps)} + \|q^\eps\|_{L^\infty(\Omega^\eps)}\big)
    \lesssim
    1.
\]
The proof is omitted since it is similar to \cite[Proof of Lemma 2.1 (i)]{Hof23} where holes are distributed periodically in $\R^3$. We extend $(w^\eps, q^\eps)$ to $\T^3$ by zero without changing notation.

In the following, we collect the estimates of $(w^\eps, q^\eps)$ needed in the subsequent sections. Lemma \ref{lem.Corr1} below corresponds to \cite[Lemma 2.2 (ii), (iii)]{Hof23}. The proof is omitted since it is almost identical to the corresponding ones in \cite{Hof23}. Note that $(w^\eps, q^\eps)=({\rm Id},0)$ in $\T^3\setminus K$.

\begin{lemma}\label{lem.Corr1}
The following hold. 
\begin{enumerate}[(1)]
\item\label{item1.lem.Corr1}
For $3/2<p<3$ and $\varphi\in W^{2,p}(\T^3)$,
\begin{equation} \label{ESTI1}
    \|({\rm Id} - w^\eps)\varphi\|_{L^p(\T^3)} 
    \lesssim
    (\eta^\eps)^{\frac{3}{p}-1} \eps^{\alpha-\frac{3}{p}}
    \|\varphi\|_{W^{2,p}(\T^3)}.
\end{equation}

For $\varphi\in W^{2,3}(\T^3)$,
\begin{equation} \label{ESTI2}
    \|({\rm Id} - w^\eps)\varphi\|_{L^3(\T^3)} 
    \lesssim
    \eps^{\alpha-1} |\log \eps|^\frac13
    \|\varphi\|_{W^{2,3}(\T^3)}.
\end{equation}

For $\varphi\in H^{2}(\T^3)$,
\begin{align}
    \||\nabla w^\eps| \varphi\|_{L^2(\T^3)} 
    + \||q^\eps| \varphi\|_{L^2(\T^3)}
    &\lesssim
    \eps^{\frac{\alpha-3}{2}}
    \|\varphi\|_{H^2(\T^3)},\\
    \||\nabla w^\eps|^{\frac12} \varphi\|_{L^2(\T^3)} 
    + \||q^\eps|^{\frac12} \varphi\|_{L^2(\T^3)}
    &\lesssim
    (\eta^\eps)^{\frac12}
    \eps^{\frac{\alpha-3}{2}}
    \|\varphi\|_{H^2(\T^3)}.
\end{align}

\item\label{item2.lem.Corr1}
For $\varphi\in H^1_0(\Omega^{\eps})$,
\begin{equation}
    \||\nabla w^\eps|^{\frac12} \varphi\|_{L^2(\Omega^{\eps})} 
    + \||q^\eps|^{\frac12} \varphi\|_{L^2(\Omega^{\eps})}
    \lesssim
    (\eta^\eps)^{\frac12}
    \|\nabla\varphi\|_{L^2(\Omega^{\eps})}.
\end{equation}
\end{enumerate}
\end{lemma}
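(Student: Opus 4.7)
The key geometric facts used throughout are that the supports of $\mathrm{Id}-w^\eps$, $\nabla w^\eps$ and $q^\eps$ lie in $\bigcup_i(B_{\eta^\eps/2}(x^\eps_i)\setminus\mathcal{T}^\eps_i)$ (with $w^\eps=\mathrm{Id}$, $q^\eps=0$ elsewhere), that the number of holes is $N^3=(L/\eps)^3\approx\eps^{-3}$, and that on each annular region $\mathcal{C}^\eps_i\cup\mathcal{D}^\eps_i$ the pointwise bounds $|\mathrm{Id}-w^\eps|\lesssim \eps^\alpha/|x-x^\eps_i|$ and $|\nabla w^\eps|+|q^\eps|\lesssim \eps^\alpha/|x-x^\eps_i|^2$ recalled right before the lemma apply, supplemented by the uniform $L^\infty$ saturation at scale $\eps^{-\alpha}$ near the obstacle. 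Every estimate is therefore reduced to a single-cell integration in spherical coordinates followed by summation over $i$, modulated by the embeddings available for the test function $\varphi$.

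For part (1), the plan is to bound $\varphi$ uniformly via the Sobolev embedding $W^{2,p}(\T^3)\hookrightarrow L^\infty(\T^3)$, which is available in each of the three sub-cases since $2p>3$. After factoring $\|\varphi\|_{L^\infty}\lesssim \|\varphi\|_{W^{2,p}}$ out of the integrals, the task reduces to computing the bare norms $\|\mathrm{Id}-w^\eps\|_{L^p(\T^3)}$, $\|\nabla w^\eps\|_{L^2(\T^3)}$, $\|q^\eps\|_{L^2(\T^3)}$, $\|\nabla w^\eps\|_{L^1(\T^3)}^{1/2}$ and the analogue for $q^\eps$. Each reduces, via the pointwise bounds, to elementary radial integrals of the form $\int_{\eps^\alpha}^{\eta^\eps}r^{n}\,dr$: multiplying the single-cell result by $N^3$ gives exactly the powers of $\eps$ and $\eta^\eps$ claimed in \eqref{ESTI1}, with the borderline case $p=3$ producing the logarithm in \eqref{ESTI2}.

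Part (2) is the delicate case, because $\varphi\in H^1_0(\Omega^\eps)$ cannot be pulled out in $L^\infty$. Instead, one exploits the vanishing trace $\varphi|_{\partial\mathcal{T}^\eps_i}=0$. Starting from
\[
\int_{\Omega^\eps}|\nabla w^\eps||\varphi|^2\,dx\;\lesssim\;\eps^\alpha\sum_{i}\int_{(\mathcal{C}^\eps_i\cup\mathcal{D}^\eps_i)\cap\Omega^\eps}\frac{|\varphi|^2}{|x-x^\eps_i|^2}\,dx,
\]
each inner integral is estimated by a localized Hardy-type inequality in the exterior of the obstacle $\mathcal{T}^\eps_i$, whose constant is independent of $\eps$ by a scaling argument against the fixed shape $\mathcal{T}$. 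A smooth cutoff $\chi_i\in C^\infty_c(B_{\eta^\eps/2}(x^\eps_i))$ with $|\nabla\chi_i|\lesssim 1/\eta^\eps$ reduces matters to a compactly supported Hardy bound, and a scaled Poincaré inequality (whose constant is governed by the $3$-capacity of $\mathcal{T}^\eps_i$ inside its cube) controls the cutoff commutator. After multiplication by the prefactor $\eps^\alpha$, these contributions combine to produce the clean per-cell bound $\eta^\eps\int_{\mathcal{Q}^\eps_i}|\nabla\varphi|^2\,dx$; summation over the disjoint cubes $\mathcal{Q}^\eps_i\supset B_{\eta^\eps/2}(x^\eps_i)$—legitimate since $\eta^\eps\leq\eps$—then yields the desired global bound, and the estimate for $q^\eps$ is strictly parallel. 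The main technical obstacle is precisely the balance of these Hardy and Poincaré inputs to produce the sharp prefactor $(\eta^\eps)^{1/2}$; the plan is to transplant the careful bookkeeping of \cite[Section~2]{Hof23} (originally carried out in the periodic setting on $\R^3$) to the present localized perforation of $\T^3$, observing that no additional work is needed outside $K$, where $w^\eps\equiv\mathrm{Id}$ and $q^\eps\equiv 0$.
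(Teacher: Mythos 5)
The paper does not prove Lemma~\ref{lem.Corr1} itself: it explicitly defers to the corresponding estimates in H\"ofer's work, so your write-up is, relative to this paper, a genuine reconstruction. Part~(1) is essentially correct. Pulling $\|\varphi\|_{L^\infty}\lesssim\|\varphi\|_{W^{2,p}}$ out (valid throughout since $2p>3$) and then computing per-cell radial integrals of $|{\rm Id}-w^\eps|$, $|\nabla w^\eps|$, $|q^\eps|$ with the pointwise bounds $\eps^\alpha/|x-x^\eps_i|$, $\eps^\alpha/|x-x^\eps_i|^2$, followed by multiplication by $N^3\approx\eps^{-3}$, indeed reproduces every exponent in \eqref{ESTI1}--\eqref{ESTI2} and the two $H^2$ estimates (the $p=3$ endpoint produces the $|\log\eps|^{1/3}$ factor). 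One inaccuracy worth noting: the support of ${\rm Id}-w^\eps$ (after the zero extension of $w^\eps$ to $\T^3$) is $\bigcup_i B_{\eta^\eps/2}(x^\eps_i)$, including the holes $\mathcal{T}^\eps_i$ where $w^\eps=0$; that extra contribution is of size $\approx\eps^{3(\alpha-1)/p}$ and is harmlessly absorbed since $\eta^\eps\ge\eps^\alpha$, but the statement that the support lies in $\bigcup_i(B_{\eta^\eps/2}(x^\eps_i)\setminus\mathcal{T}^\eps_i)$ is only true for $\nabla w^\eps$ and $q^\eps$.

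For Part~(2) the high-level plan (reduce to $\eps^\alpha\sum_i\int\frac{|\varphi|^2}{|x-x^\eps_i|^2}$, cut off, Hardy, then a hole-Poincar\'e to absorb the commutator) is the right one, but the key quantitative step is not pinned down and, taken at face value, fails. First, the cut-off should satisfy $\chi_i\equiv 1$ on $B_{\eta^\eps/2}(x^\eps_i)$ (the support of $\nabla w^\eps$ in the cell) and be supported in, say, $B_{\eta^\eps}(x^\eps_i)$ with $|\nabla\chi_i|\lesssim(\eta^\eps)^{-1}$; a function in $C^\infty_c(B_{\eta^\eps/2})$ with that gradient bound cannot equal $1$ on all of $B_{\eta^\eps/2}$. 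More importantly, the commutator term is $(\eta^\eps)^{-2}\int_{\opsupp\nabla\chi_i}|\varphi|^2$, and the desired final factor is $\eta^\eps$, so you need a hole-Poincar\'e inequality with constant $\lesssim(\eta^\eps)^3/\eps^\alpha$. The Poincar\'e ``inside its cube'' $\mathcal{Q}^\eps_i$, which your phrasing suggests, has constant $\approx\eps^3/\eps^\alpha$; that yields the per-cell bound $\eps^\alpha\bigl(1+(\eta^\eps)^{-2}\eps^{3-\alpha}\bigr)=(\eta^\eps)^{-2}\eps^3+\eps^\alpha$, and since $\eta^\eps\le\eps$ we have $(\eta^\eps)^{-2}\eps^3\geq\eta^\eps$, which is strictly worse than claimed whenever $\eta^\eps$ is chosen below $\eps$ (as it is in the application of the lemma, where $\eta^\eps\simeq o(\eps^{\max\{\beta,1\}})$). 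The fix is to apply the Poincar\'e inequality on $B_{\eta^\eps}(x^\eps_i)$, with Dirichlet data only on $\partial\mathcal{T}^\eps_i$; a scaling/capacity argument (or a one-dimensional fundamental-theorem-of-calculus computation along rays from $x^\eps_i$, using $\varphi=0$ on $\partial\mathcal{T}^\eps_i$) gives the constant $\approx(\eta^\eps)^3/\eps^\alpha$, and then $\eps^\alpha\cdot(\eta^\eps)^{-2}\cdot(\eta^\eps)^3/\eps^\alpha=\eta^\eps$ as required. With that correction the argument closes, and the $q^\eps$ part is indeed parallel.
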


Lemma \ref{lem.Corr2} below corresponds to \cite[Lemma 2.2]{Hof23}.

\begin{lemma}\label{lem.Corr2}
There exists a matrix-valued distribution $M^\eps\in W^{-1,\infty}(\Omega^\eps)^{3\times3}$ supported in $K$ such that the following hold.
\begin{enumerate}[(1)]
\item\label{item1.lem.Corr2}
For $V^\eps\in W^{1,\infty}_0(\Omega^\eps)^{3\times3}$, 
\begin{equation}\label{est.item1.lem.Corr2}
    \langle
    -\Delta w^\eps + \nabla q^\eps, V^\eps
    \rangle_{L^2(\Omega^\eps)}
    = 
    \langle
    \eps^{\alpha-3} M^\eps, V^\eps
    \rangle_{L^2(\Omega^\eps)}.
\end{equation}

\item\label{item2.lem.Corr2}
For $\varphi\in H^3(\T^3)^3$ and $\psi\in H^1(\T^3)^3$, 
\begin{equation}\label{est.item2.lem.Corr2}
    \begin{split}
    &|\langle
    (M^\eps - \mathcal{R} \mathbbm{1}_{K})\varphi, \psi
    \rangle_{L^2(\Omega^\eps)}| \\
    &\lesssim 
    \big(
    (\eta^\eps)^{-1} \eps^\alpha \|\psi\|_{L^2(\T^3)}
    + (\eta^\eps)^{-\frac12} \eps^{\frac32} \|\psi\|_{H^1(\T^3)}
    \big)
    \|\varphi\|_{H^3(\T^3)},
    \end{split}
\end{equation}
where $\mathcal{R}$ denotes the resistance matrix in \eqref{resis.mat}.
\end{enumerate}
\end{lemma}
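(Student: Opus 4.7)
The natural candidate is
\[ M^\eps := \eps^{3-\alpha}\bigl(-\Delta w^\eps + \nabla q^\eps\bigr), \]
interpreted as a matrix-valued distribution on $\Omega^\eps$; this yields \eqref{est.item1.lem.Corr2} by construction. Since $(w^\eps,q^\eps) = (\mathrm{Id},0)$ on $\T^3\setminus K$ by the definition in Section \ref{sec.corr}, the support of $M^\eps$ is contained in $K$. Membership in $W^{-1,\infty}(\Omega^\eps)^{3\times 3}$ follows from $w^\eps\in W^{1,\infty}_0(\Omega^\eps)^{3\times 3}$ and $q^\eps\in L^\infty(\Omega^\eps)^{1\times 3}$ by integration by parts, using the pointwise bounds recorded in Section \ref{sec.corr}.

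\textbf{Part (2): strategy.} For the quantitative estimate I would follow the cell-by-cell argument of \cite[Proof of Lemma 2.2]{Hof23}, the only adaptation being that the sum is restricted to cells meeting $K$. Inside each of $\mathcal{C}_i^\eps$, $\mathcal{D}_i^\eps$, $\mathcal{K}_i^\eps$ the pair $(w^\eps,q^\eps)$ solves the Stokes system and on $\mathcal{K}_i^\eps$ it equals $(e_k, 0)$, so $-\Delta w^\eps + \nabla q^\eps$ is a layer distribution supported within each active cell on the two concentric spheres $\partial B_{\eta^\eps/4}(x_i^\eps)\cup \partial B_{\eta^\eps/2}(x_i^\eps)$. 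After decomposing
\[ \langle M^\eps \varphi,\psi\rangle_{L^2(\Omega^\eps)} = \eps^{3-\alpha}\sum_i \int_{\mathcal{C}_i^\eps\cup\mathcal{D}_i^\eps} (-\Delta w^\eps + \nabla q^\eps)\varphi\cdot \psi, \]
I would freeze $\varphi,\psi$ at the cell centers $x_i^\eps$. Integrating by parts on each cell (the boundary terms on $\partial\mathcal{T}_i^\eps$ and $\partial\mathcal{Q}_i^\eps$ vanish because $w^\eps = 0$ on the former and $w^\eps = \mathrm{Id}$, $q^\eps = 0$ on the latter) and rescaling $y = (x - x_i^\eps)/\eps^\alpha$ to $\R^3\setminus \mathcal{T}$, the defining formula \eqref{resis.mat} identifies the leading cell contribution as $\eps^\alpha (\mathcal{R}\,\varphi(x_i^\eps))\cdot \psi(x_i^\eps)$. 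Multiplying by $\eps^{3-\alpha}$ and summing over the $N^3\approx \eps^{-3}$ active cells recovers a Riemann sum for $\int_K \mathcal{R}\varphi \cdot \psi$, i.e.\ the target $\langle \mathcal{R}\mathbbm{1}_K\varphi,\psi\rangle_{L^2(\Omega^\eps)}$.

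\textbf{Part (2): error control and main obstacle.} Two families of errors remain. The first is the replacement of $\varphi,\psi$ by their cell-center values: Taylor's theorem, the Sobolev embedding $H^3\hookrightarrow C^1$, and a standard Riemann sum estimate control this error by $(\eta^\eps)^{-1/2}\eps^{3/2}\|\psi\|_{H^1}\|\varphi\|_{H^3}$, with the factor $(\eta^\eps)^{-1/2}$ arising when restricting to the annular support of the layer. The second is the mismatch between $(w^\eps,q^\eps)$ on the annulus $\mathcal{D}_i^\eps$ and the free-space Stokes profile $(w_k,q_k)$ on $\R^3\setminus\mathcal{T}$: exploiting the pointwise decay $|\mathrm{Id} - w_k(y)|\lesssim |y|^{-1}$ at infinity, together with elliptic regularity for the Stokes extension inside $\mathcal{D}_i^\eps$, yields a contribution of size $(\eta^\eps)^{-1}\eps^\alpha\|\psi\|_{L^2}\|\varphi\|_{H^3}$. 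Adding the two reproduces \eqref{est.item2.lem.Corr2}. The hard part is the careful tracking of the Stokes extension in $\mathcal{D}_i^\eps$ and the sharp bookkeeping between the intermediate scale $\eta^\eps$ and the Sobolev norms on the right-hand side; the exponents in \eqref{est.item2.lem.Corr2} reflect the optimal balance, familiar from \cite{Hof23,All90a,All90b}, between the annulus size and the decay rate of the free-space Stokes profile, and the analysis transfers essentially verbatim to our partially perforated setting since everything is local to each cell.
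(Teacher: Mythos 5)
Your proposal is correct and follows essentially the same route as the paper, which itself defers to \cite[Proof of Lemma 2.2]{Hof23} and Allaire's explicit formula for $M^\eps$ (a sum of a uniform sphere layer $m^{\eps,1}_{k,i}$ carrying $\mathcal{R}$, a far-field remainder $m^{\eps,2}_{k,i}$, and a divergence term localized in the annulus $\mathcal{D}^\eps_i$); your two families of errors line up with this decomposition. One clarification: the paper defines $\eps^{\alpha-3}M^\eps$ to be the part of $-\Delta w^\eps + \nabla q^\eps$ supported on $\bigcup_i\partial\mathcal{D}^\eps_i$, excluding the layer on $\partial\Omega^\eps = \bigcup_i\partial\mathcal{T}^\eps_i$; your definition as a distribution on the open set $\Omega^\eps$ implicitly does the same (since $\partial\Omega^\eps$ lies outside $\Omega^\eps$), and it is this compact support in the interior that makes $\langle M^\eps\varphi,\psi\rangle$ meaningful for $\varphi,\psi$ not vanishing on $\partial\Omega^\eps$ — your parenthetical ``the boundary terms on $\partial\mathcal{T}^\eps_i$ vanish because $w^\eps=0$'' slightly misstates this, since the normal stress (which would encode the drag, hence $\mathcal{R}$) does not vanish there; it is simply not part of $M^\eps$.
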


\begin{proof}
We only give the outline since it is almost parallel to \cite[Lemma 2.2]{Hof23}. Thus we only highlight the parts that are concerned with the localization of holes in $K$.

(\ref{item1.lem.Corr2}) 
By definition, $-\Delta w^{\eps}+\nabla  q^{\eps}$ is supported on $\bigcup_{i}(\partial\mathcal{C}^{\eps}_i\cup\partial\mathcal{D}^{\eps}_i)=\bigcup_i\partial \mathcal{D}_i^{\eps}\cup \partial\Omega^{\eps}$. We define $\eps^{\alpha-3} M^{\eps}$ to be the part of it that is supported on $\bigcup_i\partial \mathcal{D}^{\eps}_i$. 
Then \eqref{est.item1.lem.Corr2} holds true.

(\ref{item2.lem.Corr2}) 
Recall from \cite[Subsection 2.3.2]{All90a} that $M^\eps = (M^\eps_1, M^\eps_2, M^\eps_3)$ is expressed as 
\begin{equation*}\label{est1.prf.lem.Corr2}
    M^\eps_k
    = \eps^{3-\alpha} \sum_{i}
    \Big(
    m^\eps_{k,i} + \nabla\cdot\big(\mathbbm{1}_{\mathcal{D}^\eps_i}(-\nabla w^\eps_k+q^\eps_k {\rm Id})\big)
    \Big),
    \quad
    k=1,2,3. 
\end{equation*}
The vector-valued distribution $m^\eps_{k,i}$ is given by $m^\eps_{k,i} = m^{\eps,1}_{k,i} + m^{\eps,2}_{k,i}$ and 
\begin{equation*}\label{est2.prf.lem.Corr2}
    \begin{split}
    m^{\eps,1}_{k,i}
    &:= \frac{\eps^\alpha}{2}
    \big(
    \mathcal{R}_{k} + 3(\mathcal{R}_{k} \cdot n) n 
    \big)
    \delta^i_{\eta^\eps/4}, 
   \qquad
    m^{\eps,2}_{k,i}
    := \frac{(\eta^\eps)^{-1} \eps^{2\alpha}}{2}
    r^\eps_{k,i}
    \delta^i_{\eta^\eps/4}. 
    \end{split}
\end{equation*}
Here $n$, $\delta^i_{\eta^\eps/4}$ and $r^\eps_{k,i}$ denote the outward unit normal vector to $\partial B_{\eta^\eps/4}(x^\eps_i)$, the unit mass measure concentrated on $\partial B_{\eta^\eps/4}(x^\eps_i)$, and a term satisfying $\|r^\eps_{k,i}\|_{W^{1,\infty}(\partial B_{\eta^\eps/4}(x^\eps_i))} \lesssim 1$, respectively. Then \eqref{est.item2.lem.Corr2} follows from the following three estimates: 
\begin{equation*}
    \begin{split}
    &\bigg|
    \bigg\langle
    \bigg(
    \eps^{3-\alpha} \sum_{i} m^{\eps,1}_{k,i} 
    - \mathcal{R}_k \mathbbm{1}_{K}
    \bigg) \varphi, 
    \psi
    \bigg\rangle_{L^2(\mathbb{T}^3)}
    \bigg| \\
    &\quad\lesssim
    (\eta^\eps)^{-\frac12} \eps^{\frac32}
    \|\psi\|_{H^1(\T^3)}
    \|\varphi\|_{H^3(\T^3)}, \\
    &\bigg|
    \bigg\langle
    \bigg(
    \eps^{3-\alpha} \sum_{i} m^{\eps,2}_{k,i} 
    \bigg) \varphi, 
    \psi
    \bigg\rangle_{L^2(\mathbb{T}^3)}
    \bigg| \\
    &\quad\lesssim
    (\eta^\eps)^{-1} \eps^\alpha 
    \big(
    \|\psi\|_{L^2(\T^3)}
    + (\eta^\eps)^{-\frac12} \eps^{\frac32} \|\psi\|_{H^1(\T^3)}
    \big)
    \|\varphi\|_{H^3(\T^3)}, \\
    &\bigg|
    \bigg\langle
    \bigg(
    \eps^{3-\alpha} \sum_{i} \nabla\cdot\big(\mathbbm{1}_{\mathcal{D}^\eps_i}(-\nabla w^\eps_k+q^\eps_k {\rm Id})\big) 
    \bigg) \varphi, 
    \psi
    \bigg\rangle_{L^2(\mathbb{T}^3)}
    \bigg| \\
    &\quad\lesssim
    (\eta^\eps)^{-\frac12} \eps^{\frac32}
    \|\psi\|_{H^1(\T^3)}
    \|\varphi\|_{H^3(\T^3)}, 
    \end{split}
\end{equation*} 
combined with $(\eta^\eps)^{-1} \eps^\alpha \le 1$ due to the definition of $\eta^\eps$ in \eqref{lengthscale}. The reader is referred to \cite[Proof of Lemma 2.2]{Hof23} for the necessary details. We conclude the proof.
\end{proof}

Lemma \ref{lem.Corr3} below corresponds to \cite[Lemma 2.3]{Hof23}. The proof is omitted since it is almost identical to \cite[Proof of Lemma 2.3]{Hof23}.

\begin{lemma}\label{lem.Corr3}
Let $1<p<+\infty$. There exists a linear operator $\opbogeps$ mapping $\varphi\in W^{1,p}(\T^3)^3$ with $\nabla\cdot \varphi = 0$ to an element of $\opbogeps(\varphi) \in W^{1,p}(\T^3)^3$ such that the following hold: 
\begin{equation}\label{item1.lem.Corr3}
    \nabla\cdot \opbogeps(\varphi) = w^\eps \cdot \nabla \varphi
\end{equation}
and
\begin{equation}\label{item2.lem.Corr3}
    \|\nabla^j \opbogeps(\varphi)\|_{L^p(\T^3)}
    \lesssim (\eta^\eps)^{1-j} \|({\rm Id} - w^\eps) \cdot \nabla \varphi\|_{L^p(\T^3)},
    \quad
    j=0,1.
\end{equation}
\end{lemma}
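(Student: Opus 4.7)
The plan is to define $\opbogeps(\varphi)$ cell-by-cell, by applying a classical Bogovskii operator on each ball $B_i := B_{\eta^\eps/2}(x_i^\eps)$ with homogeneous Dirichlet boundary data, and summing the resulting local solutions. The starting point is the observation that each $w_k^\eps$ is divergence-free on all of $\T^3$: by construction $w_k^\eps$ vanishes continuously across $\partial \mathcal{T}_i^\eps$ (because $w_k|_{\partial\mathcal{T}} = e_k$), so extending by zero inside the holes produces no surface concentration in the distributional divergence. This yields the identity
\[
w^\eps \cdot \nabla \varphi \;=\; \sum_{k=1}^{3} \nabla \cdot (\varphi_k w_k^\eps) \quad \text{in } \T^3,
\]
and, combined with $\nabla \cdot \varphi = 0$, the algebraic reformulation
\[
w^\eps \cdot \nabla \varphi \;=\; -({\rm Id} - w^\eps) \cdot \nabla \varphi \quad \text{in } \T^3.
\]
In particular the distribution $w^\eps \cdot \nabla \varphi$ is supported in $\bigcup_i B_i$, since $w^\eps = {\rm Id}$ outside these balls.

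Next I would verify the compatibility condition required by Bogovskii on each $B_i$. Using $w^\eps = {\rm Id}$ on $\partial B_i$ together with the divergence theorem,
\[
\int_{B_i} w^\eps \cdot \nabla \varphi \dd x \;=\; \int_{\partial B_i} \varphi \cdot \nu \dd \sigma \;=\; \int_{B_i} \nabla \cdot \varphi \dd x \;=\; 0.
\]
Hence the classical Bogovskii operator on the ball $B_i$ furnishes $U_i \in W^{1,p}_0(B_i)^3$ solving $\nabla \cdot U_i = w^\eps \cdot \nabla \varphi$ in $B_i$, and a straightforward dilation from the unit ball gives
\[
\|U_i\|_{L^p(B_i)} \lesssim \eta^\eps \|w^\eps \cdot \nabla \varphi\|_{L^p(B_i)}, \qquad \|\nabla U_i\|_{L^p(B_i)} \lesssim \|w^\eps \cdot \nabla \varphi\|_{L^p(B_i)},
\]
with implicit constants depending only on $p$. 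Setting $\opbogeps(\varphi) := \sum_i U_i$, extended by zero outside $\bigcup_i B_i$, the vanishing Dirichlet traces of the $U_i$ on $\partial B_i$ prevent surface distributions from appearing in the gradient, so $\opbogeps(\varphi) \in W^{1,p}(\T^3)^3$; and $\nabla \cdot \opbogeps(\varphi) = w^\eps \cdot \nabla \varphi$ holds globally because both sides vanish outside $\bigcup_i B_i$. This already gives \eqref{item1.lem.Corr3}.

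Finally, because the balls $B_i$ are pairwise disjoint (they lie inside the disjoint cubes $\mathcal{Q}_i^\eps$ of sidelength $\eps \ge \eta^\eps$), summing the local estimates in $p$-th powers and invoking $w^\eps \cdot \nabla \varphi = -({\rm Id} - w^\eps) \cdot \nabla \varphi$ on each $B_i$ yields
\[
\|\nabla^j \opbogeps(\varphi)\|_{L^p(\T^3)}^p = \sum_i \|\nabla^j U_i\|_{L^p(B_i)}^p \lesssim (\eta^\eps)^{(1-j)p} \|({\rm Id} - w^\eps) \cdot \nabla \varphi\|_{L^p(\T^3)}^p,
\]
which is precisely \eqref{item2.lem.Corr3}. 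I do not anticipate a genuine obstacle: the main technical point is to ensure that the scaled Bogovskii constant on $B_i$ is independent of $\eps$, a classical fact for balls via the John-domain theory, and that the boundary integral in the compatibility check is justified for $\varphi \in W^{1,p}$ by standard trace theory. Overall the argument is a direct transcription of the Allaire--H\"ofer construction to the partially perforated setting, the only input specific to our geometry being the disjointness of the balls $B_i$ and the fact that $w^\eps = {\rm Id}$ on their boundaries.
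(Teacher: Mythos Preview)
Your proposal is correct and is exactly the cell-by-cell Bogovskii construction that the paper defers to (it omits the proof, citing \cite[Lemma 2.3]{Hof23} as ``almost identical''): localize the source $w^\eps\cdot\nabla\varphi=-({\rm Id}-w^\eps)\cdot\nabla\varphi$ to the balls $B_{\eta^\eps/2}(x_i^\eps)$, verify the mean-zero compatibility via $w^\eps={\rm Id}$ on $\partial B_i$ and $\nabla\cdot\varphi=0$, apply a rescaled Bogovskii, and sum over the disjoint balls. One minor point worth noting for the downstream application in Proposition~\ref{prop.RoC.E}: there one needs $\check u^\eps=w^\eps u_E-\opbogeps(u_E)$ to vanish on $\partial\Omega^\eps$, so the Bogovskii should in fact be taken on the annuli $B_i\setminus\mathcal{T}_i^\eps$ rather than on the full balls---this does not affect your estimates, since the rescaled annuli $B_{1/2}\setminus(\eps^\alpha/\eta^\eps)\mathcal{T}$ are John domains with uniform constant, and the compatibility integral is unchanged because $w^\eps=0$ on $\mathcal{T}_i^\eps$.
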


    \subsubsection{Rate of convergence}

Let $(u_E, p_E, f_E)$ be the triplet satisfying the Euler equations in Proposition \ref{prop.CoE}. Set 
\[
    f^{\eps} = f_E|_{\Omega^\eps}. 
\]
Then we consider the Navier-Stokes equations in $\Omega^{\eps}$ with viscosity $\eps^\beta$ and forcing $f^{\eps}$ 
\begin{equation}\label{eq.NS_fc_E}
    \left\{
    \begin{array}{ll}
    \partial_t u^\eps - \eps^\beta \Delta u^\eps + u^\eps\cdot\nabla u^\eps + \nabla p^\eps 
    = f^{\eps} &\mbox{in}\ (0,1)\times\Omega^\eps,\\
    \nabla\cdot u^\eps = 0 &\mbox{in}\ (0,1)\times\Omega^\eps,\\
    u^\eps = 0 &\mbox{on}\ (0,1)\times\partial\Omega^\eps,\\
    u^\eps = u^\eps_0 &\mbox{on}\ \{0\}\times\Omega^\eps.
    \end{array}\right.
\end{equation}
For this section, we will work with general initial data $u^\eps_0 \in L^2_\sigma(\Omega^\eps)$ and Leray-Hopf weak solutions of \eqref{eq.NS_fc_E} that satisfy the energy inequality 
\begin{equation}\label{EnIneq}
    \begin{split}
    &\frac12 \|u^\eps(t)\|_{L^2(\Omega^\eps)}^2 
    + \eps^\beta 
    \int_0^t 
    \|\nabla u^\eps(s)\|_{L^2(\Omega^\eps)}^2 
    \dd s \\
    &\le
    \frac12 \|u^\eps_0\|_{L^2(\Omega^\eps)}^2 
    + \int_0^t \int_{\Omega^\eps}
    f^{\eps} \cdot u^\eps
    \dd x \dd s,
    \quad
    t\in [0,1].
    \end{split}
\end{equation}

The following proposition shows that Leray-Hopf weak solutions $u^\eps$ of \eqref{eq.NS_fc_E} converge to $u_E$ in the $L^2$-sense under smallness in $\eps$ on the norm of initial data $\|u^\eps_0\|_{L^2(\Omega^\eps)}$.

\begin{proposition}\label{prop.RoC.E}
For $0<\eps<1$, we have 
\begin{equation}\label{est1.prop.RoC.E}
    \begin{split}
    \|u^\eps(t)-u_E(t)\|_{L^2(\Omega^\eps)}
    &\lesssim
    \|u^\eps_0\|_{L^2(\Omega^\eps)} 
    + \eps^{\mathfrak p_E},
    \quad
    t\in [0,1], 
    \end{split}
\end{equation}
where the exponent $\mathfrak p_E$ is defined in \eqref{def.pE}.
\end{proposition}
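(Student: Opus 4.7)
The plan is to adapt the weak-strong stability argument of H\"ofer \cite{Hof23} to our partially perforated setting, using the correctors $(w^\eps, q^\eps)$ built in Section \ref{sec.corr}, the resistance distribution $M^\eps$ of Lemma \ref{lem.Corr2}, and the Bogovski\u{\i}-type operator $\opbogeps$ of Lemma \ref{lem.Corr3}. The crucial point is that these objects were designed precisely so that a no-slip correction of $u_E$ can be made divergence-free with an explicit, quantitatively controlled residual.

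First, I would introduce an approximate velocity on $\Omega^\eps$ by setting
\[
v^\eps := w^\eps u_E - \opbogeps(u_E),
\]
which is divergence-free by Lemma \ref{lem.Corr3}\eqref{item1.lem.Corr3}, vanishes on $\partial\Omega^\eps$ since $w^\eps$ does, and is close to $u_E$ in $L^2(\T^3)$ via Lemma \ref{lem.Corr1}\eqref{item1.lem.Corr1} and Lemma \ref{lem.Corr3}\eqref{item2.lem.Corr3}. By the triangle inequality it is enough to bound $u^\eps - v^\eps$ in $L^2(\Omega^\eps)$.

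Second, I would derive the equation satisfied by $v^\eps$ from the Euler equation \eqref{eq1.prop.CoE}: after multiplication by $w^\eps$, combining with $\eps^\beta(-\Delta w^\eps + \nabla q^\eps) = \eps^{\alpha+\beta-3} M^\eps$ from Lemma \ref{lem.Corr2}\eqref{item1.lem.Corr2}, and absorbing the Bogovski\u{\i} correction, one obtains an identity of the schematic form
\[
\partial_t v^\eps - \eps^\beta \Delta v^\eps + (v^\eps\cdot\nabla)v^\eps + \nabla \pi^\eps
= w^\eps f_E + \eps^{\alpha+\beta-3} M^\eps u_E + \mathcal{E}^\eps \quad \text{on}\ \Omega^\eps,
\]
where $\mathcal{E}^\eps$ collects every mismatch: the cross term $2\eps^\beta \nabla w^\eps \cdot \nabla u_E$, the commutator between $w^\eps$ and the convective derivative, the time derivative and transport of $\opbogeps(u_E)$, and lower-order pressure reassignments.

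Third, I would run the weak-strong loop. Combining (a) the energy inequality \eqref{EnIneq} for $u^\eps$; (b) the energy identity for $v^\eps$ obtained by pairing its equation with $v^\eps$ itself; and (c) the evolution of the cross product $\langle u^\eps, v^\eps\rangle_{L^2(\Omega^\eps)}$, obtained by testing the weak formulation of \eqref{eq.NS_fc_E} by $v^\eps$, one gets
\[
\tfrac12 \|u^\eps(t)-v^\eps(t)\|_{L^2(\Omega^\eps)}^2
\leq \tfrac12 \|u^\eps_0 - v^\eps(0)\|_{L^2(\Omega^\eps)}^2
+ C\int_0^t \|u^\eps - v^\eps\|_{L^2(\Omega^\eps)}^2\dd s + \mathcal{R}^\eps(t).
\]
The linear-in-$\|u^\eps - v^\eps\|^2$ term comes from the convective nonlinearity paired with $\nabla v^\eps$ away from the particles and uses $\|\nabla u_E\|_{L^\infty}<\infty$ from Proposition \ref{prop.CoE}; the remainder $\mathcal{R}^\eps(t)$ carries the contributions of $\mathcal{E}^\eps$ and of $\eps^{\alpha+\beta-3}(M^\eps - \mathcal{R}\mathbbm{1}_K)u_E$.

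Fourth, I would estimate each piece of $\mathcal{R}^\eps$ using Lemmas \ref{lem.Corr1}--\ref{lem.Corr3} and the fact that $u_E, f_E$ are smooth with $\eps$-independent $H^s$-norms. Tuning the free scale $\eta^\eps \in [\eps^\alpha, \eps]$ (and Cauchy--Schwarz on the bilinear form of Lemma \ref{lem.Corr2}\eqref{item2.lem.Corr2}) produces three competing contributions,
\[
\eps^\beta \ (\text{viscous loss}), \qquad
\eps^{\alpha - 3/2} \ (\text{corrector/Bogovski\u{\i} loss}), \qquad
\eps^{\alpha+\beta-3} \ (\text{Brinkman residual}),
\]
whose minimum is exactly $\mathfrak{p}_E$. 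Grönwall's inequality, together with $\|v^\eps - u_E\|_{L^2(\T^3)} \lesssim \eps^{\mathfrak p_E}$, then yields \eqref{est1.prop.RoC.E}.

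The hard part will be the nonlinear convection. Because $|\nabla w^\eps|$ is of order $\eps^{-\alpha}$ inside the coronas $\mathcal{C}^\eps_i\cup\mathcal{D}^\eps_i$, a naive $L^\infty$ bound on $\nabla v^\eps$ explodes. The whole point of the construction is that this dangerous gradient is precisely what produces, after integration by parts, the Brinkman friction $\eps^{\alpha-3} M^\eps$, which when multiplied by $\eps^\beta$ leaves only the tame residual $\eps^{\alpha+\beta-3}(M^\eps - \mathcal{R}\mathbbm{1}_K)$ controllable via Lemma \ref{lem.Corr2}\eqref{item2.lem.Corr2} in the $H^{-3}\times H^1$ duality. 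Making this cancellation rigorous, and absorbing the remaining localized gradient terms against $\eps^\beta \|\nabla(u^\eps-v^\eps)\|_{L^2}^2$ through the Hardy/corrector inequality of Lemma \ref{lem.Corr1}\eqref{item2.lem.Corr1} applied to $u^\eps - v^\eps \in H^1_0(\Omega^\eps)$, is the technical core of the argument.
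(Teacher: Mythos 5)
Your proposal follows essentially the same route as the paper: the same corrected field $w^\eps u_E - \opbogeps(u_E)$ (the paper calls it $\check{u}^\eps$ and reserves $v^\eps$ for the difference with $u^\eps$), the same weak-strong energy estimate using \eqref{EnIneq}, the same splitting of the error into the three competing powers $\eps^\beta$, $\eps^{\alpha-3/2}$, $\eps^{\alpha+\beta-3}$ via Lemmas~\ref{lem.Corr1}--\ref{lem.Corr3} and a tuned intermediate scale $\eta^\eps$, and Gr\"onwall to close. One small imprecision: since the target is Euler (not Euler--Brinkman), there is no cancellation of $\eps^{\alpha+\beta-3}\mathcal R\mathbbm{1}_K u_E$ against anything; the split $M^\eps=(M^\eps-\mathcal R\mathbbm{1}_K)+\mathcal R\mathbbm{1}_K$ is only a device to make the distributional pairing estimable, and the $\mathcal R\mathbbm{1}_K$ part is itself one of the residuals producing the $\eps^{\alpha+\beta-3}$ rate.
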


\begin{proof}
We only give the outline since it is almost parallel to \cite[Proof of Theorem 1.2]{Hof23}. Thus estimates such as embeddings used in \cite{Hof23} will not be repeated here. Also, we focus solely on the case when $\eps$ is sufficiently small; otherwise, the statement is trivial.

Define
\[
     \Check{u}^\eps = w^\eps u_E - \opbogeps(u_E), 
     \qquad
     v^\eps = \Check{u}^\eps - u^\eps,
\]
where $w^\eps$ is the corrector in \eqref{Corr} and $\opbogeps(\,\cdot\,)$ the operator in Lemma \ref{lem.Corr3}. From 
\[
    u^\eps - u_E = -({\rm Id} - w^\eps)u_E - \opbogeps(u_E) - v^\eps,
\]
we see that, by using Lemmas \ref{lem.Corr1} and \ref{lem.Corr3}, 
\begin{equation}\label{est0.prf.prop.RoC.E}
    \begin{split}
    \|u^\eps(t)-u_E(t)\|_{L^2(\Omega^\eps)}
    \lesssim
    (\eta^\eps)^{\frac12} \eps^{\alpha-\frac32} 
    + \|v^\eps(t)\|_{L^2(\Omega^\eps)}
    \lesssim
    \eps^{\alpha-1} 
    + \|v^\eps(t)\|_{L^2(\Omega^\eps)}. 
    \end{split}
\end{equation}
Recall that $\eta^\eps\le\eps$ is the intermediate lengthscale introduced in \eqref{lengthscale}.

Thus we focus on estimating $\|v^\eps(t)\|_{L^2(\Omega^\eps)}$. We first observe that $\Check{u}^\eps$ satisfies
\[
    \left\{
    \begin{array}{ll}
    \partial_t \Check{u}^\eps - \eps^\beta \Delta \Check{u}^\eps + w^\eps (u_E\cdot\nabla u_E) + \nabla p_E
    = w^\eps f^\eps + g^\eps
    &\mbox{in}\ (0,1)\times\Omega^\eps,\\
    \nabla\cdot \Check{u}^\eps = 0 &\mbox{in}\ (0,1)\times\Omega^\eps,\\
    \Check{u}^\eps = 0 &\mbox{on}\ (0,1)\times\partial\Omega^\eps,\\
    \Check{u}^\eps = 0 &\mbox{on}\ \{0\}\times\Omega^\eps
    \end{array}\right.
\]
with the term $g^\eps$ involving $q^\eps$ in \eqref{Corr} defined by
\[
    \begin{split}
    g^\eps
    &= 
    ({\rm Id} - w^\eps) \nabla p_E 
    - \eps^\beta  \Delta (w^\eps u_E) 
    - (\partial_t - \eps^\beta \Delta) \opbogeps(u_E) \\
    &= 
    ({\rm Id} - w^\eps) \nabla p_E
    - \eps^\beta 
    (\Delta w^\eps - \nabla q^\eps) u_E \\
    &\quad
    - \eps^\beta 
    \big(
    2\nabla w^\eps\cdot\nabla u_E + w^\eps \Delta u_E + (\nabla q^\eps) u_E 
    \big) 
    - (\partial_t - \eps^\beta \Delta) \opbogeps(u_E). 
    \end{split}
\]
Here the term $\Delta w^\eps - \nabla q^\eps$ should be understood in the sense of distributions. Then, by direct computation and $\Check{u}^\eps(0)=0$, we see that $v^\eps$ satisfies
\[
    \left\{
    \begin{array}{ll}
    \partial_t v^\eps - \eps^\beta \Delta v^\eps + \nabla (p_E - p^\eps)
    = G^\eps
    &\mbox{in}\ (0,1)\times\Omega^\eps,\\
    \nabla\cdot v^\eps = 0 &\mbox{in}\ (0,1)\times\Omega^\eps,\\
    v^\eps = 0 &\mbox{on}\ (0,1)\times\partial\Omega^\eps,\\
    v^\eps = -u^\eps_0 &\mbox{on}\ \{0\}\times\Omega^\eps 
    \end{array}\right.
\]
with
\[
    G^\eps 
    = (\Check{u}^\eps - v^\eps)\cdot\nabla (\Check{u}^\eps - v^\eps)
    - w^\eps (u_E\cdot\nabla u_E) 
    - ({\rm Id} - w^\eps) f^\eps + g^\eps. 
\]
Hence we obtain the inequality
\begin{equation}\label{est6.prf.prop.RoC.E}
    \begin{split}
    \frac12 \|v^\eps(t)\|_{L^2(\Omega^\eps)}^2
    + \eps^\beta 
    \int_0^t 
    \|\nabla v^\eps(s)\|_{L^2(\Omega^\eps)}^2
    \dd s 
    \le
    \frac12 \|u^\eps_0\|_{L^2(\Omega^\eps)}^2
    + |I_1(t)| + |I_2(t)|, 
    \end{split}
\end{equation}
where
\begin{equation*}
\begin{split}
    I_1(t)
    &:= 
    \int_0^t \int_{\Omega^\eps}
    \big(
    -v^\eps\cdot\nabla \Check{u}^\eps
    + \Check{u}^\eps\cdot\nabla \Check{u}^\eps
    - w^\eps(u_E\cdot\nabla u_E)
    \big)
    \cdot v^\eps
    \dd x \dd s, \\
    I_2(t)
    &:= 
    \int_0^t \int_{\Omega^\eps}
    \big(
    -({\rm Id} - w^\eps) f^\eps + g^\eps 
    \big)
    \cdot v^\eps
    \dd x \dd s.
    \end{split}
\end{equation*}
Notice that some cancellations in nonlinearity are implicitly used in $I_1(t)$.

Then, by smoothness of $u_E, p_E$ and the estimates for $w^\eps, q^\eps$ in Section \ref{sec.corr}, we verify  
\begin{equation}\label{est7.prf.prop.RoC.E}
    \begin{split}
    &|I_1(t)| + |I_2(t)| \\
    &\le
    C \int_0^t 
    \|v^\eps(s)\|_{L^2(\Omega^\eps)}^2 \dd s
    + \Big(\frac{\eps^\beta}{4} + C\eta^\eps\Big) 
    \int_0^t 
    \|\nabla v^\eps(s)\|_{L^2(\Omega^\eps)}^2
    \dd s \\
    &\quad
    + C 
    \big(
    \eta^\eps \eps^{2\alpha-\beta-3}
    + (\eta^\eps)^{-1} \eps^{2\alpha+\beta-3}
    + \eps^{2\alpha+2\beta-6}
    + \eps^{2\beta}
    + (\eta^\eps)^2
    \big)
    \end{split}
\end{equation}
for some $C>0$ independent of $\eps$. The details are omitted since they are similar to those in \cite[Proof of Proposition 3.1 (ii)]{Hof23} where the holes are distributed periodically in $\R^3$.

Now we choose the intermediate lengthscale $\eps^\alpha \le \eta^\eps \le \eps$ in \eqref{lengthscale} so that 
\[
    \eta^\eps \simeq o(\eps^{\max\{\beta,1\}})
\]
to ensure that $\eps^\beta/4 + C\eta^\eps \le \eps^\beta/2$ and that 
\[
    \eta^\eps \eps^{2\alpha-\beta-3}
    + (\eta^\eps)^{-1} \eps^{2\alpha+\beta-3}
    + (\eta^\eps)^2
    \lesssim
    \eps^{2\alpha+2\beta-6} 
    + \eps^{2\alpha-3}
    + \eps^{2\beta}.
\]
Then the estimates \eqref{est6.prf.prop.RoC.E}--\eqref{est7.prf.prop.RoC.E} lead to 
\begin{equation*}
    \begin{split}
    &\|v^\eps(t)\|_{L^2(\Omega^\eps)}^2
    + \eps^\beta 
    \int_0^t 
    \|\nabla v^\eps(s)\|_{L^2(\Omega^\eps)}^2
    \dd s \\
    &\lesssim
    \int_0^t 
    \|v^\eps(s)\|_{L^2(\Omega^\eps)}^2 \dd s
    + \|u^\eps_0\|_{L^2(\Omega^\eps)}^2
    + \eps^{2\alpha+2\beta-6}
    + \eps^{2\alpha-3}
    + \eps^{2\beta}.
    \end{split}
\end{equation*}
Moreover, Gr\"{o}nwall's inequality and taking square root give
\begin{equation}\label{est8.prf.prop.RoC.E}
    \begin{split}
    \|v^\eps(t)\|_{L^2(\Omega^\eps)}
    \lesssim
    \|u^\eps_0\|_{L^2(\Omega^\eps)}
    + \eps^{\alpha+\beta-3}
    + \eps^{\alpha-\frac32}
    + \eps^{\beta}.
    \end{split}
\end{equation}
Hence the assertion follows from the inequalities \eqref{est0.prf.prop.RoC.E} and \eqref{est8.prf.prop.RoC.E}.
\end{proof}

    \subsection{Proof of Theorem \ref{thm.main.E}}
    \label{sec.Scaling-Stability.E}

In this section, we prove Theorem \ref{thm.main.E}. Define $(U^\eps, P^\eps, F^{\eps})$ by the scaling \eqref{eq.UPF_E}. Then $(U^\eps, P^\eps, F^{\eps})$ satisfies the Navier-Stokes equations in $\Omega^{\eps}$ with viscosity $1$ and forcing $F^{\eps}$ 
\begin{equation}
    \left\{
    \begin{array}{ll}
    \partial_t U^\eps - \Delta U^\eps + U^\eps\cdot\nabla U^\eps + \nabla P^\eps 
    = F^{\eps} &\mbox{in}\ (0,\eps^{\beta})\times\Omega^\eps,\\
    \nabla\cdot U^\eps = 0 &\mbox{in}\ (0,\eps^{\beta})\times\Omega^\eps,\\
    U^\eps = 0 &\mbox{on}\ (0,\eps^{\beta})\times\partial\Omega^\eps,\\
    U^\eps = U^\eps_0 &\mbox{on}\ \{0\}\times\Omega^\eps.
    \end{array}\right.
\end{equation}

Let $\phi^\eps, \Phi^\eps$ denote the flows on $\Omega^\eps$ for $u^\eps, U^\eps$, respectively, and let $\phi_E$ denote the flow on $\T^3$ for $u_E$. Since $u^\eps(t,x)=\eps^\beta U^\eps(\eps^\beta t,x)$ by \eqref{eq.UPF_E}, we have 
\begin{equation}\label{eq.FlowMaps_E}
    \phi^\eps(t,x) = \Phi^\eps(\eps^\beta t,x),
    \quad
    t\in [0,1]. 
\end{equation}

We first prove the following lemmas for $\phi^\eps, \phi_E$. Recall that $\mathfrak p_E$ is defined in \eqref{def.pE}.

\begin{lemma}\label{lem.RoC.phiE}
For  $0<\eps<1$, we have
\begin{equation}\label{est1.lem.RoC.phiE}
\begin{split}
    \|\phi^\eps(t) - \phi_E(t)\|_{L^2(\Omega^\eps)}
    \lesssim
    \eps^\beta \|U^\eps_0\|_{L^2(\Omega^\eps)} + \eps^{\mathfrak p_E},
    \quad
    t\in [0,1]. 
\end{split}
\end{equation}
\end{lemma}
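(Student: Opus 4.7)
The plan is to run a standard weak-strong stability argument at the level of the flow, combining the $L^\infty_t L^2_x$ velocity estimate of Proposition \ref{prop.RoC.E} with the Lipschitz regularity of $u_E$ via Gr\"{o}nwall's inequality. The starting point is the identity $u^\eps|_{t=0} = \eps^\beta U^\eps_0$ coming from \eqref{eq.UPF_E}, which converts Proposition \ref{prop.RoC.E} into
\[
\|u^\eps(s) - u_E(s)\|_{L^2(\Omega^\eps)} \lesssim \eps^\beta \|U^\eps_0\|_{L^2(\Omega^\eps)} + \eps^{\mathfrak p_E}, \qquad s \in [0,1].
\]
By the time rescaling \eqref{eq.FlowMaps_E}, it then suffices to bound $\|\phi^\eps(t) - \phi_E(t)\|_{L^2(\Omega^\eps)}$ on $t \in [0,1]$, where the difference of two points of $\T^3$ is read through local lifts to $\R^3$ (legitimate since $u_E$ is smooth with bounded velocity and the time horizon has length at most $1$).

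For each $x \in \Omega^\eps$ I would subtract the flow equations satisfied by $\phi^\eps$ and $\phi_E$, inserting the intermediate term $u_E(s,\phi^\eps(s,x))$:
\[
\phi^\eps(t,x) - \phi_E(t,x) = \int_0^t \bigl[u^\eps - u_E\bigr](s, \phi^\eps(s,x)) \dd s + \int_0^t \bigl[u_E(s, \phi^\eps(s,x)) - u_E(s, \phi_E(s,x))\bigr] \dd s.
\]
After taking $L^2(\Omega^\eps;\dd x)$ norms and applying Minkowski's integral inequality, the first summand is handled by the volume-preserving property of $\phi^\eps(s,\cdot): \Omega^\eps \to \Omega^\eps$ associated with the divergence-free, no-slip Leray-Hopf solution $u^\eps$ (see Appendix \ref{appx.Flow}): the change of variables $y = \phi^\eps(s,x)$ reduces it to $\int_0^t \|u^\eps(s) - u_E(s)\|_{L^2(\Omega^\eps)} \dd s$, which is bounded by the display above. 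The second summand is controlled by $\int_0^t \|\nabla u_E(s)\|_{L^\infty(\T^3)} \|\phi^\eps(s) - \phi_E(s)\|_{L^2(\Omega^\eps)} \dd s$, using the spatial Lipschitz bound for the smooth field $u_E$ on $[0,1]\times\T^3$.

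Setting $A(t) := \|\phi^\eps(t) - \phi_E(t)\|_{L^2(\Omega^\eps)}$, these combine into
\[
A(t) \lesssim \eps^\beta \|U^\eps_0\|_{L^2(\Omega^\eps)} + \eps^{\mathfrak p_E} + \int_0^t A(s) \dd s, \qquad t \in [0,1],
\]
and Gr\"{o}nwall's inequality yields \eqref{est1.lem.RoC.phiE}. The main technical hurdle lies in the volume-preservation step: for a Leray-Hopf weak solution on a domain with no-slip boundary, the Lagrangian flow is only defined almost everywhere, yet it still preserves Lebesgue measure and takes values in $\Omega^\eps$ thanks to the boundary condition. This delicate point is precisely the content of Appendix \ref{appx.Flow}, which the proof will invoke rather than reprove.
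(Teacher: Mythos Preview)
Your proposal is correct and follows essentially the same approach as the paper's proof: subtract the flow equations with the intermediate term $u_E(s,\phi^\eps(s,x))$, use the Lipschitz bound on $u_E$ together with the volume-preserving property of $\phi^\eps$, and close by Gr\"{o}nwall combined with Proposition~\ref{prop.RoC.E} and the relation $u^\eps_0=\eps^\beta U^\eps_0$. The only cosmetic difference is that the paper applies Gr\"{o}nwall pointwise in $x$ before taking the $L^2$-norm, whereas you take the $L^2$-norm first and apply Gr\"{o}nwall to $A(t)$; both orderings yield the same bound.
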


\begin{proof}
We compute 
\begin{equation*}
\begin{split}
    \phi^\eps(t,x) - \phi_E(t,x)
    &= 
    \int_0^t 
    u^\eps(s,\phi^\eps(s,x))
    \dd s
    - \int_0^t 
    u_E(s,\phi_E(s,x))
    \dd s \\
    &= 
    \int_0^t 
    \big(
    u^\eps(s,\phi^\eps(s,x))
    - u_E(s,\phi^\eps(s,x))
    \big)
    \dd s \\
    &\quad 
    + \int_0^t 
    \big(
    u_E(s,\phi^\eps(s,x))
    - u_E(s,\phi_E(s,x))
    \big)
    \dd s.
\end{split}
\end{equation*}
Smoothness of $u_E$ leads to 
\begin{equation*}
\begin{split}
    |u_E(t,\phi^\eps(t,x))- u_E(t,\phi_E(t,x))|
    &\le
    \|\nabla u_E(t)\|_{L^\infty(\T^3)}
    |\phi^\eps(t,x) - \phi_E(t,x)|, 
\end{split}
\end{equation*}
which results in
\begin{equation*}
\begin{split}
    |\phi^\eps(t,x) - \phi_E(t,x)|
    &\lesssim
    \int_0^t
    |u^\eps(s,\phi^\eps(s,x)) - u_E(s,\phi^\eps(s,x))|
    \dd s \\
    &\quad
    + \int_0^t
    |\phi^\eps(s,x) - \phi_E(s,x)|
    \dd s.
\end{split}
\end{equation*}
By Gr\"{o}nwall's inequality, there is $C>0$ such that, for almost every $x\in \mathbb{T}^3$, we have
\begin{equation}\label{est1.prf.lem.RoC.phiE}
    \begin{split}
    &|\phi^\eps(t,x) - \phi_E(t,x)| \\
    &\le
    C\int_0^t e^{C(t-s)} 
    |u^\eps(s,\phi^\eps(s,x)) - u_E(s,\phi^\eps(s,x))|
    \dd s. 
    \end{split}
\end{equation}
Take the $L^2$-norm in space in \eqref{est1.prf.lem.RoC.phiE} and use the fact that
\begin{equation*}
    \begin{split}
    \|
    u^\eps(t,\phi^\eps(t,\,\cdot\,)) 
    - u_E(t,\phi^\eps(t,\,\cdot\,))
    \|_{L^2(\Omega^\eps)}
    = 
    \|u^\eps(t) - u_E(t)\|_{L^2(\Omega^\eps)},
    \end{split}
\end{equation*}
which follows from incompressibility of the velocity fields and the boundary condition of $u^{\eps}$ entailing the consistency of integral region. Then we see that, for $t\in[0,1]$, 
\[
    \|\phi^\eps(t) - \phi_E(t)\|_{L^2(\Omega^\eps)} \le
    C\int_0^t 
    \|u^\eps(s) - u_E(s)\|_{L^2(\Omega^\eps)}
    \dd s.
\]
Hence, with 
\[
     u^\eps_0=\eps^\beta U^\eps_0, 
\]
the assertion follows from Proposition \ref{prop.RoC.E}. 
\end{proof}

Recall that $0<L<1$ denotes the length of the set $K$ where the holes are distributed.

\begin{lemma}\label{lem.Markov.E}
For given $0<\eta<1$, take $\eps_0$ defined in \eqref{def.eps_E}. 
Then, for any $0 < \eps \le \eps_0$ of the form \eqref{def.eps} and $U^\eps_0$ satisfying $\|U^\eps_0\|_{L^2(\Omega^\eps)} \le \eps^{\mathfrak p_E - \beta}$, we have
\[
    \begin{split}
    \mathcal{L}\big(
    \{x\in \Omega^\eps~|~|\phi^\eps(t,x) - \phi_E(t,x)| > \eta\}
    \big)
    \lesssim 
    \eta,
    \quad
    t\in [0,1].
    \end{split}
\]
\end{lemma}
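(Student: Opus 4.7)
My plan is to deduce Lemma \ref{lem.Markov.E} from Lemma \ref{lem.RoC.phiE} via Chebyshev's (Markov's) inequality, together with the smallness condition on $\|U_0^\eps\|_{L^2(\Omega^\eps)}$ and the specific definition of $\eps_0$ in \eqref{def.eps_E}. The argument is essentially a routine conversion of an $L^2$ control into a measure-theoretic one, so I do not expect serious obstacles; the only care needed is to keep track of the exponents of $\eps$ and $\eta$ so that the choice of $\eps_0$ really does yield the stated bound.

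First I would apply Chebyshev's inequality in the form
\begin{equation*}
\mathcal{L}\big(\{x\in\Omega^\eps\,:\,|\phi^\eps(t,x)-\phi_E(t,x)|>\eta\}\big)
\le \frac{1}{\eta^2}\,\|\phi^\eps(t)-\phi_E(t)\|_{L^2(\Omega^\eps)}^2,
\quad t\in[0,1].
\end{equation*}
Next, by Lemma \ref{lem.RoC.phiE} and the hypothesis $\|U_0^\eps\|_{L^2(\Omega^\eps)}\le \eps^{\mathfrak p_E-\beta}$, the right-hand side of the $L^2$ estimate simplifies since
\begin{equation*}
\eps^\beta\|U_0^\eps\|_{L^2(\Omega^\eps)}+\eps^{\mathfrak p_E}
\le \eps^\beta\cdot\eps^{\mathfrak p_E-\beta}+\eps^{\mathfrak p_E}
\lesssim \eps^{\mathfrak p_E},
\end{equation*}
so that $\|\phi^\eps(t)-\phi_E(t)\|_{L^2(\Omega^\eps)}\lesssim \eps^{\mathfrak p_E}$ uniformly in $t\in[0,1]$. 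Substituting back into the Chebyshev bound yields
\begin{equation*}
\mathcal{L}\big(\{x\in\Omega^\eps\,:\,|\phi^\eps(t,x)-\phi_E(t,x)|>\eta\}\big)
\lesssim \frac{\eps^{2\mathfrak p_E}}{\eta^2}.
\end{equation*}

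Finally, I would exploit the definition of $\eps_0$ in \eqref{def.eps_E}: since $\lfloor \eta^{-2/\mathfrak p_E}\rfloor+1 > \eta^{-2/\mathfrak p_E}$, we have
\begin{equation*}
\eps \le \eps_0 < L\,\eta^{2/\mathfrak p_E},
\end{equation*}
and hence $\eps^{\mathfrak p_E}\lesssim \eta^2$. Plugging this in, the right-hand side becomes
\begin{equation*}
\frac{\eps^{2\mathfrak p_E}}{\eta^2}\lesssim \frac{\eta^4}{\eta^2}=\eta^2\le \eta,
\end{equation*}
since $0<\eta<1$. This yields the desired conclusion. The only subtle point — and the step most deserving of care rather than difficulty — is confirming that the implicit constants in Lemma \ref{lem.RoC.phiE} (depending on $u_E$ but independent of $\eps$) remain uniform in $\eta$, which is granted because $u_E$ is constructed in Section \ref{sec.Limit.E} independently of the homogenization parameter.
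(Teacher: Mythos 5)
Your proof is correct and follows essentially the same route as the paper's: both rest on Lemma \ref{lem.RoC.phiE}, the smallness hypothesis $\|U_0^\eps\|_{L^2}\le\eps^{\mathfrak p_E-\beta}$, and the relation $\eps^{\mathfrak p_E}\lesssim\eta^2$ forced by \eqref{def.eps_E}. The only (inessential) difference is that the paper applies Markov's inequality at the $L^1$ level followed by H\"older to reach $L^2$, obtaining $\eta^{-1}\eps^{\mathfrak p_E}\lesssim\eta$, whereas you apply Chebyshev directly at the $L^2$ level, obtaining the marginally sharper bound $\eta^{-2}\eps^{2\mathfrak p_E}\lesssim\eta^2\le\eta$.
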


\begin{proof}
By Markov's inequality, H\"{o}lder's inequality and Lemma \ref{lem.RoC.phiE}, we have
\begin{equation*}
    \begin{split}
    &\mathcal{L}\big(\{x\in \Omega^\eps~|~|\phi^\eps(t,x) - \phi_E(t,x)|>\eta\}\big) \\
    &\le
    \eta^{-1} 
    \|\phi^\eps(t) - \phi_E(t)\|_{L^1(\Omega^\eps)} \\
    &\lesssim
    \eta^{-1} 
    \big(
    \eps^\beta \|U^\eps_0\|_{L^2(\Omega^\eps)} + \eps^{\mathfrak p_E}
    \big)
    \lesssim
    \eta^{-1} \eps^{\mathfrak p_E}.
    \end{split}
\end{equation*}
The assertion follows from the definition of $\eps_0$. 
\end{proof}

Now we prove Theorem \ref{thm.main.E}.

\begin{proofx}{Theorem \ref{thm.main.E}}
Let $\eta>0$ be given. We aim at proving that $\mathcal{L}(P_1 \setminus \Phi^\eps(\eps^{\beta},P_0^\eps)) \lesssim \eta$. By the relation \eqref{eq.FlowMaps_E}, it suffices to prove that $\mathcal{L}(P_1 \setminus \phi^\eps(1,P_0^\eps)) \lesssim \eta$.

Observe that 
\begin{equation}\label{est1.prf.thm.main.E}
\begin{split}
    \mathcal{L}\big(P_1\setminus \phi^\eps(1,P_0^{\eps})\big)
    &\le
    \mathcal{L}\big(P_1\setminus \phi_E(1,P_0)\big)
    + \mathcal{L}\big(\phi_E(1,P_0)\setminus\phi_E(1, P_0^{\eps})\big)\\
   &\quad + \mathcal{L}\big(\phi_E(1,P_0^\eps)\setminus \phi^\eps(1,P_0^\eps)\big). 
   \end{split}
\end{equation}
The properties of $\phi_E$ in Proposition \ref{prop.CoE} show that 
\begin{equation}\label{est2.prf.thm.main.E}
    \mathcal{L}(P_1\setminus \phi_E(1,P_0))
    \le
    \eta. 
\end{equation}
Thanks to $\nabla\cdot u_E=0$, the volume of a set $A$ is preserved by $\phi_E(t,A)$: 
\begin{equation}\label{est3.prf.thm.main.E}
    \begin{split}
    \mathcal{L}\big(\phi_E(1,P_0)\setminus\phi_E(1, P_0^{\eps})\big)
    &=\mathcal{L}( P_0\setminus P_0^{\eps})\\
    &\le \mathcal{L}\Big(\bigcup_iT^{\eps}_i\Big)
    \lesssim \Big(\frac{L}{\eps}\Big)^3 \eps^{3\alpha}
    \lesssim \eps^{\frac{\mathfrak{p}_E}{2}}
    \lesssim \eta.
    \end{split}
\end{equation}

To estimate the last term in the right of \eqref{est1.prf.thm.main.E}, we use
\[
    \mathcal{L}\big(\phi_E(1,P_0^\eps)\setminus \phi^\eps(1,P_0^\eps)\big) 
    =\mathcal{L}\big(\phi^\eps(1,P_0^\eps)\setminus \phi_E(1,P_0^\eps)\big),
\]
which follows from the combination of
\[
    \mathcal{L}(P_0^\eps) 
    = \mathcal{L}\big(\phi^\eps(t,P_0^\eps)\big) 
    = \mathcal{L}\big(\phi_E(t,P_0^\eps)\big), 
\]
valid thanks to $\nabla\cdot u^\eps=\nabla\cdot u_E=0$, and 
\[
    \begin{split}
    \mathcal{L}\big(\phi^\eps(t,P_0^\eps)\big) 
    &=\mathcal{L}\big(\phi^\eps(t,P_0^\eps)\setminus \phi_E(t,P_0^\eps)\big)
    + \mathcal{L}\big(\phi^\eps(t,P_0^\eps)\cap \phi_E(t,P_0^\eps)\big), \\
    \mathcal{L}\big(\phi_E(t,P_0^\eps)\big) 
    &=\mathcal{L}\big(\phi_E(t,P_0^\eps)\setminus \phi^\eps(t,P_0^\eps)\big)
    + \mathcal{L}\big(\phi^\eps(t,P_0^\eps)\cap \phi_E(t,P_0^\eps)\big).
    \end{split}
\]
Then we have
\[
    \begin{split}
    &\mathcal{L}\big(\phi_E(1,P_0^\eps)\setminus \phi^\eps(1,P_0^\eps)\big) 
    =\mathcal{L}\big(\phi^{\eps}(1,P_0^\eps)\setminus \phi_E(1,P_0^\eps)\big)
    \\
    &\le
    \mathcal{L}\big(\phi^\eps(1,P_0^\eps)\setminus
    \mathcal{V}_\eta[\phi_E(1,P_0^\eps)]\big)
    + \mathcal{L}\big(\mathcal{V}_\eta[\phi_E(1,P_0^\eps)]\setminus \phi_E(1,P_0^\eps)\big).
    \end{split}
\]
Using the inclusion
\[
    \phi^\eps(t,P_0^\eps) \setminus \mathcal{V}_{\eta}[\phi_E(t,P_0^\eps)]
    \subset
    \{x\in P_0^\eps~|~|\phi^\eps(t,x)-\phi_E(t,x)|\ge \eta\},
\]
an estimate valid thanks to smoothness of $u_E$
\[
    \mathcal{L}\big(\mathcal{V}_\eta[\phi_E(1,P_0^\eps)]\setminus \phi_E(1,P_0^\eps)\big)
    \lesssim
    \eta, 
\]
and Lemma \ref{lem.Markov.E}, we obtain
\begin{equation}\label{est4.prf.thm.main.E}
    \begin{split}
    &\mathcal{L}\big(\phi_E(1,P_0^\eps)\setminus \phi^\eps(1,P_0^\eps)\big) \\
    &\lesssim
    \mathcal{L}\big(\{x\in P_0~|~|\phi^\eps(1,x) - \phi_E(1,x)|\ge \eta\}\big) + \eta 
    \lesssim
    \eta.
    \end{split}
\end{equation}
The assertion of Theorem \ref{thm.main.E} follows from \eqref{est1.prf.thm.main.E}--\eqref{est4.prf.thm.main.E}. The proof is complete.
\end{proofx}

    \section{Lagrangian controllability in fully perforated domains}
    \label{sec.LC.D}

Let us consider the 3d Navier-Stokes equations with viscosity $1$ and forcing $F^\eps$, namely \eqref{eq.NS}. Assuming $\Omega^\eps$ to be a domain fully perforated, we are interested in the following Lagrangian controllability problem: move the fluid particles which are initially in a zone $P_0^\eps$ toward a target zone $P_1^\eps$ in some time $T_c>0$. The way to move the particles is by remote action: regard the term $F^\eps$ in \eqref{eq.NS} as a control forcing moving $P_0^\eps$ to $P_1^\eps$ and assume that $F^\eps$ is smooth and supported in a control zone $\Omega_c^\eps$. The control zone $\Omega_c^\eps$ is supposed to be away both from $P_0^{\eps}$ and $P_1^\eps$. The precise definitions $\Omega^\eps, P_0^\eps, P_1^\eps$, and $\Omega_c^\eps$ are as follows.

\begin{itemize}
\item
\textbf{Definition of a fully perforated domain $\Omega^\eps$} \\
Take $1 < \alpha < 3$ and sufficiently large $N\in\N$, and decompose $\T^3$ as  
$\T^3 = \bigcup_i \overline{\mathcal{Q}^\eps_i}$ with $N^3$ small open disjoint small cubes $\mathcal{Q}^\eps_i$ with center $x^\eps_i$ and sidelength 
\begin{equation}\label{def.eps'}
    \eps = \frac{1}{N}. 
\end{equation}
Choose the same reference particle $\mathcal{T}$ as in Section \ref{sec.LC.E}. Then we consider $N^3$ particles $\mathcal{T}^\eps_i = x^\eps_i + \eps^\alpha \mathcal{T} \subset \mathcal{Q}^\eps_i$ and let 
\[
    \Omega^\eps = \T^3 \setminus \bigcup_i \mathcal{T}^\eps_i. 
\]
The set $\Omega^\eps$ obtained in this manner is called a fully perforated domain.

\item
\textbf{Definition of a polluted zone $P_0^\eps$ and a safe zone $P_1^\eps$} \\
Let each of $P_0$ and $P_1$ be a Jordan domain in $\T^3$. Let $\gamma_0$ and $\gamma_1$ denote the Jordan surfaces of $P_0$ and $P_1$, respectively. Assume that $\gamma_0$ and $\gamma_1$ are isotopic in $\T^3$ and surround the same volume. Then we set $P_0^\eps = P_0\cap \Omega^\eps$ and $P_1^\eps = P_1\cap \Omega^\eps$.

\item
\textbf{Definition of a control zone $\Omega_c^\eps$} \\
Let $\Omega_c\neq \emptyset$ be an open set in $\T^3$ such that $\overline{\Omega_{c}} \cap P_0 = \overline{\Omega_{c}} \cap P_1 = \emptyset$.  Notice that $\gamma_0$ and $\gamma_1$ above are then isotopic in $\T^3\setminus\overline{\Omega_{c}}$. By the same reasoning as in Section \ref{sec.LC.E}, we assume that $\Omega_c$ is a ball with smooth boundary. Then we set $\Omega_c^\eps = \Omega_c\cap \Omega^\eps$. 
\end{itemize}

Now let us state two results of the Lagrangian controllability for Leray-Hopf weak solutions of the Navier-Stokes equations \eqref{eq.NS} in a fully perforated domain $\Omega^\eps$.

\begin{theorem}\label{thm.main.E'}
Let $\alpha>3/2$ and choose $\beta>0$ so that $3-\alpha < \beta < \alpha$, which ensures that $\mathfrak p_E$ in \eqref{def.pE} is positive. For given $0<\eta<1$, we define
\begin{equation}\label{def.eps_E'}
    \eps_0=\frac{1}{\lfloor \eta^{-2/\mathfrak{p}_E}\rfloor+1}, 
\end{equation}
where $\lfloor\,\cdot\,\rfloor$ refers to the floor function. 
Then, for any $0 < \eps \le \eps_0$ of the form \eqref{def.eps'} and initial data $U^\eps_0\in L^2_\sigma(\Omega^\eps)$ satisfying
\begin{equation}\label{est1.thm.main.E'}
    \|U^\eps_0\|_{L^2(\Omega^\eps)}
    \le
    \eps^{\mathfrak p_E - \beta}, 
\end{equation}
the Lagrangian controllability of \eqref{eq.NS} in the time $\eps^\beta$ holds: there exists a smooth control forcing $F^\eps$ supported in $\Omega_c^\eps$ with $\|F^\eps\|_{L^{\infty}_t H^s_x} \le C_0 \eps^{-2\beta}$ for any $s\in\N$ 
and a constant $C_0=C_0(s)>0$ depending on $s$
such that 
\begin{equation}\label{est2.thm.main.E'}
    \bigcup_{t\in [0,\eps^{\beta}]} \Phi^\eps(t,P_0^\eps) \subset\Omega^\eps\setminus\overline{\Omega_{c}^\eps}, \qquad 
    \mathcal{L}(P_1^\eps \setminus \Phi^\eps(\eps^\beta,P_0^\eps)) 
    \le C \eta, 
\end{equation}
where $\mathcal{L}$ refers to the Lebesgue measure and $\Phi^\eps$ the flow for a corresponding Leray-Hopf weak solution $U^\eps$ to \eqref{eq.NS}. The constant $C>0$ is independent of $\eta$ and $\eps$. 
\end{theorem}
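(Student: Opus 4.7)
The strategy mirrors the four-step outline for Theorem \ref{thm.main.E} of scaling, homogenization, weak-strong flow stability, and the use of the Euler controller from Proposition \ref{prop.CoE}. The main structural differences in the fully perforated setting are: (a) the holes now fill the whole torus $\T^3$ rather than being localized in the subset $K$; and (b) the initial, target, and control zones $P_0^\eps=P_0\cap\Omega^\eps$, $P_1^\eps=P_1\cap\Omega^\eps$, and $\Omega_c^\eps=\Omega_c\cap\Omega^\eps$ are themselves perforated. Since Proposition \ref{prop.CoE} constructs $(u_E,p_E,f_E)$ on $\T^3$ with $f_E$ supported in $\Omega_c$, setting $f^\eps=f_E|_{\Omega^\eps}$ automatically gives a forcing supported in $\Omega_c^\eps$; the scaling \eqref{eq.UPF_E} then transforms the system back to \eqref{eq.NS} on the short time interval $(0,\eps^\beta)$ as required. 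The Euler controllability result of Proposition \ref{prop.CoE} applies verbatim since $\gamma_0$ and $\gamma_1$ are isotopic in $\T^3\setminus\overline{\Omega_c}$ by assumption.

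The homogenization step requires an analogue of Proposition \ref{prop.RoC.E} in the fully perforated case. This is essentially the original framework of H\"{o}fer \cite[Theorem 1.2]{Hof23}, where the holes are periodically distributed throughout the ambient domain. Concretely, the corrector construction of Section \ref{sec.corr} and Lemmas \ref{lem.Corr1}--\ref{lem.Corr3} go through with $K$ replaced by $\T^3$ and the indicator $\mathbbm{1}_K$ replaced by $1$; our standing assumption $3-\alpha<\beta<\alpha$ with $\alpha>3/2$ ensures that the exponent $\mathfrak p_E$ in \eqref{def.pE} is positive and that the Euler equation is the appropriate limit. One therefore obtains
\[
    \|u^\eps(t)-u_E(t)\|_{L^2(\Omega^\eps)} \lesssim \|u^\eps_0\|_{L^2(\Omega^\eps)} + \eps^{\mathfrak p_E},
    \qquad t\in[0,1],
\]
with the same rate as in the partially perforated case. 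The weak-strong flow-stability argument of Lemma \ref{lem.RoC.phiE} and the Markov inequality of Lemma \ref{lem.Markov.E} then transfer without change, using now the threshold $\eps_0$ from \eqref{def.eps_E'}.

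For the final measure estimate, I would follow the decomposition in the proof of Theorem \ref{thm.main.E}, but with $P_1$ replaced by $P_1^\eps$ throughout. This introduces the additional term $\mathcal{L}(P_1\setminus P_1^\eps)$, which is bounded by the total volume of the holes, namely $N^3\eps^{3\alpha}=\eps^{3\alpha-3}$. By the choice $\eps_0\approx\eta^{2/\mathfrak p_E}$ in \eqref{def.eps_E'}, one checks that $\eps^{3\alpha-3}\lesssim \eta$, since $\mathfrak p_E\le\alpha-3/2$ and the inequality $6(\alpha-1)\ge\alpha-3/2$ holds for $\alpha>3/2$. The same bound controls $\mathcal{L}(P_0\setminus P_0^\eps)$, which already appeared in \eqref{est3.prf.thm.main.E}; the remaining terms in \eqref{est1.prf.thm.main.E} are handled exactly as before through the incompressibility of $u^\eps$ and $u_E$, the smoothness of $u_E$, and the Markov bound of Lemma \ref{lem.Markov.E}.

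The main obstacle is not conceptual but organizational: verifying that every invocation of the corrector estimates in the proof of Proposition \ref{prop.RoC.E} continues to hold when the holes fill the torus. Since H\"{o}fer's original work \cite{Hof23} treats precisely this fully periodic situation, the analysis reduces to a careful cross-reference rather than any substantive modification; the partial localization in $K$ was an enhancement of Hofer's setup, not a simplification. Once the homogenization rate is in place, the remainder of the argument is a line-by-line transcription of Section \ref{sec.Scaling-Stability.E}, with the minor bookkeeping for the $\eps^{3\alpha-3}$ loss due to the perforation of $P_0$ and $P_1$ as described above.
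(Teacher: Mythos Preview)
Your proposal is correct and follows essentially the same approach as the paper. The paper's own proof is even more terse: it simply observes that the fully perforated case is the limit $L\to 1$ of the partially perforated setting, that all the corrector estimates in Lemmas \ref{lem.Corr1}--\ref{lem.Corr3} and Proposition \ref{prop.RoC.E} are independent of $L\in(0,1)$, and that therefore one may reproduce the proof of Theorem \ref{thm.main.E} verbatim; your explicit bookkeeping for the extra term $\mathcal{L}(P_1\setminus P_1^\eps)\le\eps^{3\alpha-3}\lesssim\eta$ is a correct refinement that the paper leaves implicit.
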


\begin{proof}
Theorem \ref{thm.main.E'} corresponds to the limit case of Theorem \ref{thm.main.E} when $L\to1$. Recall that $L$ represents the size of the cube $K$ in which the holes are located. This limit is not singular if one allows the control zone $\Omega_c$ to have intersections with $K$. Actually, all the estimates in Lemmas \ref{lem.Corr1}--\ref{lem.Corr3} and Proposition \ref{prop.RoC.E} are independent of $0<L<1$. Hence, by reproducing the proof of Theorem \ref{thm.main.E}, we conclude the proof of Theorem \ref{thm.main.E'}. 
\end{proof}

Notice that the cases where $1 < \alpha < 3/2$, which corresponds to larger sizes of holes, are not treated by Theorem \ref{thm.main.E'}. For these cases, we have the following result. Define
\begin{equation}\label{def.pD}
    \mathfrak p_D 
    = \mathfrak p_D(\alpha,\beta)
    = \min\Big\{\frac{3-2\beta}{3},\, \frac{\alpha-1}{2},\, 3-\alpha,\, 6-2\alpha-2\beta\Big\}. 
\end{equation}

\begin{theorem}\label{thm.main.D}
Let $1<\alpha<3$ and choose $\beta>0$ so that $\beta<\min\{3/2,3-\alpha\}$, which ensures that $\mathfrak p_D$ in \eqref{def.pD} is positive. For given $0<\eta<1$, we define 
\begin{equation}\label{def.eps_D}
    \eps_0 = \frac{1}{\lfloor \eta^{-2/{\mathfrak p}_D} \rfloor+1}, 
\end{equation}
where $\lfloor\,\cdot\,\rfloor$ refers to the floor function. Then, for any $0 < \eps \le \eps_0$ of the form \eqref{def.eps'} and initial data $U^\eps_0\in L^2_\sigma(\Omega^\eps)$ satisfying
\begin{equation}\label{est1.thm.main.D}
    \|U^\eps_0\|_{L^2(\Omega^\eps)}
    \le
    \eps^{\mathfrak p_D - \beta}, 
\end{equation}
the Lagrangian controllability of \eqref{eq.NS} in the time $\eps^{\alpha+2\beta-3}$ holds: there exists a smooth control forcing $F^\eps$ supported in $\Omega_c^\eps$ with $\|F^\eps\|_{L^{\infty}_t H^s_x} \le C_0 \eps^{-2\beta}$ for any $s\in\N$ 
and a constant $C_0=C_0(s)>0$ depending on $s$
such that 
\begin{equation}\label{est2.thm.main.D}
   \bigcup_{t\in [0,\eps^{\alpha+2\beta-3}]} \Phi^\eps(t,P_0^\eps) \subset\Omega^\eps\setminus\overline{\Omega_{c}^\eps}, \qquad 
    \mathcal{L}(P_1^\eps \setminus \Phi^\eps(\eps^{\alpha+2\beta-3}, P_0^\eps)) 
    \le C \eta, 
\end{equation}
where $\mathcal{L}$ refers to the Lebesgue measure and $\Phi^\eps$ the flow for a corresponding Leray-Hopf weak solution $U^\eps$ to \eqref{eq.NS}. The constant $C>0$ is independent of $\eta$ and $\eps$. 
\end{theorem}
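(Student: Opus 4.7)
The strategy follows the four-step outline of the proof of Theorem \ref{thm.main.E}, with the Euler limit replaced by a Darcy limit: in the parameter range $1<\alpha<3$, $\beta<\min\{3/2,3-\alpha\}$, the rescaled Navier--Stokes equations on $\Omega^\eps$ homogenize in the fully perforated setting to the stationary Darcy equation
\begin{equation*}
\mathcal{R}u_D+\nabla p_D=f_D,\qquad \nabla\cdot u_D=0\quad\text{in }(0,1)\times\T^3;
\end{equation*}
see \cite[Theorem 1.3]{Hof23}.

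\textbf{Scaling and homogenization.} First, perform the rescaling $U^\eps(t,x)=\eps^{-\beta}u^\eps(\eps^{-(\alpha+2\beta-3)}t,x)$, with corresponding scalings of $P^\eps$ and $F^\eps$, which converts the problem on $(0,\eps^{\alpha+2\beta-3})\times\Omega^\eps$ into a rescaled Navier--Stokes system on $(0,1)\times\Omega^\eps$ placed in the Darcy-critical regime. Using the correctors of Section \ref{sec.corr} (which remain valid for $K=\T^3$ since the estimates of Lemmas \ref{lem.Corr1}--\ref{lem.Corr3} are uniform in $L\in(0,1]$), adapt \cite[Proof of Theorem 1.3]{Hof23} to obtain
\begin{equation*}
\|u^\eps(t)-u_D(t)\|_{L^2(\Omega^\eps)}\lesssim\|u^\eps_0\|_{L^2(\Omega^\eps)}+\eps^{\mathfrak p_D},\qquad t\in[0,1].
\end{equation*}
The four exponents composing $\mathfrak p_D$ in \eqref{def.pD} arise, after optimization of the intermediate lengthscale $\eta^\eps\in[\eps^\alpha,\eps]$, from the initial-data-versus-dissipation balance, the Bogovskii correction of Lemma \ref{lem.Corr3}, the resistance mismatch of Lemma \ref{lem.Corr2}, and the residual inertial contribution $u^\eps\cdot\nabla u^\eps$ which, unlike in the Euler case, is only of order $\eps^{6-2\alpha-2\beta}$.

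\textbf{Darcy controllability.} Construct a smooth triplet $(u_D,p_D,f_D)$ with $\opsupp f_D(t,\cdot)\subset\overline{\Omega_c}$ solving the above Darcy system and such that the flow $\phi_D$ of $u_D$ transports $\gamma_0$ to a $C^k$-neighborhood of $\gamma_1$ while keeping its image away from $\Omega_c$. Following Proposition \ref{prop.CoE}, take the potential $\theta$ provided by Lemma \ref{lem.PotFlows.E}, set $u_D:=\nabla\theta-\opbog[\Delta\theta]$, and define $f_D:=\mathcal{R}u_D+\nabla p_D$ with $p_D$ chosen so that $f_D=0$ outside $\overline{\Omega_c}$. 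Outside $\overline{\Omega_c}$ this reduces to a constant-coefficient elliptic problem $\nabla\cdot(\mathcal{R}^{-1}\nabla p_D)=0$ with boundary data derived from $\nabla\theta$ across $\partial\Omega_c$; since the operator $\nabla\cdot(\mathcal{R}^{-1}\nabla\,\cdot\,)$ shares the qualitative features of the Laplacian (constant coefficients, strong ellipticity), a Runge-type approximation theorem in the spirit of Lemma \ref{lem.Runge.E} and the Cauchy--Kowalevsky step of Lemma \ref{lem.PotFlows.E} apply to the associated class of generalized harmonic functions, producing the desired $p_D$. The bound $\|F^\eps\|_{L^\infty_t H^s_x}\le C_0\eps^{-2\beta}$ then follows from the scaling and the smoothness of $f_D$.

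\textbf{Stability, conclusion, and main obstacle.} The Gr\"onwall argument of Lemma \ref{lem.RoC.phiE} and Markov's inequality of Lemma \ref{lem.Markov.E} yield $\mathcal{L}(\{x\in\Omega^\eps:|\phi^\eps(t,x)-\phi_D(t,x)|>\eta\})\lesssim\eta$, after which the measure decomposition at the end of the proof of Theorem \ref{thm.main.E} gives \eqref{est2.thm.main.D}, with the additional contribution $\mathcal{L}(P_1\setminus P_1^\eps)\lesssim\eps^{3(\alpha-1)}$ absorbed by $\eta$ through the definition of $\eps_0$. The main obstacle is the Darcy controllability step: since Darcy is elliptic in space, the velocity at time $t$ is determined instantaneously by $f_D(t,\cdot)$, so transport through regions where the forcing vanishes must be produced entirely through the nonlocal pressure coupling rather than through inertia. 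The potential-flow construction nonetheless goes through because, outside $\overline{\Omega_c}$, the Darcy equation with constant symmetric positive definite $\mathcal{R}$ reduces to a single constant-coefficient elliptic equation for $p_D$, which preserves the qualitative framework that makes the Glass--Horsin argument of Section \ref{sec.Limit.E} work.
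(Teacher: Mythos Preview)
Your overall strategy—rescale into the Darcy regime, homogenize via \cite[Theorem 1.3]{Hof23}, prove Lagrangian controllability of the Darcy limit, and close with a weak--strong flow estimate—matches the paper's. However, there are concrete gaps in the execution.

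\emph{Scaling and homogenization norm.} The velocity factor must match the time factor so that the flow maps correspond: the paper uses $U^\eps(t,x)=\eps^{-(\alpha+2\beta-3)}u^\eps(\eps^{-(\alpha+2\beta-3)}t,x)$ (see \eqref{eq.UPF_D}), not $\eps^{-\beta}$. With your choice the relation $\phi^\eps(\tau,x)=\Phi^\eps(\eps^{\alpha+2\beta-3}\tau,x)$ fails, and the rescaled nonlinearity keeps an $O(1)$ coefficient instead of $\eps^{6-2\alpha-2\beta}$ as in \eqref{eq.NS_fc_D}. Separately, you claim a pointwise-in-time bound $\|u^\eps(t)-u_D(t)\|_{L^2}\lesssim\|u^\eps_0\|_{L^2}+\eps^{\mathfrak p_D}$, but in the Darcy regime the energy method (combined with the Poincar\'e inequality \eqref{est.Poincare}) only delivers the $L^2_tL^2_x$ estimate of Proposition~\ref{prop.RoC.D}, with an extra prefactor $\eps^{3-\alpha-\beta}$ on the initial data. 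The paper handles this by a Cauchy--Schwarz in time when passing to the flow estimate (Lemma~\ref{lem.RoC.phiD}); your sketch elides this distinction.

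\emph{Darcy controllability.} This is the main structural gap. You take the \emph{harmonic} $\theta$ of Lemma~\ref{lem.PotFlows.E}, fix $u_D=\nabla\theta-\opbog[\Delta\theta]$, and then seek $p_D$ with $\mathcal{R}u_D+\nabla p_D=0$ outside $\overline{\Omega_c}$. That is overdetermined: outside $\overline{\Omega_c}$ it forces $\nabla p_D=-\mathcal{R}\nabla\theta$, but $\mathcal{R}\nabla\theta$ is curl-free only when $\mathcal{R}$ is scalar. The paper's construction (Lemma~\ref{lem.PotFlows.D} and Proposition~\ref{prop.CoD}) avoids this by \emph{not} starting from a harmonic potential: it runs the Cauchy--Kowalevsky and Runge steps directly for the operator $\nabla\cdot(\mathcal{R}^{-1}\nabla\,\cdot\,)$ (Lemma~\ref{lem.Runge.D}), producing a $\theta$ with $\opsupp\nabla\cdot(\mathcal{R}^{-1}\nabla\theta)\subset\overline{\Omega_c}$; then $v_D=\mathcal{R}^{-1}\nabla\theta$ and $p_D=-\theta$ satisfy $\mathcal{R}v_D+\nabla p_D=0$ identically, and the Bogovski\u{\i} correction supplies $f_D=-\mathcal{R}w_D$ supported in $\overline{\Omega_c}$. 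You do invoke Runge and Cauchy--Kowalevsky for the $\mathcal{R}$-elliptic operator, so the right tools are in your hands, but the order of construction is reversed: one must build $\theta$ (and hence $u_D$) \emph{from} the $\mathcal{R}$-elliptic approximation, rather than fix $u_D$ first and hope a compatible $p_D$ exists.
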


\begin{remark}\label{rem.thm.main.D}
\begin{enumerate}[(i)]
\item\label{item1.rem.thm.main.D}
The definition of the exponent $\mathfrak p_D$ in \eqref{def.pD} is based on the rate of convergence in Proposition \ref{prop.RoC.D} in Section \ref{sec.Homogenization.D}, which is due to \cite[Theorem 1.3]{Hof23}. However, the exponent suggested by \cite[Theorem 1.3]{Hof23} is slightly different and given by
\[
    \min\Big\{\frac{3-2\beta}{3},\, \frac{\alpha-1}{2},\, \frac{9-3\alpha}{2},\, 6-2\alpha-2\beta\Big\}.
\]
The difference is due to our correction of \cite[(3.16)]{Hof23}: $\eps^{9-3\alpha}$ is corrected to $\eps^{6-2\alpha}$.

\item\label{item2.rem.thm.main.D}
It is not clear to the authors whether the cases where $1 < \alpha < 3/2$ can be treated in partially perforated domains in Section \ref{sec.LC.E}, since the proof of Theorem \ref{thm.main.D} is based on the Poincaré inequality \eqref{est.Poincare} only valid in fully perforated domains. Indeed an energy estimate does not work, at least directly, in the absence of \eqref{est.Poincare}.

\item\label{item3.rem.thm.main.D}
As in Remark \ref{rem.thm.main.E} (\ref{item5.rem.thm.main.E}), one can ask ``the best choice" of $\beta$ when optimizing the trio of the cost (the size of control forcings), the time, and the size of initial data. 
\end{enumerate}
\end{remark}

    \subsubsection*{Outlined proof of Theorem \ref{thm.main.D}.}

We outline the proof of Theorem \ref{thm.main.D} according to the four steps introduced in the outlined proof of Theorem \ref{thm.main.E}. However, the scaling in Step 1 is just a replacement of \eqref{eq.UPF_E} with 
\begin{equation}\label{eq.UPF_D}
\begin{split}
    U^\eps(t,x) 
    &= \eps^{-(\alpha+2\beta-3)} u^\eps(\eps^{-(\alpha+2\beta-3)}t,x), \\
    P^\eps(t,x) 
    &= \eps^{-2\beta} p^\eps(\eps^{-(\alpha+2\beta-3)}t,x), \\
    F^\eps(t,x) 
    &= \eps^{-2\beta} f^\eps(\eps^{-(\alpha+2\beta-3)}t,x). 
\end{split}
\end{equation}
In addition, the homogenization in Step 2 just uses the same corrector and the energy estimates as in \cite{Hof23}. Thus we omit Steps 1 and 2, and describe Steps 3 and 4. 
\begin{enumerate}[Step 1.]
\setcounter{enumi}{2}
\item 
\textbf{Stability.} 
By a weak-strong stability argument for flows, we convert the convergence rate obtained in Step 2 between $u^\eps$ and $u_D$  with the estimate between $\phi^\eps$ and $\phi_D$, where $\phi_D$ refers to the flow of $u_D$ obtained in Step 4 below.

\item
\textbf{Limit equations.} 
We construct a smooth triplet $(u_D, p_D, f_D)$ solving the Darcy equations in $\T^3$ such that the initial zone $P_0$ moves by $u_D$ to the zone $P_1$, up to arbitrarily small error in the measure. Moreover, we take $f^{\eps}$ by $f^\eps := f_D|_{\Omega^\eps}$ and define the control forcing $F^\eps$ through \eqref{eq.UPF_D}. Our construction of $(u_D, p_D, f_D)$ is new. Unlike the proof of Theorem \ref{thm.main.E}, a simple adaptation of Glass-Horsin \cite{GlaHor12} does not suffice. We apply Runge type approximations for elliptic equations in \cite{Bro61} and Cauchy–Kowalevsky type theorems for equations with analytic coefficients.
\end{enumerate}

For the same reason as in Section \ref{sec.LC.E}, we do not pursue the Lagrangian controllability by an Euler-Brinkman homogenization in fully perforated domains.

    \paragraph{Notation}

In the rest of this section, we will adopt the same notation as in Section \ref{sec.LC.E}.

    \subsection{Controllability of Darcy equations in torus}
    \label{sec.Limit.D}

In this section, let $A = (A_{jk})\in\R^{3\times3}$ denote a positive definite symmetric matrix. We will prove the following remote controllability of the Darcy equations in $\T^3$.

\begin{proposition}\label{prop.CoD}
Assume that Jordan surfaces $\gamma_0$ and $\gamma_1$ are isotopic in $\T^3\setminus\overline{\Omega_{c}}$ and surround the same volume. For any $\eta>0$ and $k\in\N$, there exist smooth $(u_D, p_D, f_D)$ and $d_0>0$ satisfying 
\begin{equation}\label{eq1.prop.CoD}
    \left\{
    \begin{array}{ll}
    A u_D + \nabla p_D = f_D &\mbox{in}\ (0,1)\times\T^3,\\
    \nabla\cdot u_D = 0 &\mbox{in}\ (0,1)\times\T^3,\\
    u_D=0 &\mbox{on}\ \{0\}\times\T^3
    \end{array}\right.
\end{equation}
and the conditions 
\begin{gather}
    \opsupp f_D(t,\cdot) \subset \overline{\Omega_c}, 
    \quad t\in(0,1); \label{item1.prop.CoD} \\
    \opdist(\phi_D(t,\gamma_0),\Omega_c)\geq d_0, 
    \quad t\in [0,1]; \label{item2.prop.CoD} \\
    \|\phi_D(1,\gamma_0)-\gamma_1\|_{C^k(\S^2)}<\eta, \label{item3.prop.CoD}
\end{gather}
up to reparameterization. Here $\phi_D$ denotes the flow of $u_D$. 
\end{proposition}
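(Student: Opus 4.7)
The plan is to follow the same structure as the proof of Proposition~\ref{prop.CoE}, replacing the Laplacian by the constant-coefficient elliptic operator $L:=\nabla\cdot(A^{-1}\nabla)=A^{-1}_{ij}\partial_i\partial_j$, which is elliptic because $A^{-1}$ is positive-definite symmetric. The key observation is that if one writes $u_D=A^{-1}\nabla\theta$ and $p_D=-\theta$, then $Au_D+\nabla p_D\equiv 0$ automatically, so the only remaining requirements on $\theta$ are that $\nabla\cdot u_D=L\theta$ be supported in $\overline{\Omega_c}$ (to guarantee that $f_D$ is supported in $\overline{\Omega_c}$) and that the associated flow transport $\gamma_0$ into a small neighborhood of $\gamma_1$ while staying away from $\Omega_c$.

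\textbf{Darcy analogue of Lemma~\ref{lem.PotFlows.E}.} I would first establish a smooth $\theta$ on $(0,1)\times\T^3$ with $\opsupp L\theta(t,\cdot)\subset\overline{\Omega_c}$, together with the natural analogues of \eqref{item2.lem.PotFlows.E}--\eqref{item3.lem.PotFlows.E} for the flow $\phi^{A^{-1}\nabla\theta}$. The construction reproduces the four steps in the proof of Lemma~\ref{lem.PotFlows.E}, with two substitutions. First, the Cauchy--Kowalevsky step is applied to the non-characteristic Cauchy problem $L\psi(t,\cdot)=0$ with data $\psi|_{\gamma(t)}=0$ and $(A^{-1}\nabla\psi)\cdot\nu=X\cdot\nu$ on the real-analytic surface $\gamma(t):=\phi^X(t,\gamma_0)$, where $X$ is produced by Lemma~\ref{lem.KryGlaHor}; non-characteristicity is automatic since $\nu\cdot A^{-1}\nu>0$. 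Extending across $\opint(\gamma(t))$ via the interior Neumann problem for $L$ then yields $\psi\in C([0,1];C^\infty(\mathcal{V}_\xi[\opint(\gamma(t))]))$. Second, Lemma~\ref{lem.Runge.E} is replaced by a Runge-type approximation for $L$ based on Browder~\cite{Bro61}: if $K\cap\Omega_c=\emptyset$ and $\T^3\setminus K$ is connected, any $L$-solution on a neighborhood of $K$ can be $C^k$-approximated on $K$ by a smooth global function $\hat\psi$ on $\T^3$ with $\opsupp L\hat\psi\subset\overline{\Omega_c}$. The remaining gluing via a partition of unity in time and the Gronwall stability of flows are unchanged.

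\textbf{Assembly of $(u_D,p_D,f_D)$.} With such $\theta$ in hand, I would set $v_D:=A^{-1}\nabla\theta$ and $g_D:=L\theta$, which is smooth and supported in $\overline{\Omega_c}$. As in the proof of Proposition~\ref{prop.CoE}, the identity $\int_{\T^3}g_D=0$ is automatic on the torus, and applying the divergence theorem to the vector field $A^{-1}\nabla\theta$ on $\T^3\setminus\overline{\Omega_c}$ gives $\int_{\Omega_c}g_D=0$. Hence $w_D(t,\cdot):=\opbog[g_D(t,\cdot)]$ is well defined, smooth, and supported in $\overline{\Omega_c}$ with $\nabla\cdot w_D=g_D$. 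Setting
\[
    u_D:=v_D-w_D,\qquad p_D:=-\theta,\qquad f_D:=Au_D+\nabla p_D=-Aw_D,
\]
one readily verifies that $u_D$ is smooth and divergence-free with $u_D(0,\cdot)=0$, that $p_D$ is smooth, and that $f_D$ is smooth and supported in $\overline{\Omega_c}$. Since $u_D$ coincides with $v_D=A^{-1}\nabla\theta$ on $\T^3\setminus\overline{\Omega_c}$ and the flow $\phi^{A^{-1}\nabla\theta}$ stays at distance at least $d_0$ from $\Omega_c$ by construction, the flow $\phi_D$ of $u_D$ inherits \eqref{item2.prop.CoD}--\eqref{item3.prop.CoD}.

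\textbf{Main obstacle.} I expect the only non-routine step to be the Runge-type approximation for $L$ with the prescribed support condition on $L\hat\psi$: Lemma~\ref{lem.Runge.E} rested on Bagby--Blanchet's Runge theorem for genuine harmonic functions on $\T^3$ with pole at a single point in $\Omega_c$, whereas the Darcy setting requires the more general result of Browder~\cite{Bro61} for a positive-definite constant-coefficient elliptic operator, together with a cut-off argument parallel to the one in the proof of Lemma~\ref{lem.Runge.E} to enforce $\opsupp L\hat\psi\subset\overline{\Omega_c}$. Everything else---the constant-coefficient Cauchy--Kowalevsky construction, the Bogovskii correction, and the Gronwall stability of flows---transfers from Section~\ref{sec.Limit.E} with only cosmetic modifications.
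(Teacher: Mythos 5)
Your proposal follows the same overall strategy as the paper's: the potential-flow ansatz $v_D = A^{-1}\nabla\theta$, $p_D = -\theta$ with a Bogovskii corrector $w_D=\opbog[g_D]$ supported in $\overline{\Omega_c}$, Browder's Runge theorem for the constant-coefficient elliptic operator $L=\nabla\cdot(A^{-1}\nabla)$ in place of Bagby--Blanchet, a partition of unity in time, and the flux identities $\int_{\T^3}g_D=\int_{\Omega_c}g_D=0$. These are exactly the ingredients in the paper's proofs of Proposition~\ref{prop.CoD}, Lemma~\ref{lem.PotFlows.D} and Lemma~\ref{lem.Runge.D}, with $f_D=-Aw_D$.

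There is, however, one genuine misstep in the Cauchy--Kowalevsky paragraph. You propose to solve the Cauchy problem for $L$ with data $\psi|_{\gamma(t)}=0$ and $(A^{-1}\nabla\psi)\cdot\nu=X\cdot\nu$ on $\gamma(t)$, and then ``extend across $\opint(\gamma(t))$ via the interior Neumann problem.'' This does not work as described. The Cauchy--Kowalevsky solution is only defined in a thin analytic collar around $\gamma(t)$ and in general does not continue into the whole interior (already for $L=\Delta$ on a disk, prescribing $\psi=0$ and an analytic normal derivative on the circle produces $r^{-|n|}$ modes, so the collar solution blows up before reaching the center). On the other hand, the solution of the interior Neumann problem for $L$ generically has a nonzero Dirichlet trace on $\gamma(t)$, so it cannot be glued to your collar solution with $\psi|_{\gamma(t)}=0$. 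The correct order, used in the paper following Glass--Horsin \cite[Lemma 2.4]{GlaHor12}, is the reverse: first solve the interior Neumann problem $L\psi=0$ in $\opint(\gamma(t))$, $(A^{-1}\nabla\psi)\cdot\nu=X\cdot\nu$ on $\gamma(t)$ (no Dirichlet prescription); then, since $\gamma(t)$, $A^{-1}$, and the boundary data are real analytic, a Cauchy--Kowalevsky type theorem extends \emph{that} solution across $\gamma(t)$ to a neighborhood $U_t$; finally, unique continuation ensures $L\psi=0$ persists on $U_t$. With this correction, and without the spurious condition $\psi|_{\gamma(t)}=0$, your argument coincides with the paper's.
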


In the proof, we need Lemma \ref{lem.PotFlows.D} below, whose proof is postponed for a moment.

\begin{lemma}\label{lem.PotFlows.D}
Assume that Jordan surfaces $\gamma_0$ and $\gamma_1$ are isotopic in $\T^3\setminus\overline{\Omega_c}$ and surround the same volume. For any $\eta>0$ and $k\in\N$, there exist smooth $\theta$ supported in $(0,1)\times\T^3$ and $d_0>0$ satisfying the conditions 
\begin{gather}
    \opsupp \nabla\cdot(A^{-1}\nabla\theta(t,\cdot)) \subset \overline{\Omega_c}, \quad t\in(0,1); \label{item1.lem.PotFlows.D} \\
    \opdist(\phi^{\nabla\theta}(t,\gamma_0),\Omega_c)\geq d_0, \quad t\in[0,1]; \label{item2.lem.PotFlows.D} \\
    \|\phi^{\nabla\theta}(1,\gamma_0)-\gamma_1\|_{C^k(\S^2)}<\eta, \label{item3.lem.PotFlows.D}
\end{gather}
up to reparameterization. Here $\phi^{\nabla \theta}$ denotes the flow of $\nabla\theta$. 
\end{lemma}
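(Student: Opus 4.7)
\begin{proofx}{Lemma \ref{lem.PotFlows.D}}

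The plan is to follow the scheme of the proof of Lemma \ref{lem.PotFlows.E} with the Laplacian $\Delta$ systematically replaced by the second-order constant-coefficient operator $L:=\nabla\cdot(A^{-1}\nabla\,\cdot\,) = \sum_{j,k} A^{-1}_{jk}\partial_{j}\partial_{k}$. Since $A^{-1}$ is constant and positive definite, $L$ is uniformly elliptic with analytic coefficients, so Cauchy-Kowalevsky and unique continuation for $L$ apply exactly as for $\Delta$. The key new ingredient is a Runge-type approximation theorem for solutions of $L\psi=0$, which is provided by Browder \cite{Bro61} and plays the role of the Bagby-Blanchet theorem used in Lemma \ref{lem.Runge.E}.

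As in the proof of Lemma \ref{lem.PotFlows.E}, I first reduce to the case where $\gamma_{0},\gamma_{1}$ are real analytic and there exists a divergence-free real analytic vector field $X\in C^{0}([0,1];C^{\omega}(\T^{3})^{3})$ whose flow $\phi^{X}$ is a volume-preserving analytic isotopy with $\phi^{X}(1,\gamma_{0})=\gamma_{1}$ and $\opdist(\phi^{X}(t,\gamma_{0}),\Omega_{c})\ge 2d_{0}$; this uses Lemma \ref{lem.KryGlaHor} together with Whitney's approximation theorem. Writing $\gamma(t):=\phi^{X}(t,\gamma_{0})$, I construct for each $t\in[0,1]$ a function $\psi(t,\cdot)\in C^{\infty}(\mathcal{V}_{\xi}[\opint(\gamma(t))])$ satisfying $L\psi(t,\cdot)=0$ in $\mathcal{V}_{\xi}[\opint(\gamma(t))]$ and $\partial_{\nu}\psi=X\cdot\nu$ on $\gamma(t)$. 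The first step is to solve the interior Neumann problem for $L$ in $\opint(\gamma(t))$; solvability requires $\int_{\gamma(t)} X\cdot\nu\,\dd\sigma=0$, which follows from $\nabla\cdot X=0$ and the divergence theorem. Since $L$ has constant coefficients and $\gamma(t), X$ are analytic, the solution is analytic up to $\gamma(t)$ and extends across $\gamma(t)$ via the Cauchy-Kowalevsky theorem ($\gamma(t)$ is non-characteristic for $L$ thanks to ellipticity). Because the normal components of $\nabla\psi$ and $X$ coincide on $\gamma(t)$, the flow of $\nabla\psi$ carries $\gamma_{0}$ onto $\gamma(t)$ as an unparameterized surface, exactly as in the argument underlying \cite[Lemma 2.4]{GlaHor12}.

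Next, I cover $[0,1]$ by finitely many intervals $(t_{i}-\delta_{i},t_{i}+\delta_{i})$ and set $\psi_{i}:=\psi(t_{i},\cdot)$ and $K_{i}:=\overline{\mathcal{V}_{\xi/2}[\opint(\gamma(t_{i}))]}$ as in \eqref{est2.prf.lem.PotFlows.E}--\eqref{est4.prf.lem.PotFlows.E}, shrinking $\xi$ if necessary so that $K_{i}\cap\Omega_{c}=\emptyset$ and $\T^{3}\setminus K_{i}$ is connected. Browder's Runge theorem for $L$ then yields $\tilde{\psi}_{i}$ on $\T^{3}\setminus\{p_{i}\}$ for some $p_{i}\in\Omega_{c}$, with $L\tilde{\psi}_{i}=0$ on $\T^{3}\setminus\{p_{i}\}$ and $\|\psi_{i}-\tilde{\psi}_{i}\|_{L^{\infty}(K_{i})}<\tilde{\eta}$; Schauder estimates for $L$ upgrade this to $\|\psi_{i}-\tilde{\psi}_{i}\|_{C^{k+2}(K_{i})}<\tilde{\eta}$ after a further shrinkage of $\tilde{\eta}$. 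A cutoff $\chi\in C^{\infty}(\T^{3})$ equal to $1$ on $\T^{3}\setminus\Omega_{c}$ and vanishing in a neighborhood of $p_{i}$ gives $\hat{\psi}_{i}:=\chi\tilde{\psi}_{i}\in C^{\infty}(\T^{3})$ with $\opsupp L\hat{\psi}_{i}\subset\overline{\Omega_{c}}$. Setting $\theta(t,x):=\sum_{i}\chi_{i}(t)\hat{\psi}_{i}(x)$ for a partition of unity $\{\chi_{i}\}$ subordinate to the time covering, the linearity of $L$ in $x$ gives $\opsupp L\theta(t,\cdot)\subset\overline{\Omega_{c}}$, that is, \eqref{item1.lem.PotFlows.D}. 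The conclusions \eqref{item2.lem.PotFlows.D} and \eqref{item3.lem.PotFlows.D} then follow by the Gr\"{o}nwall argument \eqref{est7.prf.lem.PotFlows.E}--\eqref{est9.prf.lem.PotFlows.E} applied to $\nabla\theta$ and $\nabla\psi$.

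The main obstacle is the replacement of Bagby-Blanchet by an analogous Runge-type approximation for $L$-harmonic functions on $\T^{3}$. Browder \cite{Bro61} establishes such approximation for general elliptic operators with analytic coefficients under the topological assumption that the complement of the compact set is connected, which is arranged here by shrinking $\xi$. Once this ingredient is in hand, the remaining pieces — interior Neumann solvability, Cauchy-Kowalevsky extension across the analytic surface $\gamma(t)$, Schauder regularity, and Gr\"{o}nwall stability of the flow — are either standard or transcribe directly from the proof of Lemma \ref{lem.PotFlows.E}.

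\end{proofx}
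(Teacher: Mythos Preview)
Your proposal is correct and follows essentially the same approach as the paper: reduce to analytic data via Lemma \ref{lem.KryGlaHor} and Whitney approximation, solve the interior Neumann problem for $L=\nabla\cdot(A^{-1}\nabla)$ and extend across $\gamma(t)$ by Cauchy--Kowalevsky, then invoke a Runge-type theorem for $L$ (Browder) and glue via a partition of unity and the Gr\"onwall argument. The only minor difference is in the Runge step: you take the singular set to be a single point $\{p_i\}$, mimicking Lemma \ref{lem.Runge.E}, whereas the paper's Lemma \ref{lem.Runge.D} instead removes an open set $\Omega_1\Subset\Omega_c$ and embeds $\T^3\setminus\Omega_1$ in $\R^4$ to fit Browder's \cite[Theorem 3.22]{Bro61} hypotheses directly; both variants are valid.
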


\begin{proofx}{Proposition \ref{prop.CoD}}
Let $\eta>0$ and $k\in\N$ be given. Take $\theta$ in Lemma \ref{lem.PotFlows.D} and define
\[
    v_D = A^{-1} \nabla \theta, 
    \qquad
    q_D = -\theta, 
    \qquad
    g_D = \nabla\cdot (A^{-1} \nabla \theta).
\]
Then, we see that $(v_D,q_D)$ satisfies the Darcy equations with inhomogeneous divergence
\begin{equation}\label{eq1.prf.prop.CoD}
    \left\{
    \begin{array}{ll}
    A v_D + \nabla q_D = 0 &\mbox{in}\ (0,1)\times\T^3,\\
    \nabla\cdot v_D = g_D &\mbox{in}\ (0,1)\times\T^3,\\
    v_D=0 &\mbox{on}\ \{0\}\times\T^3. 
    \end{array}\right.
\end{equation}
By the same reasoning as in the proof of Lemma \ref{prop.CoE}, we see that a Bogovskii operator $\opbog=\opbog[\,\cdot\,]$ is applicable to $g_D$ and $w_D:=\opbog[g_D]$ satisfies 
\begin{equation}\label{eq2.prf.prop.CoD}
    \opsupp w_D \subset \overline{\Omega_c},
    \qquad
    \nabla\cdot w_D = g_D.
\end{equation}

Now define $u_D = v_D - w_D$, $p_D = q_D$, and
\[
    f_D = -A w_D.
\]
Then $(u_D,p_D,f_D)$ satisfies the properties in Proposition \ref{prop.CoD} thanks to \eqref{eq1.prf.prop.CoD}--\eqref{eq2.prf.prop.CoD}.
\end{proofx}

The rest of this section focuses on proving Lemma \ref{lem.PotFlows.D}. We apply the lemma below.

\begin{lemma}\label{lem.Runge.D}
Let $K$ be a closed subset of $\T^3$ such that $K\cap \Omega_c=\emptyset$ and $\T^3\setminus K$ is connected. Then, for any distribution 
$\varphi$ satisfying $\nabla\cdot(A^{-1}\nabla\varphi) = 0$ in a neighborhood of $K$, $\eta>0$ and $k\in\N$, there exists smooth $\psi$ on $\T^3$ such that $\opsupp \nabla\cdot(A^{-1}\nabla\psi) \subset \overline{\Omega_c}$ and $\|\varphi-\psi\|_{C^{k}(K)}<\eta$.
\end{lemma}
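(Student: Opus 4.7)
My plan is to mirror the proof of Lemma \ref{lem.Runge.E} line-by-line, replacing the Laplacian everywhere by the constant-coefficient elliptic operator $L=\nabla\cdot(A^{-1}\nabla\,\cdot\,)$. Since $A$ is a constant positive definite symmetric matrix, $L$ is a second-order elliptic operator with constant coefficients (its principal symbol at $\xi$ is $\langle A^{-1}\xi,\xi\rangle>0$ for $\xi\neq 0$); in particular, any distribution $\varphi$ satisfying $L\varphi=0$ in a neighborhood of $K$ is automatically real analytic there by elliptic regularity, so speaking of $\|\varphi-\psi\|_{C^k(K)}$ makes sense.

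First I would fix a point $p\in\Omega_c$ and an open set $\Omega_1\subset\T^3$ with $p\in\Omega_1\Subset\Omega_c$. Next, since $K\cap\Omega_c=\emptyset$ and $\T^3\setminus K$ is connected, I would invoke a Runge-type approximation theorem for $L$ on $\T^3$—either Browder's Runge theorem \cite{Bro61} applied to the constant-coefficient operator $L$, or equivalently Bagby–Blanchet \cite{BagBla94} for harmonic functions after the global linear change of variable $y=A^{1/2}x$ (which maps $L$ to $\Delta_y$ and $\T^3=\R^3/\Z^3$ to the torus $\R^3/A^{1/2}\Z^3$)—to produce $\tilde{\psi}$ defined on $\T^3\setminus\{p\}$ with $L\tilde\psi=0$ and $\|\varphi-\tilde\psi\|_{L^\infty(K)}<\eta$. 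Schauder estimates for the elliptic operator $L$ on the neighborhood of $K$ where $L(\varphi-\tilde\psi)=0$ then upgrade the $L^\infty$ approximation to $\|\varphi-\tilde\psi\|_{C^k(K)}<\eta$, possibly after slightly shrinking the approximation parameter.

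Finally I would choose a smooth cut-off $\chi\in C^\infty(\T^3)$ with $\chi\equiv 1$ on $\T^3\setminus\Omega_c$ and $\chi\equiv 0$ on $\Omega_1$, and set $\psi=\chi\tilde\psi$. This is smooth on $\T^3$ because the singularity of $\tilde\psi$ at $p$ is killed by $\chi$. On $\T^3\setminus\overline{\Omega_c}$ we have $\psi=\tilde\psi$, hence $L\psi=0$; on $\Omega_1$ we have $\psi=0$, hence $L\psi=0$; thus $\opsupp L\psi\subset\overline{\Omega_c\setminus\Omega_1}\subset\overline{\Omega_c}$. Moreover, since $K\cap\overline{\Omega_c}=\emptyset$, $\psi=\tilde\psi$ on $K$, and the $C^k(K)$ bound is inherited.

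The main obstacle is the availability and form of the Runge-type theorem for $L$ on the torus. This is not quite the setting treated by Bagby–Blanchet (which is phrased for the Laplacian) nor by Browder (which is phrased on open subsets of $\R^n$), so I would either justify transferring Bagby–Blanchet across the linear change of variables $y=A^{1/2}x$—verifying that the pole at $p$ is preserved and that the connectedness of the complement is unaffected—or carefully adapt the classical Browder/Lax–Malgrange duality argument directly on $\T^3$, using that $L$ has constant coefficients and that $\T^3\setminus K$ is connected so that the obstruction to $L^*$-harmonic extension from $K$ reduces to a single point. Everything else is routine elliptic regularity plus a cut-off.
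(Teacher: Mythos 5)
Your proposal is correct, and it offers a genuine alternative to the paper's argument. The paper proves Lemma \ref{lem.Runge.D} by invoking Browder's Runge theorem \cite[Theorem 3.22]{Bro61} directly for the operator $\nabla\cdot(A^{-1}\nabla\,\cdot)$; to verify the topological Runge hypothesis it embeds $\mathcal{K}=\T^3\setminus\Omega_1$ into $\R^4$ and checks that an ambient open neighborhood $G$ satisfies that $G\setminus\mathcal{K}$ has no components with compact closure in $G$. The resulting $\tilde\psi$ is defined on all of $\T^3\setminus\Omega_1$ (an open set deleted), after which the same Schauder upgrade and cut-off as in Lemma \ref{lem.Runge.E} finish the proof.

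Your primary route instead conjugates $L$ to the Laplacian via the global linear change of variables $y=A^{1/2}x$, which sends $\T^3=\R^3/\Z^3$ to the flat torus $\R^3/A^{1/2}\Z^3$, and then applies Bagby--Blanchet \cite[Theorem 9.2]{BagBla94}, exactly as the paper already does in Lemma \ref{lem.Runge.E}, obtaining $\tilde\psi$ defined on the punctured torus with $L\tilde\psi=0$. A quick computation confirms the conjugation: with $\tilde\varphi(y)=\varphi(A^{-1/2}y)$ one has $\nabla_x\cdot(A^{-1}\nabla_x\varphi)=\Delta_y\tilde\varphi$, and since $A^{1/2}$ is a linear diffeomorphism, connectedness of the complement and the location of the pole $p\in\Omega_c$ transfer harmlessly. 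Because Bagby--Blanchet is formulated for Riemannian manifolds, it applies to the new flat torus just as it did to $\T^3$ in Lemma \ref{lem.Runge.E}. The advantage of your route is that it reduces the Darcy lemma to the already-cited Laplacian reference and sidesteps any discussion of applying Browder's theorem off $\R^n$ (and in particular the $\R^4$ embedding device); the paper's route avoids the change of variables and keeps the argument phrased intrinsically in the operator $L$. Your cut-off step and the Schauder upgrade from $L^\infty(K)$ to $C^k(K)$ coincide with the paper's. You also flag the Browder/Lax--Malgrange duality route as an alternative, which is essentially what the paper does, so you have correctly identified where the nontrivial input lies.
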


\begin{proof}
According to Schauder estimates, $\varphi$ is smooth in the neighborhood of $K$. Fix an open subset $\Omega_1$ of $\T^3$ such that $\Omega_1 \Subset \Omega_c$. Let $\mathcal{K}:=\T^3\setminus \Omega_1$ be embedded in the Euclidean space $\R^4$ and take an open neighborhood $G$ in $\R^4$. Then $G\setminus \mathcal{K}$ has no components with compact closure in $G$. By the Runge theorem in Browder \cite[Theorem 3.22]{Bro61}, there exists a function $\tilde{\psi}$ defined on $\T^3\setminus \Omega_1$ such that $\nabla\cdot (A^{-1}\nabla\tilde{\psi})=0$ in $\T^3\setminus \Omega_1$ and $\|\varphi-\tilde{\psi}\|_{L^{\infty}(K)}<\eta$. By Schauder estimates applied to $\varphi-\tilde{\psi}$, it is in fact possible to improve the topology of approximation to $\|\varphi-\tilde{\psi}\|_{C^k(K)}<\eta$. Now take a smooth cut-off function $\chi$ on $\T^3$ such that $\chi=1$ on $\T^3\setminus \Omega_c$ and $\chi=0$ on $\Omega_1$. Then $\psi:=\chi \tilde{\psi}$ satisfies the desired properties.
\end{proof}

\begin{proofx}{Lemma \ref{lem.PotFlows.D}}
As in the proof of Lemma \ref{lem.PotFlows.E}, we can assume that $\gamma_0, \gamma_1$ are real analytic in $\T^3$ and that there exists a real analytic vector field $X\in C^0([0,1]; C^\omega(\T^3)^3)$ such that $\nabla\cdot X = 0$ and the flow $\phi^X$ for $X$ satisfies $\phi^X(1,\gamma_0) = \gamma_1$ and \eqref{est0.prf.lem.PotFlows.E} for some $d_0>0$. Then, as described in the proof of Lemma \ref{lem.PotFlows.E}, the combination of an approximation procedure adopting Whitney’s approximation theorem and Lemma \ref{lem.KryGlaHor} provides the conclusions of Lemma \ref{lem.PotFlows.D}. Notice that stability of flows can be verified by Gr\"{o}nwall's inequality even if $\Delta$ is replaced with $\nabla\cdot (A^{-1}\nabla)$ since $A$ is a positive definite symmetric matrix.

Let $\eta>0$ and $k\in\N$ be given and set $\gamma(t) = \phi^X(t,\gamma_0)$ for $t\in[0,1]$. Fix $t\in[0,1]$. In the same manner as in \cite[Proof of Lemma 2.4]{GlaHor12}, one can find a solution $\psi$ in $H^2$ of 
\[   
    \begin{split}
    \left\{
    \begin{array}{ll} 
    \nabla\cdot (A^{-1}\nabla \psi) = 0 &\mbox{in}\ \opint(\gamma(t)), \\ [2pt] 
    \displaystyle{\frac{\partial \psi}{\partial \nu}} = X \cdot \nu &\mbox{on}\ \gamma(t) 
    \end{array}\right.
    \end{split}
\]
such that $\psi$ can be analytically extended to a connected open neighborhood $U_t$ of $\gamma(t)$. This is essentially due to the Cauchy–Kowalevsky type theorem \cite[Thoerem 5.7.1']{Mor66book}. Then, by the unique continuation of $\nabla\cdot (A^{-1}\nabla \psi)$ on $U_t$, one concludes that $\nabla\cdot (A^{-1}\nabla \psi)$ vanish everywhere on $U_t$. The construction of $\psi$ above can be made uniform in time, by using the compactness of the time interval $[0,1]$, that is, there exists $\xi>0$  and 
\[
    \psi\in C
    \big(
    [0, 1]; C^\infty(\mathcal{V}_{\xi}[\opint(\gamma(t))];\R)
    \big)
\]
such that, for each $t\in[0,1]$, 
\[
    \begin{split}
    \left\{
    \begin{array}{ll} 
    \nabla\cdot (A^{-1}\nabla \psi) = 0 &\mbox{in}\ \mathcal{V}_\xi[\opint(\gamma(t))], \\ [2pt] 
    \displaystyle{\frac{\partial \psi}{\partial \nu}} = X \cdot \nu &\mbox{on}\ \gamma(t). 
    \end{array}\right.
    \end{split}
\]
Then the same argument as in the proof of Lemma \ref{lem.PotFlows.E}, but with an application of Lemma \ref{lem.Runge.D}, asserts the existence of $\theta$. This completes the proof of Lemma \ref{lem.PotFlows.D}. 
\end{proofx}

    \subsection{Homogenization to Darcy equations}
    \label{sec.Homogenization.D}

Let $(u_D, p_D, f_D)$ be the triplet satisfying the Darcy equations in Proposition \ref{prop.CoD} 
 with $A$ now being the resistance matrix $\mathcal{R}$ defined in \eqref{resis.mat}.
\[
    f^\eps = f_D|_{\Omega^\eps}.
\]

Then we consider the Navier-Stokes equations in $\Omega^{\eps}$ with viscosity $\eps^\beta$ and forcing $f^\eps$ 
\begin{equation}\label{eq.NS_fc_D}
    \left\{
    \begin{array}{ll}
    \eps^{6-2\alpha-2\beta}
    (\partial_t u^\eps + u^\eps\cdot\nabla u^\eps)
    - \eps^{3-\alpha}\Delta u^\eps + \nabla p^\eps
    = f^\eps &\mbox{in}\ (0,1)\times \Omega^\eps,\\
    \nabla\cdot u^\eps = 0 &\mbox{in}\ (0,1)\times\Omega^\eps,\\
    u^\eps = 0 &\mbox{on}\ (0,1)\times\partial\Omega^\eps,\\
    u^\eps = u^\eps_0 &\mbox{on}\ \{0\}\times\Omega^\eps. 
    \end{array}
    \right.
\end{equation}

We omit the proof of the following proposition for Leray-Hopf weak solutions of \eqref{eq.NS_fc_D},  since it is completely parallel to \cite[Theorem 1.3]{Hof23} based on the estimates for the corrector in \cite[Section 2]{Hof23} along with the Poincaré inequality for $\varphi\in H^1_0(\Omega^\eps)$ in \cite[Lemma 3.4.1]{All90b} 
\begin{equation}\label{est.Poincare}
    \|\varphi\|_{L^2(\Omega^\eps)} \lesssim \eps^{\frac{3-\alpha}{2}} \|\nabla \varphi\|_{L^2(\Omega^\eps)}.
\end{equation}

\begin{proposition}\label{prop.RoC.D}
For $0<\eps<1$, we have 
\begin{equation}\label{est1.prop.RoC.D}
    \|u^\eps-u_D\|_{L^2(0,1;L^2(\Omega^\eps))}
    \lesssim 
    \eps^{3-\alpha-\beta} \|u^\eps_0\|_{L^2(\Omega^\eps)} 
    + \eps^{\mathfrak p_D}, 
\end{equation}
where the exponent $\mathfrak p_D$ is defined in \eqref{def.pD}.
\end{proposition}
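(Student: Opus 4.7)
The plan is to mimic the strategy of Proposition \ref{prop.RoC.E}, replacing the Euler limit by the Darcy limit and exploiting the Poincar\'e inequality \eqref{est.Poincare}, which is available only because $\Omega^\eps$ is fully perforated. I would define the modified limit profile $\check u^\eps := w^\eps u_D - \opbogeps(u_D)$, with $w^\eps$ the corrector of Section \ref{sec.corr} and $\opbogeps$ the operator of Lemma \ref{lem.Corr3}; by construction $\check u^\eps \in H^1_0(\Omega^\eps)$, $\nabla\cdot \check u^\eps = 0$, and $\check u^\eps(0,\cdot) = 0$. The decomposition $u^\eps - u_D = -({\rm Id}-w^\eps)u_D - \opbogeps(u_D) - v^\eps$, with $v^\eps := \check u^\eps - u^\eps$, reduces the task to controlling $\|v^\eps\|_{L^2(0,1;L^2(\Omega^\eps))}$, since the first two pieces are bounded by Lemmas \ref{lem.Corr1}--\ref{lem.Corr3}.

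Next I would derive the equation for $\check u^\eps$ by combining the Darcy equation $\mathcal R u_D + \nabla p_D = f_D$ with the key corrector identity $\eps^{3-\alpha}(-\Delta w^\eps + \nabla q^\eps) = M^\eps$ of Lemma \ref{lem.Corr2}(1): the combination $-\eps^{3-\alpha}\Delta(w^\eps u_D) + w^\eps \nabla p_D$ produces $M^\eps u_D + \nabla p_D$ plus controllable cross-terms, and Lemma \ref{lem.Corr2}(2) identifies $M^\eps u_D \approx \mathcal R u_D = f_D - \nabla p_D$ up to remainders in $\eta^\eps, \eps$ (now without the cutoff $\mathbbm 1_K$, since $K = \T^3$). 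Collecting every leftover into a remainder $g^\eps$ involving $({\rm Id}-w^\eps)\nabla p_D$, the scaled time/convection terms $\eps^{6-2\alpha-2\beta}(\partial_t - \cdots)(w^\eps u_D - \opbogeps(u_D))$, the dissipative cross-terms $\eps^{3-\alpha}(2\nabla w^\eps\cdot\nabla u_D + w^\eps\Delta u_D + (\nabla q^\eps)u_D)$, and the $M^\eps - \mathcal R$ discrepancy, I would conclude that $v^\eps$ solves a Navier--Stokes-type system with zero boundary data, initial datum $v^\eps(0) = -u^\eps_0$, and right-hand side of the form $G^\eps = (\text{nonlinear interactions}) - ({\rm Id}-w^\eps)f^\eps + g^\eps$.

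Then I would carry out the energy estimate. Combining the energy inequality \eqref{EnIneq} with the equation for $\check u^\eps$ tested against $u^\eps$ (and vice versa, as in \cite[Proof of Theorem 1.3]{Hof23}), for $t\in[0,1]$
\begin{equation*}
\frac{\eps^{6-2\alpha-2\beta}}{2}\|v^\eps(t)\|_{L^2(\Omega^\eps)}^2
+ \eps^{3-\alpha}\int_0^t \|\nabla v^\eps(s)\|_{L^2(\Omega^\eps)}^2\dd s
\le \frac{\eps^{6-2\alpha-2\beta}}{2}\|u^\eps_0\|_{L^2(\Omega^\eps)}^2 + |I_1(t)|+|I_2(t)|,
\end{equation*}
where $I_1$ gathers the trilinear convective interactions (exploiting the usual antisymmetry of $(u\cdot\nabla u,u)$) and $I_2 = \int_0^t\langle G^\eps - (\text{nonlinear}),\, v^\eps\rangle$. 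Each contribution to $g^\eps$ is bounded using Lemmas \ref{lem.Corr1}--\ref{lem.Corr3} and the smoothness of $u_D, p_D$; all occurrences of $\|v^\eps\|_{L^2}$ on the right are absorbed into the dissipation via the Poincar\'e inequality \eqref{est.Poincare}, which gives $\|v^\eps\|_{L^2}^2 \lesssim \eps^{3-\alpha}\|\nabla v^\eps\|_{L^2}^2$. Gr\"onwall's inequality, applied after this absorption, then converts $\eps^{3-\alpha}\|\nabla v^\eps\|_{L^2(0,t;L^2)}^2$ into a bound on $\|v^\eps\|_{L^2(0,t;L^2)}^2$, which is precisely the $L^2_t L^2_x$ norm appearing in \eqref{est1.prop.RoC.D}.

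The main obstacle is the bookkeeping of the competing $\eps$-powers. The left-hand side carries the two distinct prefactors $\eps^{6-2\alpha-2\beta}$ (time and convection) and $\eps^{3-\alpha}$ (dissipation), while the nonlinear term $\eps^{6-2\alpha-2\beta}\int (u^\eps\cdot\nabla v^\eps)\cdot \check u^\eps$ must be absorbed using \eqref{est.Poincare} without losing the gains already achieved on $g^\eps$. The final step is to choose the intermediate lengthscale $\eta^\eps\in[\eps^\alpha,\eps]$ optimally so that the errors balance to the four competing rates $(3-2\beta)/3$, $(\alpha-1)/2$, $3-\alpha$, $6-2\alpha-2\beta$ that define $\mathfrak p_D$ in \eqref{def.pD}; this is also where the correction indicated in Remark \ref{rem.thm.main.D}(\ref{item1.rem.thm.main.D}) (namely, replacing $\eps^{9-3\alpha}$ in \cite[(3.16)]{Hof23} by $\eps^{6-2\alpha}$) must be inserted to obtain the sharper exponent $3-\alpha$ rather than $(9-3\alpha)/2$.
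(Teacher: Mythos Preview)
Your proposal is correct and follows essentially the same approach as the paper, which simply omits the proof by referring to \cite[Theorem 1.3]{Hof23} together with the corrector estimates of Section~\ref{sec.corr} and the Poincar\'e inequality \eqref{est.Poincare}. One minor point: in the Darcy regime Gr\"onwall is not really what performs the final conversion---rather, the Poincar\'e inequality itself turns the retained dissipation $\eps^{3-\alpha}\int_0^t\|\nabla v^\eps\|_{L^2}^2$ directly into the desired $L^2_tL^2_x$ bound once the right-hand side has been absorbed.
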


\begin{remark}\label{rem.prop.RoC.D}
As noted in Remark \ref{rem.thm.main.D} (\ref{item1.rem.thm.main.D}), the rate of convergence in Proposition \ref{prop.RoC.D} is slightly different from the one in \cite[Theorem 1.3]{Hof23} due to a correction of \cite[(3.16)]{Hof23}. 
\end{remark}

    \subsection{Proof of Theorem \ref{thm.main.D}}
    \label{sec.Scaling-Stability.D}

In this section, we prove Theorem \ref{thm.main.D}. Define $(U^\eps, P^\eps, F^\eps)$ by the scaling \eqref{eq.UPF_D}. Then $(U^\eps, P^\eps, F^\eps)$ satisfies the Navier-Stokes equations in $\Omega^{\eps}$ with viscosity $1$ and forcing $F^\eps$ 
\begin{equation}
    \left\{
    \begin{array}{ll}
    \partial_t U^\eps - \Delta U^\eps + U^\eps\cdot\nabla U^\eps + \nabla P^\eps 
    = F^\eps &\mbox{in}\ (0,\eps^{\alpha+2\beta-3})\times\Omega^\eps,\\
    \nabla\cdot U^\eps = 0 &\mbox{in}\ (0,\eps^{\alpha+2\beta-3})\times\Omega^\eps,\\
    U^\eps = 0 &\mbox{on}\ (0,\eps^{\alpha+2\beta-3})\times\partial\Omega^\eps,\\
    U^\eps = U^\eps_0 &\mbox{on}\ \{0\}\times\Omega^\eps.
    \end{array}\right.
\end{equation}

Let $\phi^\eps, \Phi^\eps$ denote the flows on $\Omega^\eps$ for $u^\eps, U^\eps$, respectively, and let $\phi_D$ denote the flow on $\T^3$ for $u_D$. Since $u^\eps(t,x)=\eps^{\alpha+2\beta-3} U^\eps(\eps^{\alpha+2\beta-3} t,x)$ by \eqref{eq.UPF_D}, we have 
\begin{equation}\label{eq.FlowMaps_D}
    \phi^\eps(t,x) = \Phi^\eps(\eps^{\alpha+2\beta-3} t,x), 
    \quad
    t\in [0,1]. 
\end{equation}

We first prove the following lemmas for $\phi^\eps, \phi_D$. Recall that $\mathfrak p_D$ is defined in \eqref{def.pD}.

\begin{lemma}\label{lem.RoC.phiD}
For $0<\eps<1$, we have
\begin{equation}\label{est1.lem.RoC.phiD}
\begin{split}
    \|\phi^\eps(t) - \phi_D(t)\|_{L^2(\Omega^\eps)}
    \lesssim
    \eps^\beta \|U^\eps_0\|_{L^2(\Omega^\eps)} + \eps^{\mathfrak p_D},
    \quad
    t\in [0,1]. 
\end{split}
\end{equation}
\end{lemma}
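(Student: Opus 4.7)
The plan is to adapt the proof of Lemma \ref{lem.RoC.phiE} to the Darcy setting, with the only substantive difference being the need to handle an $L^2$-in-time (rather than $L^\infty$-in-time) convergence rate coming from Proposition \ref{prop.RoC.D}. First I would write
\[
\phi^\eps(t,x) - \phi_D(t,x) = \int_0^t \bigl(u^\eps(s,\phi^\eps(s,x)) - u_D(s,\phi^\eps(s,x))\bigr)\dd s + \int_0^t \bigl(u_D(s,\phi^\eps(s,x)) - u_D(s,\phi_D(s,x))\bigr)\dd s,
\]
and, using smoothness of $u_D$ (from Proposition \ref{prop.CoD}), bound the second integrand by $\|\nabla u_D(s)\|_{L^\infty(\T^3)}\,|\phi^\eps(s,x) - \phi_D(s,x)|$. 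Pointwise Grönwall gives, for a.e.\ $x\in\Omega^\eps$,
\[
|\phi^\eps(t,x) - \phi_D(t,x)| \le C\int_0^t e^{C(t-s)}\, |u^\eps(s,\phi^\eps(s,x)) - u_D(s,\phi^\eps(s,x))|\dd s.
\]

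Next I would take the $L^2(\Omega^\eps)$-norm in $x$ and apply Minkowski's inequality. The key point is that $\phi^\eps(s,\cdot)$ is a measure-preserving bijection of $\Omega^\eps$, as follows from $\nabla\cdot u^\eps = 0$ combined with the no-slip condition $u^\eps = 0$ on $\partial\Omega^\eps$ (so trajectories cannot leave $\Omega^\eps$). Hence, since $u_D$ is defined on $\T^3 \supset \Omega^\eps$, a change of variables yields
\[
\|u^\eps(s,\phi^\eps(s,\,\cdot\,)) - u_D(s,\phi^\eps(s,\,\cdot\,))\|_{L^2(\Omega^\eps)} = \|u^\eps(s) - u_D(s)\|_{L^2(\Omega^\eps)},
\]
and therefore
\[
\|\phi^\eps(t) - \phi_D(t)\|_{L^2(\Omega^\eps)} \lesssim \int_0^t \|u^\eps(s) - u_D(s)\|_{L^2(\Omega^\eps)}\dd s.
\]

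The main difference from the Euler case is that Proposition \ref{prop.RoC.D} only provides an $L^2_t L^2_x$ bound, not an $L^\infty_t L^2_x$ one; I handle this by Cauchy-Schwarz in time, which gives
\[
\int_0^t \|u^\eps(s) - u_D(s)\|_{L^2(\Omega^\eps)}\dd s \le t^{1/2}\, \|u^\eps - u_D\|_{L^2(0,1;L^2(\Omega^\eps))} \lesssim \eps^{3-\alpha-\beta}\|u^\eps_0\|_{L^2(\Omega^\eps)} + \eps^{\mathfrak p_D},
\]
for $t\in[0,1]$. Finally, the scaling \eqref{eq.UPF_D} yields $u^\eps_0 = \eps^{\alpha+2\beta-3} U^\eps_0$, so $\eps^{3-\alpha-\beta}\|u^\eps_0\|_{L^2(\Omega^\eps)} = \eps^\beta \|U^\eps_0\|_{L^2(\Omega^\eps)}$, which gives \eqref{est1.lem.RoC.phiD}. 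There is no serious obstacle; the only point requiring mild care is the Cauchy-Schwarz step in time and the verification that the measure-preservation argument (which underlies the change of variables) is valid despite the possibly non-unique Leray-Hopf solution, which is justified by the construction of the flow $\phi^\eps$ recalled in Appendix \ref{appx.Flow}.
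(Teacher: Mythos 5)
Your proposal is correct and follows essentially the same route as the paper: reproduce the Grönwall argument from Lemma \ref{lem.RoC.phiE} to get $\|\phi^\eps(t)-\phi_D(t)\|_{L^2(\Omega^\eps)}\lesssim\int_0^t\|u^\eps(s)-u_D(s)\|_{L^2(\Omega^\eps)}\dd s$, then apply Cauchy-Schwarz in time together with Proposition \ref{prop.RoC.D} and the scaling $u^\eps_0=\eps^{\alpha+2\beta-3}U^\eps_0$. You merely spell out the intermediate steps (measure-preservation, change of variables, exponent bookkeeping) that the paper compresses into a reference to Lemma \ref{lem.RoC.phiE}.
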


\begin{proof}
The same argument as in the proof of Lemma \ref{lem.RoC.phiE} leads to, for $t\in[0,1]$, 
\begin{equation*}
    \|\phi^\eps(t) - \phi_D(t)\|_{L^2(\Omega^\eps)} 
    \lesssim
    \int_0^t 
    \|u^\eps(s) - u_D(s)\|_{L^2(\Omega^\eps)}
    \dd s.
\end{equation*}
Hence, with 
\[
     u^\eps_0=\eps^{\alpha+2\beta-3} U^\eps_0, 
\]
the assertion follows from the Cauchy-Schwarz inequality and Proposition \ref{prop.RoC.D}.
\end{proof}

\begin{lemma}\label{lem.Markov.D}
For given $0<\eta<1$, take $\eps_0$ defined in \eqref{def.eps_D}. 
Then, for any $0 < \eps \le \eps_0$ of the form \eqref{def.eps'} and $U^\eps_0$ satisfying $\|U^\eps_0\|_{L^2(\Omega^\eps)} \le \eps^{\mathfrak p_D - \beta}$, we have
\[
    \begin{split}
    \mathcal{L}\big(
    \{x\in \Omega^\eps~|~|\phi^\eps(t,x) - \phi_D(t,x)| > \eta\}
    \big)
    \lesssim 
    \eta,
    \quad
    t\in [0,1].
    \end{split}
\]
\end{lemma}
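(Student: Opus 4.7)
The plan is to mimic the proof of Lemma \ref{lem.Markov.E} almost verbatim, substituting the Darcy-flow estimate of Lemma \ref{lem.RoC.phiD} for the Euler one, and then using the particular calibration of $\eps_0$ in \eqref{def.eps_D} to absorb the extra factor of $\eta^{-1}$ that Markov's inequality produces.

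More precisely, first I would apply Markov's inequality on $\Omega^\eps$ to obtain
\[
    \mathcal{L}\big(\{x\in\Omega^\eps~|~|\phi^\eps(t,x)-\phi_D(t,x)|>\eta\}\big)
    \le
    \eta^{-1}\|\phi^\eps(t)-\phi_D(t)\|_{L^1(\Omega^\eps)}.
\]
Next, since $\Omega^\eps\subset\T^3$ has uniformly bounded measure, Hölder's inequality gives $\|\phi^\eps(t)-\phi_D(t)\|_{L^1(\Omega^\eps)}\lesssim\|\phi^\eps(t)-\phi_D(t)\|_{L^2(\Omega^\eps)}$, and Lemma \ref{lem.RoC.phiD} then yields
\[
    \mathcal{L}\big(\{x\in\Omega^\eps~|~|\phi^\eps(t,x)-\phi_D(t,x)|>\eta\}\big)
    \lesssim
    \eta^{-1}\big(\eps^{\beta}\|U^\eps_0\|_{L^2(\Omega^\eps)}+\eps^{\mathfrak p_D}\big).
\]

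Using the hypothesis $\|U^\eps_0\|_{L^2(\Omega^\eps)}\le\eps^{\mathfrak p_D-\beta}$, the first term inside the parentheses is bounded by $\eps^{\mathfrak p_D}$, so the right-hand side is $\lesssim\eta^{-1}\eps^{\mathfrak p_D}$. Finally, the definition \eqref{def.eps_D} of $\eps_0$ ensures $\eps_0^{\mathfrak p_D}\le\eta^{2}$, hence for every $0<\eps\le\eps_0$ we get $\eta^{-1}\eps^{\mathfrak p_D}\le\eta^{-1}\eps_0^{\mathfrak p_D}\lesssim\eta$. This closes the estimate uniformly in $t\in[0,1]$.

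There is essentially no obstacle: the argument is structurally identical to Lemma \ref{lem.Markov.E} and the only place that requires any attention is verifying that the two distinct contributions from Lemma \ref{lem.RoC.phiD}, namely $\eps^{\beta}\|U^\eps_0\|_{L^2(\Omega^\eps)}$ and $\eps^{\mathfrak p_D}$, combine consistently under the prescribed size of the initial datum. This is precisely what motivates the bound $\|U^\eps_0\|_{L^2(\Omega^\eps)}\le\eps^{\mathfrak p_D-\beta}$ in \eqref{est1.thm.main.D} and the scaling choice of $\eps_0$ in \eqref{def.eps_D}; once both are fed into the Markov--Hölder estimate above, the conclusion follows.
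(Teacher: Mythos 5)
Your proof is correct and takes essentially the same route as the paper: Markov's inequality, Hölder to pass from $L^1$ to $L^2$, Lemma \ref{lem.RoC.phiD}, the hypothesis on $\|U^\eps_0\|_{L^2(\Omega^\eps)}$, and then the calibration $\eps_0^{\mathfrak p_D}\le\eta^2$ coming from \eqref{def.eps_D}. The paper simply records this as "the same argument as in the proof of Lemma \ref{lem.Markov.E}", which is exactly what you carried out.
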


\begin{proof}
The same argument as in the proof of Lemma \ref{lem.Markov.E} leads to, for $t\in[0,1]$,
\begin{equation*}
    \begin{split}
    &\mathcal{L}\big(\{x\in \Omega^\eps~|~|\phi^\eps(t,x) - \phi_D(t,x)|>\eta\}\big) \\
    &\lesssim
    \eta^{-1} 
    \big(
    \eps^\beta \|U^\eps_0\|_{L^2(\Omega^\eps)} + \eps^{\mathfrak p_D}
    \big)
    \lesssim
    \eta^{-1} \eps^{\mathfrak p_D}.
    \end{split}
\end{equation*}
The assertion follows from the definition of $\eps_0$.
\end{proof}

With the previous materials in hand, we can now conclude the proof of Theorem \ref{thm.main.D} following the same lines as in the proof of Theorem \ref{thm.main.E}, see Section \ref{sec.Scaling-Stability.E}.

    \appendix

    \section{Definitions from differential geometry}
    \label{appx.Defs}

In this appendix, we recall the definitions from differential geometry needed in this paper. We follow the presentation in \cite[Section 1.2]{GlaHor12} where the main reference is \cite{Hir76book}. Hereafter, unless otherwise stated, all the geometrical objects will be considered in the smooth category.

We say that $\gamma$ is a Jordan surface if $\gamma$ embedded in $\R^3$ is the image of the $2$-sphere $\S^2$ by a smooth embedding $h: \S^2 \rightarrow \R^3$. Thanks to the Jordan–Brouwer theorem \cite{Bro11,GuiPol74}, the complement $\R^3\setminus \gamma$ has two connected components, one of which is bounded in $\R^3$ and denoted by $\opint[\gamma]$. The unit outward normal vector field on $\opint[\gamma]$ is denoted by $\nu$.

Let $\Omega$ be a connected set of $\R^3$. Two Jordan surfaces $\gamma_0$ and $\gamma_1$ embedded in $\R^3$ are said to be isotopic in $\Omega$ if there exists a continuous mapping $I: [0,1]\times \S^2 \rightarrow \Omega$, called an isotopy, such that $I(0)=\gamma_0, I(1)=\gamma_1$ and that $I(t,\cdot)$ is a homeomorphism of $\S^2$ into its image for each $t\in[0,1]$. We say that $I$ is a smooth isotopy if the homeomorphism $I(t,\cdot)$ is a $C^\infty$-diffeomorphism with respect to the space variable for each $t\in[0,1]$. Finally, an one-parameter continuous family of diffeomorphism of $\Omega$ is called diffeotopy of $\Omega$.

These notions are naturally modified to their versions in the $3$-torus $\T^3$. Recall that $\T^3$ is equipped with a Riemannian metric $g$ for which the quotient mapping $\pi: \R^3 \to \T^3$ is a local isometry. Then a Jordan surface $\gamma$ in $\T^3$ can be defined by starting with a Jordan surface $\gamma$ embedded in $\R^3$ and contained in $[0,1)^3$. The meaning of $\opint[\gamma]$ for a Jordan surface $\gamma$ in $\T^3$ should be clear in this context. To avoid redundancy, we omit more details for the definitions of geometrical objects in $\T^3$ such as isotopicity of Jordan surfaces in $\T^3$.

    \section{Flows on a perforated domain}
    \label{appx.Flow}

Let $\Omega^\eps$ be a partially or fully perforated domain defined in Section \ref{sec.LC.E} or \ref{sec.LC.D}, respectively. Let $T > 0$ and a sufficiently regular vector field $u: [0,T]\times \Omega^\eps\rightarrow \R^3$ satisfy $\nabla\cdot u=0$ in the sense of distributions. Then, reformulating the definition in Glass-Horsin \cite[Section 1.2]{GlaHor12} for smooth and bounded domains $\Omega\subset\R^3$, we say that a mapping $\phi^u: [0,T]\times \Omega^\eps \rightarrow \R^3$ is a flow for $u$ if $\phi^u$ is a solution of the following integral equation for a.e. $x\in \Omega^\eps$: 
\begin{equation}\label{eq.integral}
    \phi(t,x) = x + \int_{0}^{t} u(s,\phi(s,x)) \dd s, \quad t\in [0,T]. 
\end{equation}
For our application, we are interested in the existence and uniqueness of flows $\phi^u$ for a Leray-Hopf weak solution $u$ of the Navier-Stokes equations with initial data in $L^2_\sigma(\Omega^\eps)$. 
Indeed, the existence of $\phi^u$ in the class $C_t L^2_x$ is provided by Foia\c{s}-Guillop\'e-Temam \cite[Theorem 2.1]{FGT85} using the a priori estimates of $u$ in \cite{FGT81}, and moreover, these flows $\phi^u$ can be assumed to be volume-preserving by \cite[Theorem 3.1]{FGT85}.

The uniqueness of $\phi^u$ on $\R^3$ in the class $C_t L^2_x$ in the sense of path-by-path is recently provided by Galeati \cite{Gal25} who revisits earlier works for more regular initial data by Robinson-Sadowski \cite{RobSad2009b,RobSad2009a}. The proof of \cite{Gal25} is done in the framework of regular Lagrangian flows \cite{DiPLio89,Amb04,CriDel08,BCD21,Inv23}. Note that the uniqueness by \cite{Gal25} is different from the uniqueness of regular Lagrangian flows, e.g., Crippa-De Lellis \cite[Corollary 3.7]{CriDel08}. The notion of the latter involves a global constraint due to the definition; see \cite{BCD21}. In our problems, the path-by-path uniqueness $\phi^u$ on $\Omega^\eps$ in the class $C_t L^2_x$ can be proved by following the ideas in \cite{BCD21,Gal25}. In fact, the proof is just a reproduction of \cite[Proof of Corollary 10.1]{BCD21} based on the asymmetric Lusin–Lipschitz estimate of the form \cite[Corollary 4.3]{Gal25} for Leray-Hopf weak solutions in $\Omega^\eps$. Therefore, we omit the details here to avoid repetition.

\subsection*{Acknowledgements}
The authors would like to thank Richard M. H\"{o}fer for helpful comments. 
MH was supported by JSPS KAKENHI Grant Numbers JP 25K17278 and 25K00915. 
JL was supported by National Natural Science Foundation of China under Grant No. 12301238. 
FS was supported by the project ANR-23-CE40-0014-01 BOURGEONS of the French National Research Agency (ANR).

    \addcontentsline{toc}{section}{References}
    \bibliography{Ref}
    \bibliographystyle{plain}

\end{document}